\newcommand\mypagesizel{
\textwidth= 6.5in
\textheight=9in
\voffset-.55in
\hoffset -0.75in
\marginparwidth=56pt
}
\newcommand{\codim}{\textup{codim}}
\newcommand{\N}{\textup{N}}
\renewcommand{\phi}{\varphi}
\newcommand{\into}{\hookrightarrow}
\newcommand{\map}{\dashrightarrow}
\renewcommand{\le}{\leqslant}
\renewcommand{\ge}{\geqslant}
\newcommand{\bQ}{\mathbb{Q}}
\newcommand{\cQ}{\mathcal{Q}}
\newcommand{\sD}{\mathscr{D}}
\newcommand{\sE}{\mathscr{E}}
\newcommand{\sF}{\mathscr{F}}
\newcommand{\sG}{\mathscr{G}}
\newcommand{\sH}{\mathscr{H}}
\newcommand{\sK}{\mathscr{K}}
\newcommand{\sL}{\mathscr{L}}
\newcommand{\sN}{\mathscr{N}}
\newcommand{\sO}{\mathscr{O}}
\newcommand{\sS}{\mathscr{S}}
\newcommand{\sT}{\mathscr{T}}
\newtheorem{thm}{Theorem}[section]
\newtheorem{question}[thm]{Question}
\newtheorem{lemma}[thm]{Lemma}
\newtheorem{cor}[thm]{Corollary}
\newtheorem{prop}[thm]{Proposition}
\newtheorem*{thm*}{Theorem}
\theoremstyle{definition}
\newtheorem{defn}[thm]{Definition}
\newtheorem{say}[thm]{}
\newtheorem{exmp}[thm]{Example}
\newtheorem{notation}[thm]{Notation}
\newtheorem{defn-thm}[thm]{Definition-Theorem} 
\newtheorem{defn-lemma}[thm]{Definition-Lemma}
\newtheorem{rem}[thm]{Remark}
\theoremstyle{remark}
\newtheorem*{not-and-def}{Notation and definitions}
\numberwithin{equation}{section}
\def\factor#1.#2.{\left. \raise 2pt\hbox{$#1$} \right/\hskip -2pt\raise -2pt\hbox{$#2$}}
\begin{document}

\title[On foliations with nef anti-canonical bundle]{On foliations with nef anti-canonical bundle}

\author{St\'ephane \textsc{Druel}}

\address{St\'ephane Druel: Institut Fourier, UMR 5582 du
  CNRS, Universit\'e Grenoble 1, BP 74, 38402 Saint Martin
  d'H\`eres, France} 

\email{druel@ujf-grenoble.fr}

\subjclass[2010]{37F75}

\begin{abstract}
In this paper we prove that the anti-canonical bundle of a holomorphic foliation $\sF$
on a complex projective manifold cannot be nef and big if either $\sF$ is regular, or $\sF$ has a compact leaf. Then we address codimension one regular foliations 
whose anti-canonical bundle is nef with maximal Kodaira dimension. 
\end{abstract}

\maketitle

{\small\tableofcontents}

\section{Introduction}

In the last few decades, much progress has been made in the classification of complex projective varieties.
The general viewpoint is that complex projective manifolds $X$ should be classified according to the behavior of their canonical class $K_X$.
Similar ideas can be applied in the context of \emph{foliations} on complex projective manifolds.
If $\sF\subset T_X$ is a foliation on a  complex projective manifold, we define its canonical class to be 
$K_{\sF}=-c_1(\sF)$. In analogy with the case of projective manifolds, one expects the numerical properties of $K_{\sF}$
to reflect geometric aspects of $\sF$ (see for instance \cite{druel04}, \cite{loray_pereira_touzet}, \cite{fano_fols}, \cite{codim_1_del_pezzo_fols}, \cite{fano_fols_2}).

In \cite{popa_schnell}, Popa and Schnell proved that the canonical bundle of a codimension one regular foliation 
with trivial normal bundle cannot be big.
In this paper we propose to investigate regular foliations on complex projective manifolds with $-K_\sF$ nef. Codimension one regular foliations with trivial canonical bundle were classified by Touzet in \cite{touzet}. More recently, Pereira and Touzet have investigated regular foliations $\sF$ of arbitrary 
rank on complex projective manifolds with $c_1(\sF)=0$ and $c_2(\sF)=0$.

In  \cite[Theorem 2]{miyaoka93}, Miyaoka proved that the anticanonical bundle of a smooth 
projective morphism $f:X\to C$ onto a smooth proper curve cannot be ample. 
In \cite[Proposition 1]{zhang} (see also \cite[Theorem 3.12]{debarre}),
this result was generalized by dropping the smoothness assumption on $f$.
In this note, we give a further generalization of this result to foliations (see also Theorem \ref{thm:main2}).

\begin{thm}\label{thm:main}
Let $X$ be a complex projective manifold, and let $\sF\subset T_X$ be a 
codimension $q$ foliation
with $0 < q < \dim X$. 
Suppose that either $\sF$ is regular, or that $\sF$ has a compact leaf.
Then $-K_\sF$ is not nef and big.
\end{thm}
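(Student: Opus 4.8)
The plan is to argue by contradiction: assume $-K_\sF$ is nef and big. The strategy has three movements — forcing algebraic integrability of $\sF$, passing to the associated fibration by leaves, and producing a transverse covering family of curves that violates nefness in the presence of bigness.

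First I would exploit the positivity of $-K_\sF$ to show that $\sF$ is algebraically integrable with rationally connected leaves. Since $-K_\sF$ is nef and big, it meets a general complete intersection curve $C = H_1 \cap \dots \cap H_{\dim X - 1}$ ($H_i$ very ample) positively, so $\deg(\det \sF|_C) = (-K_\sF) \cdot C > 0$; hence $\sF|_C$ is not seminegative. The Bogomolov--McQuillan / Campana--P\u{a}un circle of ideas then yields that through a general point of $X$ there passes a rational curve tangent to $\sF$, and that $\sF$ is algebraically integrable with rationally connected general leaf. It is here that the two hypotheses enter: when $\sF$ is regular it is a genuine subbundle with $\Sing \sF = \emptyset$, so the Harder--Narasimhan slopes of $\sF|_C$ are controlled and the delicate analysis along the singular locus disappears; when $\sF$ merely has a compact leaf $L$, that leaf already furnishes one algebraic leaf, and the positivity propagates it to a covering family.

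Next I would pass to the family of leaves. Let $\varphi \colon X \map Y$ be the rational fibration whose general fibre is the closure of a leaf, with $\dim Y = q \ge 1$ and general fibre $F$ rationally connected of dimension $\dim X - q$. After a resolution $\pi \colon \tilde X \to X$ with $\tilde\varphi \colon \tilde X \to Y$ a morphism, the foliation pulls back to the foliation $\tilde\sF$ tangent to the fibres of $\tilde\varphi$. The key comparison is $(-K_\sF)|_F = -K_F$: on a leaf the foliation restricts to the tangent bundle of the leaf, so the two agree up to an effective tangency correction, and this correction vanishes precisely because $\sF$ is regular, or because the chosen leaf is compact and smooth and so lies off $\Sing \sF$. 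Thus along the fibres $-K_\sF$ is exactly the fibrewise anti-canonical class, and it carries no positivity in the directions transverse to the leaves.

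Finally I would derive the contradiction in the spirit of Miyaoka's and Zhang's theorems for relative anti-canonical bundles. Because $\dim Y \ge 1$, I would produce a covering family of curves $\{\gamma_t\}$ transverse to $\sF$ — lifting a covering family of curves on $Y$ to suitable (multi)sections of $\tilde\varphi$ — along which $(-K_\sF) \cdot \gamma_t \le 0$, reflecting the absence of transverse positivity. Since $-K_\sF$ is nef this forces $(-K_\sF) \cdot \gamma_t = 0$ for a family of curves sweeping out $X$; but a nef and big class is strictly positive on every curve not contained in its augmented base locus, and that locus is a proper closed subset, so a general $\gamma_t$ meets its complement. This contradiction shows that $-K_\sF$ cannot be nef and big. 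The main obstacle is the construction and the degree estimate for the transverse family $\{\gamma_t\}$: one must control the comparison between $K_\sF$ and the relative canonical $K_{\tilde X/Y}$ across the $\tilde\varphi$-exceptional and non-reduced fibres, and it is exactly in keeping this correction effective and the slopes in check that the regularity of $\sF$, or the presence of a compact leaf, becomes indispensable.
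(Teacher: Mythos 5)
Your first movement contains the fatal gap. From $(-K_\sF)\cdot C>0$ on a general complete intersection curve $C$ you cannot conclude that $\sF$ is algebraically integrable: positive degree of $\det\sF$ only forces the Harder--Narasimhan filtration of $\sF_{|C}$ to have a nonzero positive-slope part, and the Bogomolov--McQuillan/KSCT/Campana--P\u{a}un circle of ideas then produces an algebraically integrable subfoliation corresponding to that positive part, which in general is a \emph{proper} subsheaf of $\sF$. Algebraicity of all of $\sF$ would require $\sF_{|C}$ ample (all slopes positive), which bigness of $-K_\sF$ does not give. Nor can regularity or a compact leaf rescue the slope argument in the way you invoke them, since those hypotheses are invisible to intersection numbers on general complete intersection curves. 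Concretely, on $\mathbb{P}^3$ the linear pull-back of a generic degree-one (linear) foliation on $\mathbb{P}^2$ has $-K_\sF\cong\mathcal{O}_{\mathbb{P}^3}(1)$ ample, yet is not algebraically integrable; it is singular, so it does not contradict the theorem, but it shows that any proof of ``$-K_\sF$ nef and big $\Rightarrow \sF$ algebraically integrable'' which uses the hypotheses only through restrictions to general curves must fail. Accordingly, the paper never proves (nor needs) algebraic integrability of $\sF$ itself: it passes to the algebraic part $\sH\subseteq\sF$, shows that $\sH$ still has a compact leaf (Lemma \ref{lemma:compact_leaf}), and proves the numerical identity $K_\sH\equiv K_\sF$ (Proposition \ref{proposition:regular_versus_algebraically_integrable_foliation}) by combining two opposite inequalities: Viehweg-type weak positivity (Proposition \ref{prop:pseudo_effective}) gives that $K_\sH-K_\sF$ is pseudo-effective, while the Harder--Narasimhan/Bogomolov--McQuillan argument is applied not to $\sF$ but to the \emph{purely transcendental} quotient foliation $\sG$ on the leaf space $Y$, yielding $\phi^*K_\sG\cdot A^{\dim X-1}\ge 0$. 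The transcendental directions are shown to contribute numerically zero, not to be absent.

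Your third movement is circular. The existence of a covering family of curves with $(-K_\sF)\cdot\gamma_t\le 0$ is precisely the assertion that $-K_\sF$ carries no positivity transverse to the leaves, which is essentially the theorem; you give no construction, and ``reflecting the absence of transverse positivity'' is not an argument. What replaces this in the paper (and already in Zhang's theorem, which is the model) is again weak positivity: for a $\mathbb{Q}$-Gorenstein algebraically integrable foliation whose anti-canonical class is ample plus an effective divisor $E$, the general log leaf $(F,\Delta+E_{|F})$ cannot be klt (Proposition \ref{proposition:thm_alg_int}). Under the hypotheses of the theorem the log leaf of $\sH$ satisfies $\Delta=0$ (part (3) of Proposition \ref{proposition:regular_versus_algebraically_integrable_foliation}) and $F$ is smooth, so after scaling $E$ the pair $(F,\varepsilon E_{|F})$ \emph{is} klt --- that contradiction is the engine of the proof (Theorem \ref{thm:main2}), and nothing in your outline supplies a substitute for it.
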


In \cite[Theorem 1.5]{codim_1_del_pezzo_fols}, the authors proved that the anti-canonical bundle of a codimension one foliation on a complex projective manifold whose singular set has codimension $\ge 3$ cannot be nef and big.

The following example show that the statement of Theorem \ref{thm:main} becomes wrong if one relaxes the assumption on $K_\sF$.

\begin{exmp}Fix an integer $m$ such that $m\ge 2$.
Let $A$ be an abelian variety with $\dim A \ge 1$, and let $\sL$ be an ample line bundle on $A$. Set 
$X:=\mathbb{P}_A(\sL\oplus\sL^{\otimes m})$
with natural morphism $\phi\colon X \to A$, and tautological line bundle $\sO_X(1)$. 
We have 
$$\sO_X(-K_{X/A})\cong \big(\sO_X(1)\otimes
\phi^*\sL^{\otimes -1}\big)\otimes\big(\sO_X(1)\otimes\phi^*\sL^{\otimes -m}\big).$$
Note that $h^0\big(X,\sO_X(1)\otimes\phi^*\sL^{\otimes -m}\big) = 1$, and that for any positive integer $k$, we have 
$$
\begin{array}{ccll}
h^0\big(X,\sO_X(k)\otimes\phi^*\sL^{\otimes -k}\big) & = & 
h^0\Big(A,\textup{S}^k\big(\sO_A\oplus\sL^{\otimes m-1}\big)\Big)
& \\
& = &  \sum_{0\le i\le k} h^0\big(A,\sL^{\otimes i(m-1)}\big)
& \text{ by the projection formula,}\\
& =  & \frac{1}{(n-1)!}\sum_{0\le i\le k}\big(i(m-1)\big)^{n-1}\sL^{n-1}
& \text{ by the G-R-R theorem,}\\
& \approx  & \frac{(m-1)^{n-1}\sL^{n-1}}{n(n-1)!}k^n  & \text{ by Faulhaber's formula.}\\
\end{array}
$$
Thus $\sO_X(1)\otimes\phi^*\sL^{\otimes -1}$ is big, and so is $-K_{X/A}$.
Let $\sG$ be a linear foliation on $A$ of codimension $0<q\le \dim A$, and let
$\sF$ be the pull-back of $\sG$ via $\varphi$ (see \ref{pullback_foliations}). Then 
$\sF$ is a regular foliation with $-K_\sF\sim -K_{X/A}$ big.
\end{exmp}

The next example shows that Theorem \ref{thm:main} is wrong if one drops the integrability assumption on
$\sF$.

\begin{exmp}
The null correlation bundle $\sN$ on $\mathbb{P}^{2n+1}$ (see \cite[4.2]{OSS}) yields a contact distribution
$\sD=\sN\otimes\sO_{\mathbb{P}^{2n+1}}(1)\subset T_{\mathbb{P}^{2n+1}}$
on $\mathbb{P}^{2n+1}$ corresponding to a twisted $1$-form
$\theta\in H^0\big(\mathbb{P}^{2n+1},\Omega_{\mathbb{P}^{2n+1}}^1\otimes\sO_{\mathbb{P}^{2n+1}}(2)\big)$.

Recall that a contact structure on a complex manifold $X$ is a corank 1 subbundle 
$\sD\subset T_X$ such that the bilinear form on $\sD$ with values in the quotient line bundle 
$\sL=T_X/\sD$
deduced from the Lie bracket on $T_X$
is everywhere non-degenerate. This implies
that the dimension of $X$ is odd, say $\dim X = 2n+1$, that the canonical
bundle $\sO_X(K_X)$
is isomorphic to $\sL^{\otimes n-1}$, and that $\det(\sD)\cong \sL^{\otimes n}$.
Alternatively, the contact structure can be described by the twisted $1$-form 
$\theta\in H^0(X,\Omega_X^1\otimes\sL)$ corresponding to the natural projection
$T_X \twoheadrightarrow \sL$. 
\end{exmp}

An immediate consequence of Theorem \ref{thm:main} is the following upper bound on the Kodaira dimension of the anti-canonical bundle of a holomorphic codimension 1 foliation.

\begin{cor}\label{cor:kodaira_codim1}
Let $X$ be a complex projective manifold, and let $\sF\subset T_X$ be a 
codimension $1$ foliation. 
Suppose that either $\sF$ is regular, or that $\sF$ has a compact leaf.
Suppose 
furthermore
that $-K_\sF$ is nef. Then $\kappa(X,-K_\sF) \le \dim X-1$.
\end{cor}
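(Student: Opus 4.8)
The plan is to deduce the statement directly from Theorem \ref{thm:main}, combined with the standard description of the Iitaka dimension of a nef line bundle. Throughout, write $n=\dim X$ and $L=-K_\sF$, regarded as a line bundle on $X$.

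First I would recall the elementary bound $\kappa(X,L)\le n$, valid for every line bundle $L$ on an $n$-dimensional projective manifold; this is built into the definition of the Iitaka dimension. Thus the only thing to rule out is the extremal value $\kappa(X,L)=n$. The substantive input is the well-known fact that for a \emph{nef} line bundle $L$ one has $\kappa(X,L)=n$ if and only if $L$ is big, equivalently if and only if $L^n>0$ (a standard fact; see for instance Lazarsfeld's \emph{Positivity in Algebraic Geometry}). I would state this as the key step.

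Granting it, the corollary follows by contradiction. Suppose $\kappa(X,-K_\sF)=n$. Since $-K_\sF$ is nef by hypothesis, it would then be big. On the other hand, $\sF$ is a codimension $q=1$ foliation, so $0<q<n$ once $n\ge 2$ (the case $n=1$ being trivial, since then $\sF=0$ and $-K_\sF\cong\cO_X$, whence $\kappa(X,-K_\sF)=0=n-1$), and $\sF$ is either regular or admits a compact leaf. Theorem \ref{thm:main} then asserts that $-K_\sF$ cannot be nef and big, a contradiction. Hence $\kappa(X,-K_\sF)\le n-1$.

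The argument is essentially formal, so there is no real geometric obstacle here: all the foliation-theoretic content is already packaged in Theorem \ref{thm:main}. The only point requiring care is to invoke the correct foundational equivalence ``$L$ nef and big $\iff L^n>0$'' for nef line bundles, and to dispose of the degenerate case $n=1$ separately.
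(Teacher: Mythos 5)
Your proof is correct, and like the paper's it is a purely formal consequence of Theorem \ref{thm:main}; the difference lies only in how the deduction is organized. The paper (via Corollary \ref{cor:kodaira_codim1_bis}) first deduces from Theorem \ref{thm:mainbis} the bound $\nu(-K_\sF)\le \dim X-1$ on the \emph{numerical} dimension, and then invokes Kawamata's inequality $\kappa(X,-K_\sF)\le\nu(-K_\sF)$ to conclude. You instead argue by contradiction directly on the Kodaira dimension: if $\kappa(X,-K_\sF)=\dim X$ then $-K_\sF$ is big --- note this is simply the \emph{definition} of bigness, so you do not actually need the characterization ``nef and big $\iff L^n>0$'' that you flag as the key step --- and since $-K_\sF$ is nef by hypothesis, this contradicts Theorem \ref{thm:main}. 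Your route is thus more elementary, avoiding Kawamata's result altogether; the paper's detour through $\nu(-K_\sF)$ buys the extra information that $-K_\sF$ is nef and \emph{abundant} whenever $\kappa(X,-K_\sF)=\dim X-1$, which is exactly what is needed immediately afterwards (in Theorem \ref{thm:main3} and Corollary \ref{cor:integrability}). Finally, your separate treatment of the degenerate case $\dim X=1$ (which is excluded from Theorem \ref{thm:main} by the requirement $q<\dim X$) is a point of care that the paper passes over silently.
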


Next, we investigate regular codimension one foliations with $-K_\sF$ is nef and 
$\kappa(X,-K_\sF) = \dim X-1$. Note that $-K_\sF$ is then nef and abundant.

\begin{thm}\label{thm:main3}
Let $X$ be a complex projective manifold, 
and let $\sF\subset T_X$ be a
codimension $q$ foliation
with $0 < q < \dim X$. 
Suppose that either $\sF$ is regular, or that $\sF$ has a compact leaf.
Suppose 
furthermore
that $-K_\sF $ is nef and abundant.
Then $\kappa(X,-K_\sF) \le \dim X-q$, and equality holds only if 
$\sF$ is algebraically integrable.
\end{thm}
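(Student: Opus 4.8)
The plan is to study the foliation through the Iitaka fibration of $-K_\sF$ and to show that the ``positive directions'' of $-K_\sF$ are realised \emph{inside} the leaves, so that the base of this fibration has dimension at most $\textup{rank}\,\sF=\dim X-q$. Since $-K_\sF$ is nef and abundant we have $\kappa(X,-K_\sF)=\nu(X,-K_\sF)=:k$, and by the structure theory of nef and abundant divisors there is an almost holomorphic map $f\colon X\map Y$ onto a normal projective variety $Y$ with $\dim Y=k$, such that $-K_\sF$ is numerically a pull-back of a big and nef class $A$ on $Y$ and $-K_\sF|_F\equiv 0$ on a general fibre $F$. I would work over the open locus where $f$ is a smooth morphism (so that no global resolution is needed on the general fibre), flagging that globally one must track the effect of a resolution on $\sF$; it is precisely the hypothesis that $\sF$ be regular or possess a compact leaf that keeps the relevant leaf geometry under control.

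First I would record the linear-algebra reduction. Over the locus where $f$ is smooth there is an exact sequence $0\lra T_{X/Y}\lra T_X\xrightarrow{\,df\,}f^*T_Y\lra 0$, and I set $\sF_0:=\sF\cap T_{X/Y}$ and $\sQ:=df(\sF)\subseteq f^*T_Y$. By construction $\textup{rank}\,\sQ\le\min(\dim Y,\textup{rank}\,\sF)$, so the desired bound $k\le\dim X-q$ follows at once as soon as one proves that $df|_\sF\colon\sF\to f^*T_Y$ is \emph{generically surjective}, i.e. $\textup{rank}\,\sQ=\dim Y$. Equivalently, the general fibre $F$ is not $\sF$-invariant and the leaves of $\sF$ dominate $Y$.

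The heart of the argument is therefore to prove this domination, and I expect it to be the main obstacle. The point is that the class $-K_\sF=\det\sF\equiv f^*A$ is big along $Y$, so its positivity cannot be absorbed by the fibre-tangent part $\sF_0$: on a general fibre one has $-K_\sF|_F\equiv0$, so $\sF_0$ contributes nothing and the whole ``$Y$-positivity'' must pass through the quotient $\sQ$. Making this rigorous --- ruling out that $\sF$ is tangent to the general fibre while $\det\sF$ is nevertheless a big pull-back --- is where the involutivity of $\sF$, the nefness (and not merely the numerical class) of $-K_\sF$, and Theorem \ref{thm:main} enter: a configuration with $\textup{rank}\,\sQ<\dim Y$ should produce, after restricting to a suitable subvariety transverse to the fibres, an induced foliation of smaller codimension whose anti-canonical class is nef and big, contradicting Theorem \ref{thm:main}.

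Finally, the equality case $k=\dim X-q=\textup{rank}\,\sF$ forces $df|_\sF$ to be a generic isomorphism, so $\sF_0=0$ and $\sF$ is generically transverse to $f$, each leaf mapping finitely onto $Y$. Then on a general leaf $L$ one has $-K_L=-K_\sF|_L\equiv(f|_L)^*A$, a pull-back of the big class $A$ under a generically finite map, hence $-K_L$ is big; thus $L$ is uniruled, and in fact carries a covering family of rational curves, tangent to $\sF$, along which $-K_\sF$ has positive degree. The algebraic-integrability criterion for foliations that are positive along a covering family of tangent curves (Bogomolov--McQuillan, Kebekus--Sol\'a Conde--Toma, Campana--P\u{a}un) then shows that the leaves are algebraic, i.e. $\sF$ is algebraically integrable, which completes the proof.
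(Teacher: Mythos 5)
Your reduction is sound as far as it goes: granting the structure map $f$ attached to the nef and abundant divisor $-K_\sF$, the bound $\kappa(X,-K_\sF)\le\dim X-q$ does follow once one knows that $df|_{\sF}\colon\sF\to f^*T_Y$ is generically surjective, and that statement is in fact true --- it is essentially what the paper proves. But your proposed proof of this key claim is not an argument, and I do not see how to make it one. You say that a configuration with $\textup{rank}\,\sQ<\dim Y$ ``should produce, after restricting to a suitable subvariety transverse to the fibres, an induced foliation of smaller codimension whose anti-canonical class is nef and big, contradicting Theorem \ref{thm:main}.'' Restricting a foliation to a subvariety $V$ that is not $\sF$-invariant does not produce a foliation on $V$; one can form $\sF\cap T_V$, but its canonical class differs from $K_\sF|_V$ by uncontrolled correction terms, so the nef and big class $(f|_V)^*A$ is not the anti-canonical class of any foliation you have constructed. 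If instead you take $V$ to be $\sF$-invariant, you are back to leaf closures, which is exactly what you are trying to control; and in either case Theorem \ref{thm:main} requires the new foliation to be regular or to have a compact leaf, which nothing in your construction provides. The real difficulty is to rule out that the \emph{purely transcendental} directions of $\sF$ carry the positivity of $-K_\sF$, and the paper does this not with Theorem \ref{thm:main} but with a different package: it splits $\sF$ into its algebraic part $\sH$ (induced by the family of leaves $\phi\colon X\map Y$) and a purely transcendental foliation $\sG$ on $Y$, proves $K_\sH\equiv K_\sF$ (Proposition \ref{proposition:regular_versus_algebraically_integrable_foliation}, via Harder--Narasimhan filtrations with respect to movable complete intersection classes, the Bogomolov--McQuillan criterion applied to a would-be destabilizing subsheaf of $\sG$, and Viehweg--Campana weak positivity), and then proves $\nu(-K_\sH)\le\nu(-K_F)$ for a general leaf closure $F$ of $\sH$ (Lemma \ref{lemma:nef_abundant_restriction_leaf_foliation}, via Kawamata's structure theory, Fujino's relative semiampleness and Ambro's canonical bundle formula). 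This is also where the hypothesis ``regular or compact leaf'' actually enters (through Lemma \ref{lemma:compact_leaf} and the vanishing of the boundary of the general log leaf); in your write-up it is invoked only rhetorically.

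Your equality case is moreover circular. You speak of $-K_L$ being big and of $L$ being uniruled for a general leaf $L$, but bigness and uniruledness only make sense once $L$ is (open in) an algebraic variety, which is precisely what is to be proved. The criteria you cite do not apply as stated either: Theorem \ref{thm:BM} requires the full restriction $\sF|_C$ to be an \emph{ample vector bundle} on a curve $C$, not merely $\det(\sF)\cdot C>0$, and obtaining such ampleness (or the positive minimal slope needed for a Campana--P\u{a}un type statement) requires exactly the Harder--Narasimhan analysis you have skipped. Note, by contrast, how cheap the equality case is in the paper's architecture: the chain of inequalities passes through $\nu(-K_F)\le\dim F=\textup{rank}\,\sH\le\textup{rank}\,\sF$, so equality forces $\textup{rank}\,\sH=\textup{rank}\,\sF$, hence $\sH=\sF$ (both being saturated in $T_X$), and algebraic integrability is immediate --- no rational curves are needed at all.
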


The following example shows to what extend Theorem \ref{thm:main3} is optimal.

\begin{exmp}
Let $T$ be a positive-dimensional
projective manifold with $-K_T$ nef, and let $A$ be a positive-dimensional abelian variety. Consider a linear foliation $\sG$ on $A$, and let 
$\sF$ be the pull-back of $\sG$ on $T\times A$.
Then $-K_\sF\sim -K_{T \times A /A}$ is nef and 
$\nu(-K_\sF)=\kappa(T\times A,-K_\sF)=\kappa(T,-K_T)=\nu(-K_T)$. Moreover, if $\sG$ is general enough, then $\sF$ has no algebraic leaf.
\end{exmp}

\begin{thm}\label{thm:main4}
Let $X$ be a complex projective manifold with $h^1(X,\sO_X)=0$, and let $\sF\subset T_X$ be a 
regular codimension $1$ foliation. 
Suppose 
that $-K_\sF$ is nef and $\kappa(X,-K_\sF) = \dim X-1$. Then $X\cong \mathbb{P}^1\times F$, and $\sF$ is induced by the natural morphism $X \cong \mathbb{P}^1\times F \to \mathbb{P}^1$.
\end{thm}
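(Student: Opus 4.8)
The plan is to reduce to the algebraically integrable case, produce a fibration onto $\mathbb{P}^1$, and then upgrade that fibration to a product by a rigidity argument.

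First I would observe that $-K_\sF$ is automatically abundant. Since $-K_\sF$ is nef, Theorem \ref{thm:main} shows it is not big, so $\nu(-K_\sF)\le \dim X-1$; combined with $\kappa(X,-K_\sF)=\dim X-1$ and the general inequality $\kappa\le\nu$ for nef bundles, this forces $\nu(-K_\sF)=\kappa(X,-K_\sF)=\dim X-1$. Hence the hypotheses of Theorem \ref{thm:main3} hold with $q=1$, and the equality $\kappa(X,-K_\sF)=\dim X-q$ yields that $\sF$ is algebraically integrable. The closures of the general leaves are then the fibres of a fibration $f\colon X\to B$ onto a smooth projective curve $B$ with $f_*\sO_X=\sO_B$. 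Since the edge map $f^*\colon H^1(B,\sO_B)\to H^1(X,\sO_X)$ is injective and $h^1(X,\sO_X)=0$, we get $h^1(B,\sO_B)=0$, i.e. $B\cong\mathbb{P}^1$.

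Next I would exploit regularity to control the fibres. Because $\sF$ is regular and its general leaves are the fibres of $f$, locally along a reduced fibre one may choose coordinates in which $\sF=\ker(dt)$ and $f=t^{m}$; hence every reduced fibre is smooth, the degenerations of $f$ consist only of multiple fibres $m_iL_i$, and a local computation of the conormal sheaf gives
\[
-K_\sF=-K_{X/B}+\sum_i(m_i-1)L_i .
\]
Restricting to a general fibre $F$ gives $-K_\sF|_F=-K_F$, so $-K_F$ is nef. The crucial point — and where $h^1(X,\sO_X)=0$ enters — is to show that $F$ is weak Fano, i.e. $-K_F$ is nef and big: analysing $R^1f_*\sO_X$ through the Leray sequence one first forces $h^1(F,\sO_F)=0$, and then $\kappa(X,-K_\sF)=\dim X-1$ together with abundance should push $\kappa(F,-K_F)$ up to $\dim F$. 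A weak Fano manifold is rationally connected, hence simply connected, so $\Pic(F)$ is torsion free; this rules out multiple fibres and shows that $f$ is a smooth morphism with $-K_\sF=-K_{X/B}$.

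It then remains to prove that a smooth family $f\colon X\to\mathbb{P}^1$ of weak Fano manifolds with $-K_{X/B}$ nef is a trivial product, and this is the main obstacle. Being merely a smooth, or even isotrivial, fibre bundle over $\mathbb{P}^1$ is not enough: the Hirzebruch surfaces $\mathbb{P}(\sO\oplus\sO(n))\to\mathbb{P}^1$ are smooth $\mathbb{P}^1$-bundles that are not products, and a direct computation shows $-K_{X/B}=2C_0+nF$, which is nef precisely when $n=0$. Thus nefness of the relative anti-canonical bundle is exactly the rigidity forcing triviality. I would establish it either through the positivity of the direct images $f_*\sO_X(-mK_{X/B})$ on $\mathbb{P}^1$ (showing they are trivial of the expected rank, which trivialises the anti-canonical model), or foliation-theoretically by producing the transverse one-dimensional foliation $\sG$ spanned by the $(-K_\sF)$-trivial curves so that $T_X=\sF\oplus\sG$; since the leaves of $\sF$ are compact and simply connected and $B\cong\mathbb{P}^1$ is simply connected, parallel transport along $\sG$ identifies all fibres and yields $X\cong\mathbb{P}^1\times F$. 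Finally the splitting shows that, under this isomorphism, $\sF$ is the foliation induced by the projection $X\cong\mathbb{P}^1\times F\to\mathbb{P}^1$, as claimed.
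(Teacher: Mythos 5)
Your skeleton up to the fibration $f\colon X\to B\cong\mathbb{P}^1$ (abundance via Theorem \ref{thm:main}, algebraic integrability via Theorem \ref{thm:main3}, genus $0$ from $h^1(X,\sO_X)=0$) agrees with the paper, but the two steps you leave as ``should work'' are precisely the hard points, and neither is proved. The first gap is the bigness of $-K_F$. You claim that $\kappa(X,-K_\sF)=\dim X-1$ together with abundance ``should push'' $\kappa(F,-K_F)$ up to $\dim F$, and that this is where $h^1(X,\sO_X)=0$ enters. Neither is correct. For a general nef and abundant divisor, numerical dimension can drop under restriction to a fibre: on $X=F\times\mathbb{P}^1$ take $D=\mathrm{pr}_F^*A'+\mathrm{pr}_{\mathbb{P}^1}^*\sO_{\mathbb{P}^1}(1)$ with $A'$ nef and abundant of numerical dimension $\dim F-1$; then $\nu(D)=\dim X-1$ but $\nu(D_{|F})=\dim F-1$. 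So the inequality $\nu(-K_\sF)\le\nu(-K_F)$ is special to anti-canonical divisors of fibrations; it is exactly the content of the paper's Lemma \ref{lemma:nef_abundant_restriction_leaf_foliation} and Corollary \ref{cor:codim1_abundant_max_kod_dim}, whose proof needs semistable reduction, Kawamata's structure theorem for nef and abundant divisors, Ambro's canonical bundle formula, Fujino's relative semi-ampleness theorem, and the negativity result \cite[Theorem 5.1]{fano_fols} --- and it uses $h^1(X,\sO_X)=0$ nowhere. Your Leray argument is also incomplete: the exact sequence only gives $H^0\big(B,R^1f_*\sO_X\big)=0$, which does not force $R^1f_*\sO_X=0$ (the sheaf could be a bundle of negative degree), so even $h^1(F,\sO_F)=0$ is not established this way. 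Relatedly, your exclusion of multiple fibres via ``$\Pic(F)$ torsion free'' argues with the \emph{general} fibre, whereas the torsion classes live on the \emph{reduced special} fibres $L_i$ (their normal bundles satisfy $\sO_{L_i}(L_i)^{\otimes m_i}\cong\sO_{L_i}$); the paper instead shows every reduced fibre $F_0$ is weak Fano by the intersection computation $K_{F_0}^{\dim X-1}=\tfrac1m K_F^{\dim X-1}>0$, deduces $\pi_1(F_0)=\{1\}$ from \cite{zhang_rcc}, and applies the holomorphic Reeb stability theorem (Lemma \ref{lemma:smooth_morphism}).

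The second gap is the product decomposition itself, which you rightly call the main obstacle but do not close. Positivity or triviality of $f_*\sO_X(-mK_{X/\mathbb{P}^1})$ is not a known consequence of $-K_{X/\mathbb{P}^1}$ being nef (direct image theorems concern \emph{relative canonical}, not anti-canonical, sheaves), and even granting it, triviality of these sheaves would only control the relative anti-canonical model, which differs from $X$ when the fibres are merely weak Fano. Your alternative --- a transverse rank-one foliation $\sG$ spanned by $(-K_\sF)$-trivial curves with $T_X=\sF\oplus\sG$, then parallel transport --- presupposes the existence, integrability and everywhere-transversality of such a family of curves, none of which is established; producing it is essentially equivalent to the conclusion. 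The paper's actual endgame is different and concrete: since $\phi$ is smooth and $-K_F$ is big, $(-K_X)^{\dim X}=2\dim X\,(-K_F)^{\dim F}>0$, so $X$ is weak Fano; choosing an extremal ray $R=\mathbb{R}_{\ge0}[\ell]$ with $F\cdot\ell>0$, its contraction $\psi_R$ has fibres of dimension $\le 1$, so by Wi\'sniewski's theorem \cite[Theorem 1.2]{wisn_crelle} it is either a blow-up along a smooth codimension-two centre (excluded because $-K_X\cdot\ell\ge 2F\cdot\ell\ge 2$) or a conic bundle, necessarily smooth with $F\cdot\ell=1$; then $\phi\times\psi_R\colon X\to\mathbb{P}^1\times Z$ is finite and birational, hence an isomorphism. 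Without this argument, or a completed version of one of your two sketches, the proposal does not prove the theorem.
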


The proof of the main results rely on the following observation (see also 
Proposition \ref{proposition:regular_versus_algebraically_integrable_foliation}).

\begin{prop}\label{proposition:regular_versus_algebraically_integrable_foliation_intro}
Let $X$ be a complex projective manifold, and let $\sF\subset T_X$ be a foliation. 
Suppose that 
$-K_\sF$ is nef.
Suppose furthermore that either $\sF$ is regular, or that $\sF$ has a compact leaf.
There exist a foliation $\sH$ on $X$ induced by
an almost proper map $\phi\colon X \dashrightarrow Y$, and a foliation $\sG$ on $Y$ such that 
\begin{enumerate}
\item there is no positive-dimensional algebraic subvariety passing through a general point of $Y$ that is tangent to $\sG$,
\item $\sF=\phi^{-1}\sG$, and
\item $K_\sH\equiv K_\sF$.
\end{enumerate}
\end{prop}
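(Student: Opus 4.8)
The plan is to take $\sH$ to be the \emph{algebraic part} of $\sF$, that is, the subfoliation whose leaves through a general point are the maximal algebraic subvarieties tangent to $\sF$. By the general theory of algebraically integrable foliations (and as will be made precise in Proposition \ref{proposition:regular_versus_algebraically_integrable_foliation}), such an $\sH$ exists, is algebraically integrable, and is induced by an almost proper map $\phi\colon X\map Y$; I would take $Y$ to be a suitable model of the base of this map. Since $\sH\subseteq\sF$ and $\sH$ is the relative foliation of $\phi$, the foliation $\sF$ descends to a foliation $\sG$ on $Y$ with $\sF=\phi^{-1}\sG$, which is assertion (2). Assertion (1) is then immediate from maximality: a positive-dimensional algebraic subvariety through a general point of $Y$ tangent to $\sG$ would pull back to an algebraic subvariety tangent to $\sF$ strictly containing the general leaf of $\sH$, a contradiction.

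For the numerical statement (3) I would first record the relation coming from the saturated inclusion $\sH\subseteq\sF$. From the exact sequence $0\to\sH\to\sF\to\sF/\sH\to 0$ and the generic identification $\sF/\sH\cong\phi^*\sG$ one gets, after passing to a resolution of $\phi$ and taking first Chern classes,
\[
K_\sF\equiv K_\sH+\phi^*K_\sG .
\]
Thus (3) is equivalent to the assertion that $K_\sG\equiv 0$, and I would prove this by showing that both $K_\sG$ and $-K_\sG$ are pseudoeffective.

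That $K_\sG$ is pseudoeffective would come from assertion (1): a foliation whose canonical class is not pseudoeffective is uniruled, hence has algebraic (indeed rationally connected) leaves through a general point. Since $\sG$ has no positive-dimensional algebraic subvariety through a general point tangent to it, $K_\sG$ must be pseudoeffective. Equivalently, testing against a general complete intersection curve on $Y$ and using the Bogomolov--McQuillan--Miyaoka algebraicity criterion, the quotient $(\sF/\sH)\vert_C$ has non-positive degree along a general complete intersection curve $C$ on $X$, i.e. $\phi^*K_\sG\cdot C\ge 0$.

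The reverse inequality, namely that $-K_\sG$ is pseudoeffective, is where the hypotheses on $-K_\sF$ and on $\sF$ enter, and this is the step I expect to be the main obstacle. Restricting $-K_\sF$ to a general fiber $F$ of $\phi$ (a general leaf of $\sH$), the relation above gives that $-K_F\equiv(-K_\sF)\vert_F$ is nef, so the general leaf of $\sH$ has nef anti-canonical bundle. The difficulty is to promote this fibrewise positivity, together with the nefness of $-K_\sF$ along multisections, into the pseudoeffectivity of $-K_\sG$ on $Y$; this is precisely where I would use that $\sF$ is regular or has a compact leaf, in order to control the contribution of $K_\sH$ (equivalently of the relative canonical class of $\phi$) along curves dominating $Y$. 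Granting that $-K_\sG$ is pseudoeffective, the two-sided pseudoeffectivity of $K_\sG$ forces $K_\sG\equiv 0$, whence $\phi^*K_\sG\equiv 0$ and $K_\sH\equiv K_\sF$, which is assertion (3).
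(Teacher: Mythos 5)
Your skeleton coincides with the paper's: take $\sH$ to be the algebraic part of $\sF$, induced by an almost proper map $\phi\colon X\dashrightarrow Y$; prove that $\phi^*K_\sG$ is non-negative against complete intersection curves using the transcendence of $\sG$ (this is the paper's Step 3, where the real content is Proposition \ref{proposition:HN_on_the_base}: the maximally destabilizing subsheaf of the pullback of $\sG$ to the family of leaves, restricted to a general complete intersection curve, must be shown to descend to a saturated subsheaf $\sE\subseteq\sG$ on $Y$ before \cite[Proposition 30]{kebekus_solaconde_toma07} and Theorem \ref{thm:BM} can be applied --- a descent you compress into one sentence); and prove a pseudoeffectivity statement in the opposite direction using the hypotheses. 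The problem is that this last step, the only place where the nefness of $-K_\sF$ and the regular/compact-leaf hypothesis are used, is exactly the one you do not prove: you write ``Granting that $-K_\sG$ is pseudoeffective''. So the proposal is missing its crux. For the record, the paper proves this ingredient in the form ``$K_\sH-K_\sF$ is pseudoeffective'', and the mechanism is not a promotion of the fibrewise nefness of $-K_F$, but Viehweg--Campana weak positivity applied to the family of leaves of $\sH$ (Proposition \ref{prop:pseudo_effective}): since $\sH$ has a compact leaf, its general log leaf is $(F,0)$ and ${K_\sF}_{|F}\sim{K_\sH}_{|F}\sim K_F$; choosing an effective $A_\varepsilon\sim_\bQ -K_\sF+\varepsilon A$ (with $A$ ample on $X$) such that $(X,A_\varepsilon)$ is log canonical, the class $K_F+{A_\varepsilon}_{|F}$ is ample, hence of non-negative Kodaira dimension, and weak positivity of the relevant direct image sheaf gives that $K_\sH+A_\varepsilon\equiv K_\sH-K_\sF+\varepsilon A$ is pseudoeffective for every $\varepsilon>0$. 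Combining with Step 3 yields $(K_\sH-K_\sF)\cdot A^{\dim X-1}=0$, and \cite[Proposition 6.5]{greb_toma} then gives $K_\sH\equiv K_\sF$.

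There are two further gaps in the reduction itself. First, almost properness of $\phi$ is not ``general theory'': for the foliation of $\mathbb{P}^2$ by the pencil of lines through a point, the algebraic part is the foliation itself and the inducing map $\mathbb{P}^2\dashrightarrow\mathbb{P}^1$ is not almost proper. Almost properness is precisely what Lemma \ref{lemma:compact_leaf} extracts from the hypothesis that $\sF$ is regular or has a compact leaf, so the hypothesis is already needed at this point, contrary to your setup in which it only enters later. Second, by \ref{pullback_foliations} the correct numerical relation is $K_\sF=K_\sH+\phi^*K_\sG+R$ with $R$ an effective divisor supported on the critical locus, so even granting both pseudoeffectivity statements, $K_\sG\equiv 0$ only gives $K_\sF\equiv K_\sH+R$ and you would still have to kill $R$; moreover $K_\sG$ need not be $\bQ$-Cartier on the normal variety $Y$, so ``$\pm K_\sG$ pseudoeffective'' requires interpretation. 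The paper's two-sided estimate is run directly on the class $K_\sH-K_\sF=-(\phi^*K_\sG+R)$: it is pseudoeffective by Step 2 and has non-positive degree against $A^{\dim X-1}$ by Step 3, hence is numerically trivial, which kills $\phi^*K_\sG$ and $R$ simultaneously and avoids both difficulties.
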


The paper is organized as follows. In section 2, we review basic definitions and results about holomorphic foliations. In section 3, we extend a number of known results on slope-semistable sheaves from the classical case to the setting where polarisations are given by movable complete intersection curve classes.
As application, we obtain a generalization of Metha-Ramananthan's theorem.
In sections 4 and 5, we study the anti-canonical divisor of algebraically integrable foliations, and provide applications to the study of singularities of those foliations with $-K_\sF$ nef.
Section 6 is devoted to the proof of Proposition \ref{proposition:regular_versus_algebraically_integrable_foliation_intro}.
Section 7 is devoted to the proofs of Theorem \ref{thm:main}
and Corollary \ref{cor:kodaira_codim1}. In section 8 we prove Theorems \ref{thm:main3}
and \ref{thm:main4}.

\medskip

We work over the field ${\mathbb C}$ of complex numbers.

\medskip

\noindent {\bf Acknowledgements.} We would like to thank Beno\^{i}t \textsc{Claudon} for helpful discussions.

\section{Recollection: Foliations}

In this section we recall the basic facts concerning foliations.

\subsection{Foliations}

\begin{defn}
A \emph{foliation} on  a normal  variety $X$ is a coherent subsheaf $\sF\subset T_X$ such that
\begin{itemize}
	\item $\sF$ is closed under the Lie bracket, and
	\item $\sF$ is saturated in $T_X$. In other words, the quotient $T_X/\sF$ is torsion-free.
\end{itemize}

The \emph{rank} $r$ of $\sF$ is the generic rank of $\sF$.
The \emph{codimension} of $\sF$ is defined as $q:=\dim X-r$. 

Let $X^\circ \subset X_{\textup{ns}}$ be the maximal open set where $\sF_{|X_{\textup{ns}}}$ is a subbundle of $T_{X_{\textup{ns}}}$. 
A \emph{leaf} of $\sF$ is a connected, locally closed holomorphic submanifold $L \subset X^\circ$ such that
$T_L=\sF_{|L}$. A leaf is called \emph{algebraic} if it is open in its Zariski closure.
\end{defn}

\begin{defn}
Let $X$ be a complex manifold, and let $\sF$ be a foliation on $X$. 
We say that
$\sF$ is \emph{regular} if $\sF$ is a subbundle of $T_X$.
\end{defn}

Next, we define the \emph{algebraic} and \emph{transcendental} parts of
a holomorphic foliation (see \cite[Definition 2]{codim_1_del_pezzo_fols}).

\begin{defn}
Let $\sF$ be a holomorphic foliation on a normal variety $X$.
There exist  a normal variety $Y$, unique up to birational equivalence,  a dominant rational map with connected fibers $\varphi:X\map Y$,
and a holomorphic foliation $\sG$ on $Y$ such that the following holds (see \cite[Section 2.4]{loray_pereira_touzet}).
\begin{enumerate}
	\item $\sG$ is purely transcendental, i.e., there is no positive-dimensional algebraic subvariety through a general point of $Y$ that is tangent to $\sG$; and
	\item $\sF$ is the pullback of $\sG$ via $\varphi$ (see \ref{pullback_foliations} for this notion).
\end{enumerate}
The foliation on $X$ induced by $\varphi$ is called the  \emph{algebraic part} of $\sF$.
\end{defn}

\begin{say}[Foliations defined by $q$-forms] \label{q-forms}

Let $\sF$ be a codimension $q$ foliation on an $n$-dimensional complex manifold $X$. Suppose that $q \ge 1$.
The \emph{normal sheaf} of $\sF$ is $\sN:=(T_X/\sF)^{**}$.
The $q$-th wedge product of the inclusion
$\sN^*\into \Omega^1_X$ gives rise to a nonzero global section 
 $\omega\in H^0\big(X,\Omega^{q}_X\otimes \det(\sN)\big)$
whose zero locus has codimension at least $2$ in $X$. 
Such $\omega$ is \emph{locally decomposable} and \emph{integrable}.
To say that $\omega$ is locally decomposable means that, 
in a neighborhood of a general point of $X$, $\omega$ decomposes as the wedge product of $q$ local $1$-forms 
$\omega=\omega_1\wedge\cdots\wedge\omega_q$.
To say that it is integrable means that for this local decomposition one has 
$d\omega_i\wedge \omega=0$ for every $i\in\{1,\ldots,q\}$. 
The integrability condition for $\omega$ is equivalent to the condition that $\sF$ 
is closed under the Lie bracket.

Conversely, let $\sL$ be a line bundle on $X$, $q\ge 1$, and 
$\omega\in H^0(X,\Omega^{q}_X\otimes \sL)$ a global section
whose zero locus has codimension at least $2$ in $X$.
Suppose that $\omega$  is locally decomposable and integrable.
Then  one defines 
a foliation of rank $r=n-q$ on $X$ as the kernel
of the morphism $T_X \to \Omega^{q-1}_X \otimes \sL$ given by the contraction with $\omega$. 
These constructions are inverse of each other. 
 \end{say}

We will use the following notation.

\begin{notation}
Let $\phi \colon X \to Y$ be a dominant morphism of normal varieties. 
Assume either that $K_Y$ is $\mathbb{Q}$-Cartier or that $\phi$ is equidimensional.
Write
$K_{X/Y}:=K_X-\phi^*K_Y$. We refer to it as the \emph{relative canonical divisor of $X$ over $Y$}.
\end{notation}

\begin{rem}
Let $\phi\colon X \to Y$ be an equidimensional morphism of normal varieties, and let $D$ be a Weil $\mathbb{Q}$-divisor on $Y$. The pull-back $\phi^*D$ of $D$ is defined as follows. We define 
$\phi^*D$ to be the unique $\mathbb{Q}$-divisor on $X$ whose restriction to 
$\phi^{-1}(Y_{\textup{ns}})$ is $(\phi_{|\textup{ns}})^*D_{|\textup{ns}}$. This construction agrees with the usual pull-back if $D$ itself is $\mathbb{Q}$-Cartier.
\end{rem}

\begin{notation}
Let $\phi \colon X \to Y$ be a dominant morphism of normal varieties. 
Let $Y^\circ\subset Y$ be a dense open subset with $\codim\, Y\setminus Y^\circ \ge 2$ such that 
$\phi$ restricts to an equidimensional morphism
$\varphi^\circ:X^\circ\to Y^\circ$,
where $X^\circ:=\phi^{-1}(Y^\circ)$.
Set $$R(\phi^\circ)=\sum_{D^\circ} \Big((\phi^\circ)^*D^\circ-{\big((\phi^\circ)^*D^\circ\big)}_{\textup{red}}\Big)$$
where $D^\circ$ runs through all prime divisors on $Y^\circ$, and let $R(\phi)$ denotes the Zariski closure of $R(\phi^\circ)$ in $X$. We refer to it
as the \emph{ramification divisor of $\phi$}.
\end{notation}

\begin{defn}
Let $\sF$ be a foliation on a normal projective variety $X$.
The \textit{canonical class} $K_{\sF}$ of $\sF$ is any Weil divisor on $X$ such that  $\sO_X(-K_{\sF})\cong \det(\sF)$. 

We say that $\sF$ is \emph{$\mathbb{Q}$-Gorenstein} if $K_\sF$ is a $\mathbb{Q}$-Cartier divisor.

\end{defn}

\begin{say}[Foliations described as pull-backs] \label{pullback_foliations}

Let $X$ and $Y$ be normal varieties, and let $\varphi\colon X\map Y$ be a dominant rational map that restricts to a morphism $\varphi^\circ\colon X^\circ\to Y^\circ$,
where $X^\circ\subset X$ and  $Y^\circ\subset Y$ are smooth open subsets.

Let $\sG$ be a codimension $q$ foliation on $Y$ 
with $q \ge 1$. Suppose that the restriction $\sG^\circ$ of $\sG$ to $Y^\circ$ is
defined by a twisted $q$-form
$\omega_{Y^\circ}\in H^0\big(Y^\circ,\Omega^{q}_{Y^\circ}\otimes \det(\sN_{\sG^\circ})\big)$.
Then $\omega_{Y^\circ}$ induces a nonzero twisted $q$-form 
$\omega_{X^\circ}\in 
H^0\Big(X^\circ,\Omega^{q}_{X^\circ}\otimes (\varphi^\circ)^*\big(\det(\sN_\sG)_{|Y^\circ}\big)\Big)$, which in turn defines a codimension $q$ foliation $\sF^\circ$ on $X^\circ$. We say that
the saturation $\sF$ of $\sF^\circ$ in $T_X$
\emph{is the pull-back of $\sG$ via $\varphi$},
and write $\sF=\varphi^{-1}\sG$.

Suppose that $X^\circ$ is such that $\varphi^\circ$ is an equidimensional morphism. Let 
$(B_i)_{i\in I}$
be the (possibly empty) set of hypersurfaces in $Y^\circ$ contained in the set of critical values of $\varphi^\circ$ and invariant by $\sG$. A straightforward computation shows that 
\begin{equation}\label{pullback_fol}
\det\big(\sN_{\sF^\circ}\big) \ \cong \ (\varphi^\circ)^*\det\big(\sN_{\sG_{|Y^\circ}}\big)
\otimes\sO_{X^\circ}\Big(\sum_{i\in I}\big((\varphi^\circ)^*B_i\big)_{\textup{red}}-(\varphi^\circ)^*B_i\Big).
\end{equation}
In particular, if
$\sF$ is induced by $\phi$, then \eqref{pullback_fol} gives
\begin{equation}\label{morphism_fol}
K_{\sF^\circ}=K_{X^\circ/Y^\circ}-R(\phi^\circ),
\end{equation}
where $R(\phi^\circ)$ denotes the ramification divisor of $\phi^\circ$.

Conversely, let $\sF$ be a foliation on $X$, and suppose that the general fiber of $\varphi$ is tangent to $\sF$. This means that, for a general point $x$ on a general fiber $F$ of $\varphi$,
the linear subspace $\sF_x\subset T_xX$ determined by the inclusion $\sF\subset T_X$ 
contains $T_xF$. Suppose moreover that $\varphi^\circ$ is smooth 
with connected fibers. Then, by \cite[Lemma 6.7]{fano_fols}, there is a  holomorphic foliation $\sG$ on $Y$
such that $\sF=\varphi^{-1}\sG$. 
\end{say}

\subsection{Algebraically integrable foliations}

\begin{defn}
Let $X$ be normal variety. A foliation $\sF$ on $X$ is said to be
\emph{algebraically integrable} if  the leaf of $\sF$ through a general point of $X$ is an algebraic variety. 
\end{defn}

\begin{defn}\label{log_leaf} 
Let $\sF$ be an algebraically integrable 
$\mathbb{Q}$-Gorenstein
foliation on a normal projective variety $X$.
Let $i:\ F\to  X$ be the normalization of the closure of a general leaf of $\sF$. 
There is an effective $\mathbb{Q}$-divisor
$\Delta$ on $ F$ such that
$K_{ F}  +  \Delta \sim i^*K_{\sF} $ (\cite[Definition 3.6]{fano_fols_2}). 
The pair $(F,\Delta)$ is called a \emph{general log leaf} of $\sF$.  
\end{defn}
The case when $(F,\Delta)$ is \emph{log canonical} is specially interesting (see \cite{fano_fols} and \cite{fano_fols_2}).
We refer to  \cite[section 2.3]{kollar_mori} for the definition of klt and log canonical pairs.
Here we only remark that if $F$ is smooth and $\Delta$ is a reduced simple normal crossing divisor, then 
$(F,\Delta)$ is log canonical.

The same argument used in the proof of 
\cite[Proposition 3.13]{fano_fols_2} shows that the following holds. We leave the details to the reader. In particular, 
Proposition \ref{proposition:lc_center} below says that an algebraically integrable $\mathbb{Q}$-Gorenstein foliation 
with mild singularities and big anti-canonical divisor has a very special property: there is a common point contained in the closure of a general leaf.

\begin{prop}\label{proposition:lc_center}
Let $X$ be a normal projective variety, let $\sF$ be a $\mathbb{Q}$-Gorenstein algebraically integrable 
foliation on $X$, and let $(F,\Delta)$ be its general log leaf. 
Suppose that $-K_\sF=A+E$ where 
$A$ is a $\mathbb{Q}$-ample $\mathbb{Q}$-divisor and
$E$ is an effective $\mathbb{Q}$-divisor. Suppose furthermore that
$(F,\Delta+E_{|F})$ is log canonical.
Then there is a closed irreducible subset $T\subset X$
satisfying the following property. For a general log leaf $(F,\Delta)$, there exists a log canonical center S of $(F,\Delta+E_{|F})$
whose image in $X$ is $T$.
\end{prop}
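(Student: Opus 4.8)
My plan is to transcribe the proof of \cite[Proposition 3.13]{fano_fols_2} into the present situation, the only new feature being that $-K_\sF$ is assumed big, through the decomposition $-K_\sF=A+E$, rather than ample. The construction of $T$ runs as follows: the general log leaves carry minimal log canonical centres, these assemble into a family over the space parametrising the leaves, and $T$ is defined as the image of that family under the evaluation map to $X$. The whole difficulty is to show that this image is a single irreducible subvariety, the same for every general leaf.

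First I would set up the family of leaves. Since $\sF$ is algebraically integrable and $\mathbb{Q}$-Gorenstein, there are a normal variety $Y$ of dimension $\codim \sF$, a family $\pi\colon U\to Y$ whose general fibre is the normalisation $i\colon F\to X$ of the closure of a general leaf, and an evaluation morphism $e\colon U\to X$; because a unique leaf passes through a general point of $X$, the map $e$ is birational. The divisor $\Delta$ is the restriction to the general fibre of a divisor $\Delta_U$ on $U$ with $K_{U/Y}+\Delta_U\sim_{\mathbb{Q}}e^*K_\sF$ over a dense open subset of $Y$. On the general log leaf one has $K_F+\Delta+E_{|F}\sim_{\mathbb{Q}}i^*(K_\sF+E)=-i^*A$, and since $i$ is finite and $A$ is $\mathbb{Q}$-ample, $i^*A$ is ample; thus $(F,\Delta+E_{|F})$ is log canonical with $-(K_F+\Delta+E_{|F})$ ample.

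To obtain a log canonical centre I first ensure the pair is not klt: since $A$ is ample, $-K_\sF$ is big, and I may slide the decomposition $-K_\sF=A+E$ to the log canonical threshold along the general leaf — enlarging $E$ within its $\mathbb{Q}$-linear equivalence class while shrinking the ample part — so as to assume $(F,\Delta+E_{|F})$ log canonical but not klt. The pair then carries a minimal log canonical centre $S\subset F$, which after tie-breaking is unique through a general point and is normal by Kawamata's subadjunction. Spreading this construction out over $Y$, the centres $S=S_y$ fit into a closed subset $\Sigma\subset U$ with $\pi_{|\Sigma}\colon\Sigma\to Y$ dominant of relative dimension $\dim S$, and I set $T:=e(\Sigma)$.

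The main obstacle is the final comparison: showing that for a general $y$ the image $i_y(S_y)$ equals $T$, equivalently that $e$ contracts the fibre direction of $\Sigma\to Y$ so that $\dim T=\dim S$. Here I would use that minimal log canonical centres are canonically attached to the pair and that the general log leaves are the fibres of the single pair $(U,\Delta_U+e^*E)$: inversion of adjunction should identify $\Sigma$, near the general fibre, with the minimal non-klt locus of this total pair, and then — since distinct general leaves are disjoint while $e$ is birational — a dimension count forces the family $\Sigma\to Y$ of centres to map onto one and the same irreducible subvariety $T$, which consequently lies in the closure of every general leaf. Matching the minimal centre of a fibre with the restriction of the minimal centre of the total pair, and ruling out that $\Sigma$ dominates a subvariety of $X$ of dimension larger than $\dim S$, is where the real work lies; the rest is a direct transcription of \cite[Proposition 3.13]{fano_fols_2}.
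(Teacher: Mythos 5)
There are two genuine gaps here, and the second is fatal.

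First, the reduction in your third paragraph is not legitimate. The conclusion of the proposition concerns log canonical centers of the pair $(F,\Delta+E_{|F})$ for the divisor $E$ fixed in the statement. Once you ``slide the decomposition to the log canonical threshold'', i.e.\ trade part of $A$ for a larger effective divisor, you are working with a different pair: the centers you produce are centers of that new pair, and nothing relates them back to $(F,\Delta+E_{|F})$ --- centers manufactured by tie-breaking appear wherever the auxiliary representative concentrates, so they cannot be expected to be centers of the original pair, nor to have constant image. What your reduction silently assumes is precisely the nontrivial fact that $(F,\Delta+E_{|F})$ itself is not klt. In the paper this is Proposition \ref{proposition:thm_alg_int}, proved via the weak positivity theorem (Proposition \ref{prop:pseudo_effective}); it is subsumed in, not an input to, the argument of \cite[Proposition 3.13]{fano_fols_2} that the paper transcribes, and it cannot be arranged ``for free''.

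Second, the constancy of the images --- which you yourself flag as ``where the real work lies'' --- is the whole content of the proposition, and the tools you propose cannot deliver it. Disjointness of general leaves, birationality of $\nu$ (your $e$), inversion of adjunction and a dimension count show at most that a common $T$, if it exists, must lie under the $\nu$-exceptional locus; they cannot exclude, say, a family of zero-dimensional centers $S_y$ whose images trace out a curve in $X$ (disjoint curves in a surface all meeting a section of a fibration are a perfectly consistent picture, so no dimension-theoretic argument can forbid this). Tellingly, your final step never uses the ampleness of $A$, although the statement is false without it. The argument the paper refers to is again the weak-positivity machine, run in the style of the proof of Proposition \ref{proposition:thm_alg_int}: assuming no log canonical center of the general log leaf $(Z_y,\Delta_y+E_{|Z_y})$ has constant image, one uses the ampleness of $A$ to choose a decomposition $\nu^*A\sim_{\bQ}H+N$ with $H$ ample and $N$ effective, $0<\varepsilon\ll 1$, a nonzero effective divisor $D$ on $Y$, and a general effective representative $A'$ of the ample class $\nu^*A-\varepsilon N-\psi^*D$, in such a way that the boundary $\Gamma=B+\nu^*E+\varepsilon N+A'$ still restricts to a log canonical divisor on the general fibre $Z_y$; the hypothesis that the centers are not constant is exactly what is needed to make the auxiliary divisors avoid them. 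Since $K_{Z_y}+\Gamma_{|Z_y}\sim_{\bQ}0$, Proposition \ref{prop:pseudo_effective} applied to $\sH_Z$ then makes $K_{\sH_Z}+\Gamma\sim_{\bQ}-\psi^*D$ pseudo-effective, which is absurd. It is this contradiction that simultaneously yields the non-klt-ness and forces some center to have image independent of $y$. Your write-up has the correct skeleton (family of leaves, spreading out the centers, $T$ as an image), but it is missing this engine, and inversion of adjunction plus a dimension count will not replace it; as it stands, your $T:=e(\Sigma)$ is only the locus swept out by the centers, which a priori is strictly larger than the image of any one of them.
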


\begin{say}[The family of log leaves] \label{family_leaves} 
Let $X$ be normal projective variety, and let $\sF$ be an algebraically integrable foliation on $X$.
We describe the \emph{family of leaves} of $\sF$
(see \cite[Remark 3.12]{codim_1_del_pezzo_fols}).
There is a unique normal projective variety $Y$ contained in the normalization 
of the Chow variety of $X$ 
whose general point parametrizes the closure of a general leaf of $\sF$
(viewed as a reduced and irreducible cycle in $X$).
Let $Z \to Y\times X$ denotes the normalization of the universal cycle.
It comes with morphisms:

\centerline{
\xymatrix{
Z \ar[r]^{\nu}\ar[d]_{\psi} & X, \\
 Y &
}
}
\noindent where $\nu\colon Z\to X$ is birational and, for a general point $y\in Y$, 
$\nu\big(\psi^{-1}(y)\big) \subset X$ is the closure of a leaf of $\sF$.
The variety $Y$ is called the \emph{family of leaves} of $\sF$.

Suppose moreover that $\sF$ is $\bQ$-Gorenstein. Denote by $\sF_Z$ the foliation induced by $\sF$ (or $\psi$) on $Z$.
There is a canonically defined effective Weil $\bQ$-divisor $B$ on $Z$ such that 
\begin{equation}\label{eq:universal_canonical_bundle_formula}
K_{\sF_Z}+B=K_{Z/Y}-R(\psi)+B \sim_\mathbb{Q} \nu^* K_\sF,
\end{equation}
where $R(\psi)$ denotes the ramification divisor of $\psi$. Note that the equality
$K_{\sF_Z}=K_{Z/Y}-R(\psi)$ follows from from \eqref{morphism_fol}.
Suppose that $y\in Y$ is a general point, and
set $Z_y := \psi^{-1}(y)$ and $\Delta_y:=B_{|U_y}$.
Then $(Z_y, \Delta_y)$ coincides with the general log leaf $(F,\Delta)$ defined above.
\end{say}

\begin{rem}In the setup of \ref{family_leaves}, 
we claim that $B$ is $\nu$-exceptional.
This is an immediate consequence of the equality $\nu_*K_{\sF_Z}=K_\sF$.
\end{rem}

\begin{rem}
In the setup of \ref{family_leaves}, suppose furthermore that 
$K_\sF$ is a Cartier divisor. Then $B$ is a Weil divisor, and 
\eqref{eq:universal_canonical_bundle_formula} reads 
$$K_{\sF_Z}+B=K_{Z/Y}-R(\psi)+B \sim \nu^* K_\sF.$$
\end{rem}

We end this subsection with a useful criterion of algebraic integrability for foliations.

\begin{thm}[{\cite[Theorem 0.1]{bogomolov_mcquillan01}, \cite[Theorem 3.5]{bost}}] \label{thm:BM}
Let $X$ be a normal complex projective variety, and let $\sF$ be a  foliation on $X$.
Let $C \subset X$ be a complete curve disjoint from the singular loci of $X$ and $\sF$.
Suppose that the restriction $\sF_{|C}$ is an ample vector bundle on $C$.
Then the leaf of $\sF$ through any point of $C$ is an algebraic variety.
\end{thm}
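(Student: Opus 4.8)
The plan is to deduce algebraicity by producing, through a general point of $C$, a positive-dimensional \emph{algebraic} subvariety tangent to $\sF$ and then identifying it with the closure of the leaf. Since $C$ is disjoint from the singular loci of $X$ and $\sF$, I may work on an open neighborhood of $C$ on which $\sF$ is an honest involutive subbundle of $T_X$; the holomorphic Frobenius theorem integrates $\sF$ to a (a priori transcendental) leaf $L_x$ through each $x\in C$, and it suffices to show $L_x$ is algebraic. As algebraicity is insensitive to the field of definition, I would first spread the data $(X,\sF,C)$ out over a finitely generated $\mathbb{Z}$-subalgebra $R\subset\mathbb{C}$, obtaining for each closed point $s$ of $\Spec R$ a reduction $(X_s,\sF_s,C_s)$ with $\sF_s$ a subbundle along $C_s$ and $\sF_s|_{C_s}$ still ample.

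The main engine is reduction to positive characteristic combined with Mori's bend-and-break, following Bogomolov and McQuillan. Fix a closed point $s$ of large residue characteristic $p$. The crucial mechanism is that relative Frobenius amplifies positivity: pulling back the parametrization of $C_s$ by a high power of Frobenius inflates the $\sF_s$-degree of the resulting map while keeping its genus controlled, and the involutivity of $\sF_s$ (its stability under $p$-th powers) is what forces the deformations to remain tangent to $\sF_s$. Applying bend-and-break, I expect to obtain through a general point of $C_s$ a rational curve tangent to $\sF_s$ whose $(-K_{\sF_s})$-degree is bounded \emph{independently of} $p$. Sweeping out by chains of such curves then yields an algebraic, rationally chain connected subvariety $V_s$ tangent to $\sF_s$ and passing through the chosen point.

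The uniform degree bound is precisely what makes the passage back to characteristic zero work: the $V_s$ then lie in a bounded family (finitely many components of the relevant relative Chow or Hilbert scheme), so they specialize to an algebraic subvariety $V\subset X$ that is tangent to $\sF$ and passes through the given point $x\in C$. Because $V$ is an algebraic subvariety tangent to the subbundle $\sF$, its smooth locus through $x$ is an integral submanifold of $\sF$, hence contained in $L_x$ by uniqueness of the leaf; conversely $L_x$ is open in $V$ once $\dim V=\operatorname{rank}\sF$. Thus $V$ is the Zariski closure of $L_x$, and $L_x$ is algebraic. Running the construction for $x$ general and then propagating over the family of leaves meeting $C$ handles every point of $C$.

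The hard part is the characteristic-$p$ step: one must guarantee both that the rational curves produced by bend-and-break genuinely stay tangent to the foliation and that their degrees admit a bound uniform in $p$, since without such a bound the lifting to characteristic zero collapses. This tangency-preservation and degree control is the technical heart of the Bogomolov--McQuillan argument. An alternative route, due to Bost, bypasses characteristic $p$ entirely: one views the formal leaf along $C$ as a smooth formal subscheme of the completion $\hat{X}_C$ and invokes an arithmetic algebraization criterion, in which the ampleness of $\sF|_C$ supplies the archimedean size estimate together with the $p$-adic convergence bounds needed to force the formal leaf to be the completion of an algebraic subvariety.
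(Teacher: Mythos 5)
The first thing to note is that the paper contains no proof of this statement: Theorem \ref{thm:BM} is imported verbatim from the literature (it is attributed to Bogomolov--McQuillan \cite{bogomolov_mcquillan01} and Bost \cite{bost}) and is used as a black box, e.g.\ in Step 3 of the proof of Proposition \ref{proposition:regular_versus_algebraically_integrable_foliation}. So there is no in-paper argument to compare yours against; the only fair comparison is with the cited sources, whose strategies you have correctly identified: reduction modulo $p$, Frobenius amplification of the ampleness of $\sF_{|C}$, bend-and-break with a degree bound uniform in $p$, and lifting back to characteristic zero (the Bogomolov--McQuillan route, written up carefully in \cite{kebekus_solaconde_toma07}), or alternatively Bost's arithmetic algebraization of the formal leaf along $C$.

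As a proof, however, your text has genuine gaps, which you partly flag yourself (``I expect to obtain'', ``one must guarantee'', ``the hard part is''). Two points deserve emphasis. First, your parenthetical identification of involutivity of $\sF_s$ with ``its stability under $p$-th powers'' is incorrect as stated: closure under the Lie bracket is preserved by reduction modulo $p$, but closure under the $p$-th power operation ($p$-closedness) is an extra condition that the reduction of a characteristic-zero foliation need not satisfy for any given prime; coping with this failure (or structuring the argument so that $p$-closedness is never needed, as Bogomolov--McQuillan and Kebekus--Sol\'a Conde--Toma do by deforming morphisms with constrained differential rather than passing to a Frobenius quotient) is precisely one of the technical cores of the theorem. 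Second, the uniform-in-$p$ degree bound and the tangency of the bend-and-break curves to $\sF_s$ are not side conditions to be checked later --- together with the $p$-closedness issue they \emph{are} the theorem, and a proof that defers them is a road map of the literature rather than an argument. The same applies to the final paragraph: invoking Bost's criterion requires verifying its size and convergence hypotheses from the ampleness of $\sF_{|C}$, which is the substance of \cite[Theorem 3.5]{bost}. None of this is a criticism of the paper, which never claims a proof; but your attempt should be assessed as an accurate summary of the known proofs, not as a proof.
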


\section{Movable complete intersection curves and semistable sheaves}

In order to prove our results, we 
construct subfoliations of the foliation $\sF$ which 
inherit some of the positivity properties of $\sF$.
One way to construct such subfoliations
is via Harder-Narasimhan filtrations
(see Proposition \ref{proposition:HN_on_the_base} below).

The notion of slope-stability with respect to the choice of an ample line bundle is not flexible enough to allow for applications in birational geometry.
The paper \cite{gkp_movable} (see also \cite{campana_peternell11}) extends a number of known results from the classical case
to the setting where the ambient variety is normal and 
$\mathbb{Q}$-factorial, and polarisations are given by movable curve classes.
Recall that a numerical curve class $\alpha$ on a normal projective variety
is movable if 
$D\cdot \alpha \ge 0$ for all effective Cartier divisors $D$.
In this paper, it is advantageous to generalize the notion of slope, replacing movable curve classes with movable complete intersection curve classes. 

Let $X$ be an $n$-dimensional normal projective variety. 
Consider the space $\N_1(X)_\mathbb{R}$ of numerical curve classes on $X$.

\begin{notation}
Let $X$ be an $n$-dimensional normal projective variety, and let $\alpha$ be a numerical curve class on $X$. We say 
that $\alpha$ is a \emph{complete intersection numerical curve class} if 
$\alpha = [D_1 \cdot \cdots \cdot D_{n-1}] \in \N_1(X)_\mathbb{R}$ for some Cartier divisors 
$D_1,\ldots,D_{n-1}$ on $X$.
\end{notation}

\begin{rem}\label{remark:cincc}
Let $D_1,\ldots,D_{n-1}$ be Cartier divisors on $X$. Suppose that $D_i$ is nef for all $i$. Then 
$\alpha :=[D_1 \cdot \cdots \cdot D_{n-1}]$ is a movable complete intersection numerical curve class. 
\end{rem}

\begin{defn}Let $X$ be a normal projective variety.
Let $\alpha$ be a complete intersection numerical curve class on $X$, 
and let $\sF$ be a torsion-free sheaf of positive rank $r$.  

We
define the slope of $\sF$ with respect to $\alpha$ to be
$\mu_{\alpha}(\sF)=\frac{\det \sF\cdot \alpha}{r}$.  

We
say that  $\sF$ is
\emph{$\alpha$-semistable} if for any  
subsheaf $\sE\neq 0$ of $\sF$ we have $\mu_{\alpha}(\sE)\leq\mu_{\alpha}(\sF)$.
\end{defn}

The same argument used in the proof of 
\cite[Corollary 2.26]{gkp_movable} shows that the following holds.

\begin{prop}
Given a torsion-free sheaf $\sF$ on a normal projective variety $X$, 
and a movable complete intersection numerical curve class $\alpha$,
there exists a unique filtration
of $\sF$ by saturated subsheaves
$$
0=\sF_0\subsetneq \sF_1\subsetneq \cdots\subsetneq \sF_k=\sF,
$$
with $\alpha$-semistable quotients $\cQ_i=\sF_i/\sF_{i-1}$, and
such that $\mu_{\alpha}(\cQ_1) > \mu_{\alpha}(\cQ_2) > \cdots >
\mu_{\alpha}(\cQ_k)$.
\end{prop}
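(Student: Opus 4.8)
The plan is to transpose the classical construction of the Harder--Narasimhan filtration to the polarisation $\alpha$, following the argument of \cite[Corollary 2.26]{gkp_movable}; the only non-formal ingredient will be a boundedness statement. Throughout, set $\deg_\alpha(\sE):=\det(\sE)\cdot\alpha$ for a torsion-free sheaf $\sE$, so that $\mu_\alpha(\sE)=\deg_\alpha(\sE)/\textup{rk}(\sE)$. I would first stress that these quantities are well defined with no $\mathbb{Q}$-factoriality hypothesis on $X$, which is precisely what the complete intersection assumption on $\alpha$ buys us: writing $\alpha=[D_1\cdots D_{n-1}]$ and replacing the $D_i$ by large multiples, a general complete intersection curve in the corresponding linear systems avoids both $\Sing X$ and the locus where $\sE$ fails to be locally free (each of codimension at least two in $X$), so that $\deg_\alpha(\sE)$ is computed as the degree of a genuine vector bundle on a smooth curve. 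Since $c_1$ is additive on short exact sequences and intersection with $\alpha$ is linear on $\N_1(X)_{\mathbb{R}}$, the degree $\deg_\alpha$ is additive; hence $\mu_\alpha$ satisfies the usual intermediate-value inequalities on a short exact sequence, and, as passing to the saturation can only increase the degree, it suffices to work with saturated subsheaves.

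The crucial step is to show that
\[
\mu_{\alpha,\max}(\sF):=\sup\big\{\mu_\alpha(\sE)\ :\ 0\ne\sE\subseteq\sF\text{ saturated}\big\}
\]
is finite and attained. This is the one point that does not follow formally, and I expect it to be the \emph{main obstacle}. I would establish finiteness exactly as in \cite{gkp_movable}: the saturated subsheaves of $\sF$ with $\mu_\alpha$ bounded below form a bounded family, the mechanism being that the complete intersection structure of $\alpha$ lets one control $\deg_\alpha$ by restriction to general complete intersection curves and thereby reduce to the elementary boundedness of subsheaves of a fixed bundle on a curve; making this uniform over all subsheaves is the genuine content imported from \cite{gkp_movable}. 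Granting finiteness, a standard maximality argument (choose a subsheaf realising the supremum, then one of maximal rank among those) produces a saturated subsheaf $\sF_1\subseteq\sF$ with $\mu_\alpha(\sF_1)=\mu_{\alpha,\max}(\sF)$ of maximal rank with this property; by construction $\sF_1$ is $\alpha$-semistable.

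Next I would prove uniqueness of $\sF_1$. If $\sF_1'$ were another such subsheaf, then, $\sF/\sF_1$ being torsion-free because $\sF_1$ is saturated, the image of $\sF_1'$ in $\sF/\sF_1$ is a torsion-free quotient of the semistable sheaf $\sF_1'$, hence of slope $\ge\mu_{\alpha,\max}(\sF)$. Combined with $\mu_\alpha(\sF_1)=\mu_{\alpha,\max}(\sF)$ and additivity of $\deg_\alpha$, this forces $\mu_\alpha(\sF_1+\sF_1')=\mu_{\alpha,\max}(\sF)$, and the same then holds for its saturation. Maximality of the rank of $\sF_1$ gives $\textup{rk}(\sF_1+\sF_1')=\textup{rk}(\sF_1)$, whence $\sF_1'\subseteq\sF_1$, and by symmetry $\sF_1'=\sF_1$.

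Finally, existence and uniqueness of the whole filtration follow by induction on the rank. One checks $\mu_{\alpha,\max}(\sF/\sF_1)<\mu_{\alpha,\max}(\sF)$: the preimage in $\sF$ of a subsheaf of $\sF/\sF_1$ of slope $\ge\mu_{\alpha,\max}(\sF)$ would have slope $\ge\mu_{\alpha,\max}(\sF)$ and rank strictly larger than that of $\sF_1$, contradicting maximality. Applying the induction hypothesis to the torsion-free sheaf $\sF/\sF_1$ and pulling its filtration back to $\sF$ yields the desired chain, with $\cQ_1=\sF_1$ the maximal destabilising subsheaf and the remaining $\cQ_i$ the semistable quotients of $\sF/\sF_1$; the strict inequality just established gives $\mu_\alpha(\cQ_1)>\mu_\alpha(\cQ_2)$, and the $\sF_i$ are saturated because saturation is transitive. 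Uniqueness of the filtration then follows from the uniqueness of $\sF_1$ by the standard comparison of two such filtrations.
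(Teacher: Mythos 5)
Your proposal matches the paper's proof in approach: the paper's entire argument for this proposition is the remark that the proof of \cite[Corollary 2.26]{gkp_movable} carries over verbatim to slopes taken with respect to a movable complete intersection class, which is precisely what you carry out (boundedness and attainment of the maximal slope imported from \cite{gkp_movable}, followed by the formal construction of the maximal destabilizing subsheaf, its uniqueness, and induction on the rank). One inessential caveat: your justification of well-definedness of $\deg_\alpha$ via general complete intersection curves implicitly requires the $D_i$ to be (multiples of) very ample divisors, whereas for arbitrary Cartier divisors $D_1,\ldots,D_{n-1}$ one should instead invoke standard intersection theory of a Weil divisor class against $n-1$ Cartier classes; this is presupposed by the paper's definition of slope and does not affect the body of your argument.
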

This is called the \emph{Harder-Narasimhan filtration} of $\sF$. The sheaf
$\sF_1$ is called the maximally destabilizing subsheaf of $\sF$, and it satisfies
$$\mu_\alpha(\sF_1)=\mu_\alpha^{\max}(\sF):=\sup \{\mu_\alpha(\sE)\,|\, 
0\neq \sE \subseteq \sF\text{ a coherent subsheaf}\}.$$

We will need the following easy observations.

\begin{lemma}\label{lemma:semistability_reflexive hull}
Let $\sF$ be a torsion-free sheaf on a normal projective $X$, and let $\alpha$ be a complete intersection 
numerical curve class on $X$. Then $\sF$ is $\alpha$-semistable if and only if so is $\sF^{**}$. 
\end{lemma}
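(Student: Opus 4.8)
The plan is to prove both implications by comparing destabilizing subsheaves of $\sF$ with those of its reflexive hull $\sF^{**}$, using the fact that $\sF$ and $\sF^{**}$ agree outside a closed subset of codimension at least $2$. The key observation is that the complete intersection class $\alpha = [D_1 \cdots D_{n-1}]$ is represented by a curve that, for a general choice of members in the linear systems $|D_i|$ (or rather for a suitable movable representative), avoids this codimension-$2$ locus. Consequently, slopes computed with respect to $\alpha$ are insensitive to modifications of a sheaf in codimension $2$.

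First I would record the standard fact that $\mu_\alpha(\sE) = \mu_\alpha(\sE^{**})$ for any torsion-free sheaf $\sE$. This follows because $\det \sE$ and $\det \sE^{**}$ are isomorphic as reflexive sheaves of rank one (both being the reflexive hull of $\wedge^r \sE$), hence define the same Weil divisor class up to codimension-$2$ differences, and intersection with the movable complete intersection class $\alpha$ only sees divisors up to such differences. I would state this as the core computation and then deduce the equivalence.

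For the forward direction, assume $\sF$ is $\alpha$-semistable and let $0 \ne \sE \subseteq \sF^{**}$ be a subsheaf. Set $\sE' := \sE \cap \sF$ inside $\sF^{**}$; since $\sF$ and $\sF^{**}$ agree generically, $\sE'$ is a nonzero subsheaf of $\sF$ of the same rank as $\sE$, and $\sE' \subseteq \sE \subseteq (\sE')^{**}$, so $\sE$ and $\sE'$ share the same reflexive hull. Then $\mu_\alpha(\sE) = \mu_\alpha(\sE') \le \mu_\alpha(\sF) = \mu_\alpha(\sF^{**})$, where the first equality uses the slope-invariance under reflexive hull and the middle inequality uses semistability of $\sF$. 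For the converse, assuming $\sF^{**}$ is $\alpha$-semistable, any nonzero $\sE \subseteq \sF$ is also a subsheaf of $\sF^{**}$, so $\mu_\alpha(\sE) \le \mu_\alpha(\sF^{**}) = \mu_\alpha(\sF)$ directly.

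The main obstacle will be making precise the claim that slopes are invariant under passing to the reflexive hull, i.e. that $\det\sF \cdot \alpha = \det\sF^{**}\cdot\alpha$, which requires knowing that a representative curve of the movable complete intersection class $\alpha$ can be chosen disjoint from the codimension-$2$ locus where $\sF$ and $\sF^{**}$ differ; this is precisely where the complete-intersection structure of $\alpha$ (rather than an arbitrary movable class) is used, via a Bertini-type general-position argument on the $|D_i|$. Once this geometric input is secured, the rank-matching argument with $\sE \cap \sF$ is routine and I would leave those verifications to the reader.
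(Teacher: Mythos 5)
The paper states this lemma without proof (it is one of the ``easy observations''), so your proposal stands on its own. Your skeleton is correct and is surely the intended argument: the converse direction is immediate from $\sE \subseteq \sF \subseteq \sF^{**}$, and the forward direction works exactly as you describe, via $\sE' := \sE \cap \sF$, which is nonzero, has the same rank as $\sE$, and has the same reflexive hull, hence the same slope.

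The genuine problem is what you single out as the ``main obstacle'' and the tool you propose for it. You claim that the slope-invariance $\mu_\alpha(\sE)=\mu_\alpha(\sE^{**})$ requires choosing a curve representing $\alpha$ disjoint from the codimension-two locus where $\sE$ and $\sE^{**}$ differ, via a Bertini-type argument on the linear systems $|D_i|$. That step would fail under the hypotheses of the lemma: $\alpha$ is \emph{not} assumed movable here (the statement only says ``complete intersection numerical curve class''), and the $D_i$ are arbitrary Cartier divisors, whose linear systems may have base loci or even be empty, so no general-position representative curve need exist; nor is the pairing $\det\sE\cdot\alpha$ defined by evaluating on a representative curve. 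Fortunately, no such argument is needed. With $\det\sE:=(\wedge^r\sE)^{**}$, the natural map $\wedge^r\sE \to \wedge^r(\sE^{**})$ is an isomorphism outside a closed subset of codimension $\ge 2$ (a torsion-free sheaf on a normal variety is locally free in codimension one), and a morphism of reflexive sheaves on a normal variety which is an isomorphism in codimension one is an isomorphism; hence $\det\sE \cong \det(\sE^{**})$, i.e.\ they give literally the same Weil divisor class. The number $\det\sE\cdot\alpha$ is by definition the intersection number of this $(n-1)$-cycle class with the Cartier divisors $D_1,\ldots,D_{n-1}$, which is well defined by standard intersection theory and depends only on the class of $\det\sE$. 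Indeed, this is precisely why the paper replaces movable classes by complete intersection classes: it makes the pairing with non-$\mathbb{Q}$-Cartier Weil divisors meaningful without any $\mathbb{Q}$-factoriality assumption, not because it enables general-position arguments. With this one-line substitution for your ``core computation,'' the rest of your proof goes through as written.
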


\begin{notation}\label{notation:numerical_pullback}
Let $\phi \colon X \to Y$ be any birational morphism of projective normal varieties, and let 
$\alpha = [D_1 \cdot \cdots \cdot D_{n-1}] \in \N_1(X)_\mathbb{R}$ be 
complete intersection numerical curve class.
Set $\phi^*\alpha:=[\phi^*D_1 \cdot \cdots \cdot \phi^*D_{n-1}] \in \N_1(X)_\mathbb{R}$.
We refer to it as the \emph{numerical pull-back of} $\alpha$.
\end{notation}

\begin{rem}
In the setup of Notation \ref{notation:numerical_pullback}, we have
$D\cdot \phi^*\alpha = \phi_*D\cdot \alpha$ 
for any Weil $\mathbb{Q}$-divisor $D$ on $X$
by the projection formula. It follows that this construction agrees with the usual numerical pull-back if $X$ and $Y$ are
$\mathbb{Q}$-factorial.
\end{rem}

\begin{rem}
The pull-back of any movable complete intersection numerical curve class is again a 
movable complete intersection numerical curve class.
\end{rem}

\begin{lemma}\label{lemma:pull-back_semistability_birational_morphism}
Let $\phi \colon X \to Y$ be any birational morphism of projective normal varieties, and let $\sF$ be a torsion-free sheaf on $X$. Let $\alpha$ be a complete intersection numerical curve class on $Y$. Then 
$\sF$ is $\phi^*\alpha$-semistable 
if and only if the torsion-free sheaf $\phi_*\sF$ is $\alpha$-semistable.
\end{lemma}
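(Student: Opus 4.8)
The plan is to reduce the statement to a single \emph{slope identity} across $\phi$, using that a birational morphism of normal projective varieties is an isomorphism in codimension one. Concretely, since $\phi$ is proper, birational, and $Y$ is normal, there is an open subset $V\subseteq Y$ with $\codim_Y(Y\setminus V)\ge 2$ such that $\phi^{-1}(V)\to V$ is an isomorphism. The key consequence I would record is that \emph{every prime divisor of $X$ contained in $\phi^{-1}(Y\setminus V)$ is $\phi$-exceptional}: its image lies in $Y\setminus V$, which has codimension $\ge 2$, so it cannot dominate a divisor of $Y$. In particular, if $C\ge 0$ is an effective divisor on $X$ whose support misses $\phi^{-1}(V)$, then $\phi_*C=0$.

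First I would prove the slope identity: for every torsion-free sheaf $\sE$ of positive rank $r$ on $X$,
\[
\mu_{\phi^*\alpha}(\sE)=\mu_\alpha(\phi_*\sE).
\]
Here $\operatorname{rk}\phi_*\sE=r$ because $\phi$ is birational; $\det\sE\cdot\phi^*\alpha=\phi_*(\det\sE)\cdot\alpha$ by the projection formula (the remark following Notation~\ref{notation:numerical_pullback}); and $\phi_*(\det\sE)=\det(\phi_*\sE)$ as Weil divisor classes on $Y$, since the two sides agree over $V$ and a Weil class is determined off a set of codimension $\ge 2$.

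With this in hand the two implications are short. If $\phi_*\sF$ is $\alpha$-semistable and $0\ne\sE\subseteq\sF$, then $\phi_*\sE\subseteq\phi_*\sF$ by left-exactness, and the identity gives $\mu_{\phi^*\alpha}(\sE)=\mu_\alpha(\phi_*\sE)\le\mu_\alpha(\phi_*\sF)=\mu_{\phi^*\alpha}(\sF)$, so $\sF$ is $\phi^*\alpha$-semistable. Conversely, assume $\sF$ is $\phi^*\alpha$-semistable and let $0\ne\sG\subseteq\phi_*\sF$ have rank $s$. I would set $\sE:=\operatorname{im}\big(\phi^*\sG\to\phi^*\phi_*\sF\to\sF\big)$, using the counit of adjunction; it is a torsion-free subsheaf of $\sF$ of rank $s$, isomorphic to $\sG$ over $V$. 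The surjection $\phi^*\sG\twoheadrightarrow\sE$ induces a nonzero map $\phi^*\det\sG\to\det\sE$ that is an isomorphism over $V$, whence $\det\sE\cong\phi^*\det\sG\otimes\sO_X(C)$ for some effective divisor $C$ supported off $\phi^{-1}(V)$. By the observation above $C$ is $\phi$-exceptional, so $\phi_*(\det\sE)=\det\sG$, and therefore $\mu_{\phi^*\alpha}(\sE)=\mu_\alpha(\sG)$. Semistability of $\sF$ then yields $\mu_\alpha(\sG)=\mu_{\phi^*\alpha}(\sE)\le\mu_{\phi^*\alpha}(\sF)=\mu_\alpha(\phi_*\sF)$, so $\phi_*\sF$ is $\alpha$-semistable.

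The step I expect to be the main obstacle is the determinant comparison in the second implication: transporting a subsheaf from $Y$ back to $X$ changes its determinant along the exceptional locus of $\phi$, and a priori this could perturb the slope. The resolution is precisely the codimension-two input, which forces any such correction $C$ to be $\phi$-exceptional and hence to vanish under $\phi_*$; note that this makes the argument work for an arbitrary complete intersection class $\alpha$, with no movability needed (so in particular no reduction to saturated subsheaves is required). I would finally double-check the two functorialities used — that $\det$ commutes with the pull-back $\phi^*$ and that $\phi_*\phi^*$ is the identity on Weil divisor classes — both of which again rest on $\phi$ being an isomorphism in codimension one.
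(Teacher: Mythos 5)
Your overall strategy is the right one, and it is evidently the proof the paper has in mind: the lemma is stated there without proof, as one of the ``easy observations'' following the remark that records the projection-formula identity $D\cdot\phi^*\alpha=\phi_*D\cdot\alpha$ for Weil $\bQ$-divisors $D$, and your slope identity $\mu_{\phi^*\alpha}(\sE)=\mu_\alpha(\phi_*\sE)$ (valid because $\phi_*(\det\sE)$ and $\det(\phi_*\sE)$ agree over the big open set $V$, hence everywhere, since a Weil class on a normal variety is determined outside a set of codimension $\ge 2$) is precisely how that remark gets used. With the identity in hand, your forward implication is complete as written.

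The one step I would not let stand as written is the determinant comparison in the converse. You write $\det\sE\cong\phi^*\det\sG\otimes\sO_X(C)$, but $Y$ is only assumed normal, so $\det\sG$ is merely a rank-one reflexive sheaf (a Weil divisor class), not a line bundle, and $\phi^*\det\sG$ has no meaning in the paper's framework: pull-back is defined there for Cartier classes, or for Weil classes under equidimensional morphisms, and a birational morphism is equidimensional only when it is an isomorphism. (Interpreting $\phi^*\det\sG$ as the sheaf-theoretic pull-back does not help directly, since that sheaf can have torsion and is not reflexive, so the ``twist by an effective exceptional divisor'' formula does not parse.) Fortunately you do not need this object at all: since $\sE=\operatorname{im}\big(\phi^*\sG\to\sF\big)$ restricts to $\sG$ over $V$ under the isomorphism $\phi^{-1}(V)\cong V$, the sheaves $\phi_*\sE$ and $\sG$ agree on $V$, whose complement has codimension $\ge 2$, so $\det(\phi_*\sE)=\det\sG$ as Weil classes and $\mu_\alpha(\phi_*\sE)=\mu_\alpha(\sG)$. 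Your slope identity applied to $\sE$ then gives
$$\mu_\alpha(\sG)=\mu_\alpha(\phi_*\sE)=\mu_{\phi^*\alpha}(\sE)\le\mu_{\phi^*\alpha}(\sF)=\mu_\alpha(\phi_*\sF),$$
which is the desired inequality. In other words, the codimension-two principle you already invoke closes the argument entirely on $Y$; there is no need to transport determinants back to $X$ along the exceptional locus.
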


The proof of the next lemma is similar to that of \cite[Lemma 3.2.2]{HuyLehn}, and so we leave the details to the reader.

\begin{lemma}\label{lemma:pull-back_semistability_finite_morphism}
Let $\phi \colon X \to Y$ be a finite surjective morphism of projective normal varieties, and let $\sF$ be a torsion-free sheaf on $Y$. Let $\alpha$ be a complete intersection numerical curve class on $Y$. Then 
$\sF$ is $\alpha$-semistable 
if and only if $\phi^*\sF/\textup{Tor}(\phi^*\sF)$ is $\phi^*\alpha$-semistable.
\end{lemma}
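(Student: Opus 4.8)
The plan is to reduce the finite-morphism statement to the known slope theory by carefully tracking how slopes transform under $\phi$, following the template of \cite[Lemma 3.2.2]{HuyLehn}. Let $n=\dim X=\dim Y$ and write $d=\deg\phi$. The fundamental computation is that for a complete intersection class $\alpha=[D_1\cdots D_{n-1}]$ on $Y$ and any torsion-free sheaf $\sE$ on $Y$, one has $\det(\phi^*\sE)\cdot\phi^*\alpha = \phi^*(\det\sE)\cdot\phi^*\alpha = d\,(\det\sE\cdot\alpha)$ by the projection formula, while the generic rank is preserved. Hence
\begin{equation*}
\mu_{\phi^*\alpha}\big(\phi^*\sE/\textup{Tor}(\phi^*\sE)\big) = d\cdot\mu_\alpha(\sE).
\end{equation*}
This linear relation is the engine driving both implications, since it shows the pullback functor scales all slopes by the same positive factor $d$ and therefore preserves and reflects slope inequalities.

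First I would prove the easy direction: if $\sF$ is not $\alpha$-semistable, then $\phi^*\sF/\textup{Tor}$ is not $\phi^*\alpha$-semistable. Given a destabilizing subsheaf $\sE\subsetneq\sF$ with $\mu_\alpha(\sE)>\mu_\alpha(\sF)$, apply $\phi^*$ and pass to the image in $\phi^*\sF/\textup{Tor}$; the slope formula above multiplies both sides by $d>0$, producing a destabilizing subsheaf. I would take reflexive hulls where convenient and invoke Lemma \ref{lemma:semistability_reflexive hull} to ignore torsion and codimension-two issues, since slopes with respect to a complete intersection class only see divisorial data.

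The substantive direction is the converse: assuming $\sF$ is $\alpha$-semistable, deduce that $\phi^*\sF/\textup{Tor}$ is $\phi^*\alpha$-semistable. Here I expect the main obstacle, because a destabilizing subsheaf $\sG$ of $\phi^*\sF/\textup{Tor}$ need not itself be a pullback from $Y$. The standard device is a Galois descent argument: reduce to the case where $\phi$ is Galois with group $G$ by passing to the Galois closure (which one may do since $\phi^*$ scales slopes uniformly, so semistability is unaffected by composing finite covers), and then average the maximally destabilizing subsheaf $\sG_1$ over $G$. By uniqueness of the Harder--Narasimhan filtration, $\sG_1$ is $G$-invariant, hence descends to a subsheaf $\sE\subset\sF$ with $\mu_\alpha(\sE)=\tfrac1d\,\mu_{\phi^*\alpha}(\sG_1)>\mu_\alpha(\sF)$, contradicting semistability of $\sF$.

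A few technical points require care but are routine in this setting: the pullback $\phi^*\alpha$ is again a complete intersection numerical curve class (indeed $\phi^*\alpha=[\phi^*D_1\cdots\phi^*D_{n-1}]$), so the slope theory of the earlier propositions applies verbatim on $X$; and the ramification divisor of $\phi$ contributes nothing to the leading slope computation since the determinant comparison $\phi^*\det\sE=\det\phi^*\sE$ holds up to sheaves supported in codimension two, which are invisible to $\phi^*\alpha$. Assembling these, both implications follow from the single scaling identity together with Galois descent, exactly as in the movable-curve case of \cite{gkp_movable}, and I would accordingly state that the details are left to the reader as the paper does.
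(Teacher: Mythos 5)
Your proposal is correct and follows essentially the same route as the paper, which simply defers to the argument of \cite[Lemma 3.2.2]{HuyLehn}: the easy direction by pulling back a destabilizing subsheaf with slopes scaled by $\deg\phi$ via the projection formula, and the converse by passing to a Galois closure and descending the maximal destabilizing subsheaf, which is $G$-invariant by uniqueness of the Harder--Narasimhan filtration. The adaptations you note (that $\phi^*\alpha$ is again a complete intersection class and that codimension-two discrepancies are invisible to such classes) are exactly what makes the classical proof carry over.
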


We will make use of the following notation.

\begin{notation}
Let $X$ be an $n$-dimensional normal projective variety, and let $|H|$ be a basepoint-free complete linear system on $X$. For each $1 \le i \le n-1$, let $D_i \in |H|$ be a general hypersurface, and set 
$C=D_1\cap \cdots \cap D_{n-1}$. We say that $C$ is a \emph{general complete intersection curve on $X$ of type
$H$}.
\end{notation}

We now provide a technical tool for the proof of the main results.
The following result is a generalization of \cite[Lemma 4.7]{fano_fols} (see also \cite[Lemma 12]{druel_paris}).

\begin{prop}\label{proposition:HN_on_the_base}
Let $\psi\colon Z \to Y$ be a dominant morphism of normal projective varieties,
let $X$ be a normal projective variety, and let
$\nu\colon Z \to X$ be a birational morphism.
Let $H$ be a very ample divisor on $X$, and let $\sG$ be a torsion-free sheaf on $Y$.
Then there exists a saturated subsheaf $\sE \subseteq \sG$ satisfying the following property.
For a general complete intersection curve $C$ on $Z$ of type
$m\nu^*H$ with $m$ large enough, ${\psi^*\sE}_{|C}$ is the maximally destabilizing subsheaf of $\psi^*\sG_{|C}$.
\end{prop}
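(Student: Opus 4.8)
The plan is to pass to the Harder--Narasimhan filtration of the pulled-back sheaf on $Z$ and then to descend its maximally destabilizing term to $Y$. Set $n=\dim Z$ and $\alpha:=(\nu^*H)^{n-1}\in\N_1(Z)_{\mathbb R}$. Since $H$ is very ample and $\nu$ is birational, $\nu^*H$ is nef, so $\alpha$ is a movable complete intersection numerical curve class by Remark~\ref{remark:cincc}, and a general complete intersection curve $C$ of type $m\nu^*H$ represents $m^{n-1}\alpha$. Replacing $\psi^*\sG$ by the torsion-free sheaf $\psi^{[*]}\sG:=\psi^*\sG/\textup{Tor}(\psi^*\sG)$ --- which agrees with $\psi^*\sG$ along a general $C$, the torsion being supported in codimension $\ge 1$ --- I would form the Harder--Narasimhan filtration of $\psi^{[*]}\sG$ with respect to $\alpha$ and let $\sW\subseteq\psi^{[*]}\sG$ be its maximally destabilizing subsheaf.

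First I would invoke the Mehta--Ramanathan restriction theorem for movable complete intersection curve classes (cf.\ \cite{gkp_movable} and the results extended in this section): for $m$ large enough and $C$ a general complete intersection curve of type $m\nu^*H$, the restriction to $C$ of the Harder--Narasimhan filtration of $\psi^{[*]}\sG$ coincides with the Harder--Narasimhan filtration of $\psi^{[*]}\sG_{|C}=\psi^*\sG_{|C}$. In particular $\sW_{|C}$ is the maximally destabilizing subsheaf of $\psi^*\sG_{|C}$. Thus the proposition reduces to showing that $\sW$ is pulled back from $Y$, i.e.\ that there is a saturated subsheaf $\sE\subseteq\sG$ with $\psi^{[*]}\sE=\sW$ (equivalently, with the same restriction to a general $C$); then $\psi^*\sE_{|C}=\sW_{|C}$ is the required subsheaf.

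The heart of the argument, and the main obstacle, is this descent. Writing $\beta:=\psi_*\alpha\in\N_1(Y)_{\mathbb R}$, the projection formula $\psi^*D\cdot\alpha=D\cdot\beta$ yields $\mu_\alpha(\psi^{[*]}\sE')=\mu_\beta(\sE')$ for every subsheaf $\sE'\subseteq\sG$, since $\det\psi^{[*]}\sE'$ and $\psi^*\det\sE'$ agree away from codimension two and ranks are preserved; hence $\mu_\alpha^{\max}(\psi^{[*]}\sG)\ge\mu_\beta^{\max}(\sG)$, the right-hand slope being realized by the pullback of the maximally destabilizing subsheaf $\sG_1\subseteq\sG$ (with respect to $\beta$). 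What remains is the reverse inequality together with the fact that the maximum is attained by a pulled-back subsheaf. After Stein factorization I would dispose of the finite part with Lemma~\ref{lemma:pull-back_semistability_finite_morphism}, and for the part with connected fibers restrict to a dense open $Y^\circ\subseteq Y$ over which $\psi$ is flat and $\psi^{[*]}\sG$ is locally free; there $\psi^{[*]}\sG$ restricts to the trivial bundle on each fiber, so on the geometric generic fiber $Z_{\bar\eta}$ it is $\overline{K(Y)}^{\oplus r}\otimes\sO_{Z_{\bar\eta}}$, which is semistable of slope $0$. The crux is to prove that the saturated subsheaf $\sW$ is cut out on $Z_{\bar\eta}$ by a \emph{constant} subspace $V\subseteq\overline{K(Y)}^{\oplus r}$, so that no genuinely ``vertical'' subsheaf can destabilize and the optimum descends; this is the analogue for the dominant morphism $\psi$ of the finite-morphism statement of Lemma~\ref{lemma:pull-back_semistability_finite_morphism}. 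Granting this, $\sW=\psi^*\sE^\circ$ for a subsheaf $\sE^\circ\subseteq\sG_{|Y^\circ}$, and its saturation $\sE$ in $\sG$ does the job; the slope identity $\mu_\alpha(\psi^{[*]}\sE)=\mu_\beta(\sE)$ and the uniqueness of the Harder--Narasimhan filtration then identify $\sE$ with $\sG_1$, completing the proof.
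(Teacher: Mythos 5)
You identify the right reduction --- produce a saturated $\sE\subseteq\sG$ whose pullback restricts, on a general complete intersection curve, to the maximally destabilizing subsheaf --- but your proof stops exactly where the real difficulty begins. The step you yourself call ``the crux'' (that the saturated subsheaf $\sW\subseteq\psi^{[*]}\sG$ is cut out on the geometric generic fibre by a constant subspace $V\subseteq\overline{K(Y)}^{\oplus r}$, so that the optimum is attained by a pulled-back sheaf) is stated and then assumed (``Granting this\dots''); no mechanism for proving it is offered. Note that the fibrewise triviality, hence semistability of slope $0$, of $\psi^{[*]}\sG$ is no obstruction to a saturated subsheaf whose fibrewise subspaces vary: such a subsheaf merely restricts on each fibre to a subsheaf of $\sO^{\oplus r}$ of non-positive degree, which is perfectly consistent with fibrewise semistability. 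Ruling it out must use the global $\alpha$-maximality of $\sW$, and that is precisely the content of the proposition, not a lemma one may grant. The paper supplies the missing mechanism by a quite different route: it cuts $Z$ by $n-l$ general members of $|m\nu^*H|$ (where $l=\dim Y$), so that the resulting $Z_1\to Y$ is generically finite; Stein-factorizes this as a finite morphism $\psi_2\colon Z_1\to Y_1$ followed by a birational $\mu_1\colon Y_1\to Y$; passes to the Galois closure $\nu_2\colon Z_2\to Z_1$ of $\mathbb{C}(Z_1)$ over $\mathbb{C}(Y_1)$, with group $G$; transfers the maximally destabilizing subsheaf up to $Z_2$ via Lemmas \ref{lemma:semistability_reflexive hull}, \ref{lemma:pull-back_semistability_birational_morphism} and \ref{lemma:pull-back_semistability_finite_morphism}; and then uses the \emph{uniqueness} of the maximally destabilizing subsheaf to conclude that it is $G$-invariant, hence descends over a big open subset $Y_1^\circ\subseteq Y_1$ (where $Z_2^\circ/G\cong Y_1^\circ$) to a saturated subsheaf $\sE_1\subseteq\sG_1:=\mu_1^*\sG/\textup{Tor}(\mu_1^*\sG)$; finally $\sE:=(\mu_1)_*\sE_1$. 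Galois invariance plus sheaf-level descent along the quotient is exactly the replacement for your unproven constancy claim.

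Two further structural problems surround the gap. First, the ``Mehta--Ramanathan restriction theorem for movable complete intersection curve classes'' you invoke at the outset is not an available citation: \cite{gkp_movable} contains no restriction theorem, and Proposition \ref{proposition:HN_on_the_base} is itself the generalization of Mehta--Ramanathan that this section of the paper announces. For the particular class $\alpha=(\nu^*H)^{n-1}$ the restriction statement is true, but the way to see it is the paper's: since $|m\nu^*H|=\nu^*|mH|$ and a general complete intersection curve of type $mH$ avoids $\nu\big(\Exc(\nu)\big)$, restriction to a general $C\subset Z$ equals restriction of the pushforward $\sF=\big(\nu_*(\psi^*\sG)\big)^{**}$ to a general complete intersection curve on $X$, where Flenner's theorem for the very ample $H$ applies, together with Lemmas \ref{lemma:semistability_reflexive hull} and \ref{lemma:pull-back_semistability_birational_morphism}. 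Second, the slope $\mu_\beta$ on $Y$ with $\beta=\psi_*\alpha$, on which your concluding identification of $\sE$ with a maximally destabilizing subsheaf $\sG_1\subseteq\sG$ rests, is not defined in the paper's framework: $\beta$ need not be a complete intersection class, and on the normal variety $Y$ the class $\det\sE'$ of a subsheaf is only a Weil divisor class, in general not $\mathbb{Q}$-Cartier, so the pairing $\det\sE'\cdot\beta$ has no meaning. This is why the paper never does slope theory on $Y$ or $Y_1$ at all, and formulates both the statement and the proof purely in terms of restrictions to curves on $Z$, $Z_1$ and $Z_2$, where the polarizations are honest complete intersection classes.
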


\begin{proof}Set $n=\dim  Z=\dim X$ and $l=\dim Y$. 

\medskip

Let $m$ be a positive integer, and
consider general hypersurfaces $H_i\in|mH|$ for $1 \le i\le n-1$. 
By replacing $m$ with a larger integer, we may assume that
the Harder-Narasimhan filtration of $\sF:=\big(\nu_*(\psi^*\sG)\big)^{**}$ with respect to $H$ commutes with
restriction to all complete intersections $H_1\cap\cdots\cap H_i$, $1\le i\le n-1$ 
(see \cite{flenner_restriction}).

Set $D_i:=\nu^{-1}(H_i) \in|m\nu^*H|$, $Z_1:=D_1\cap \cdots \cap D_{n-l}$, and 
$X_1:=H_1\cap\cdots\cap H_{n-l}$.
The restriction of $\nu$ to $Z_1$ yields a surjective birational morphism
$\nu_1\colon Z_1 \to X_1$ of normal varieties. 
Denote by 
$\psi_1\colon Z_1 \to Y$ the restriction of $\psi$ to $Z_1$. Note that $\psi_1$ is generically finite. Let $\psi_2\colon Z_1 \to Y_1$ be its Stein factorization. 
It comes with a birational morphism $\mu_1\colon Y_1 \to Y$.

Set $B=H_1\cap\cdots\cap H_{n-1}$. Up to replacing $H_{n-l+1},\ldots,H_{n-1}$ by linearly equivalent divisors on $X$, we may assume that $B \subset X \setminus \nu_1(\textup{Exc}(\nu_1)\big)$.
Set $C_1=\nu_1^{-1}(B)=\nu^{-1}(B)=D_1\cap \cdots \cap D_{n-1} \cong B$. 

Set $\sG_1:=\mu_1^*\sG/\textup{Tor}(\mu_1^*\sG)$,
$\sT:=(\psi^*\sG)^{**}$, $\sT_1=(\psi_1^*\sG)^{**}\cong (\psi_2^*\sG_1)^{**}$, and
$\sF_1:=\big((\nu_1)_*\sT_1\big)^{**}$.
Since 
$\sF_{|X_1}$ is reflexive, we must have
$$
\sF_1\cong \sF_{|X_1}.$$

Let $\sK$ be the maximally destabilizing subsheaf of $\sF$ with respect to $H$.
By Flenner's version of the Mehta-Ramanathan theorem and the choice of $m$, 
$\sK_1:=\sK_{|X_1}$ is the maximally destabilizing subsheaf of $\sF_1\cong \sF_{|X_1}$
with respect to $H_{|X_1}$.
Note that $\sK_1$ is reflexive by \cite[Remark 2.3]{fano_fols}. Let $\sS_1 \subset \sT_1$
such that $\big((\nu_1)_*\sS_1\big)^{**}=\sK_1 \subset \sF_1$. We may assume that 
$\sS_1$ is saturated in $\sT_1$.
Since $B \subset X \setminus \nu_1(\textup{Exc}(\nu_1)\big)$, we have 
$$\sK_{|B}={\sK_1}_{|B}={\big((\nu_1)_*\sS_1\big)^{**}}_{|B}\cong {(\nu_1)_*\sS_1}_{|B}\cong {\sS_1}_{|C_1}$$
and
$$\sF_{|B}={\sF_1}_{|B}={\big((\nu_1)_*\sT_1\big)^{**}}_{|B}\cong{(\nu_1)_*\sT_1}_{|B}\cong {\sT_1}_{|C_1}.$$
By Flenner's version of the Mehta-Ramanathan theorem and the choice of $m$,
we conclude as above that 
${\sS_1}_{|C_1}$ is the maximally destabilizing subsheaf of ${\sT_1}_{|C_1}$.

Let $K$ be a splitting field of the function field $\mathbb{C}(Z_1)$ over $\mathbb{C}(Y_1)$, and let 
$\nu_2\colon Z_2 \to Z_1$ be 
the normalization of $Z_1$ in $K$. 
Set $\psi_3:=\psi_2\circ\nu_2\colon Z_2 \to Y_1$, and denote by $G$ the Galois group of 
$\mathbb{C}(Z_2)$ over $\mathbb{C}(Y_1)$. We obtain a commutative diagram

\centerline{
\xymatrix{
Z_2\ar[rr]^{\nu_2,\text{ finite}}\ar[d]^{\psi_3,\text{ finite}} && Z_1 \ar@{=}[rr]\ar[d]^{\psi_2, \text{ finite}} && Z_1\ar@{^{(}->}[rrd] \ar[rr]^{\nu_1,\text{ birational}}\ar[d]^{\psi_1} &&  X_1\ar@{^{(}->}[drr] &&\\
Y_1 \ar@{=}[rr] && Y_1 \ar[rr]_{\mu_1,\text{ birational}} && Y \ar@{=}[rrd] && Z \ar[d]^{\psi}\ar[rr]_{\nu,\text{ birational}} && X \\
&&&&&& Y. && \\
}
}

The numerical curve class  $[C_1]\in\N_1(Z_1)_\mathbb{R}$
is a movable complete intersection curve class by Remark \ref{remark:cincc}.
By Lemmas \ref{lemma:semistability_reflexive hull} and \ref{lemma:pull-back_semistability_birational_morphism},
$\sS_1$ is the maximally destabilizing sheaf of 
$\sT_1$ with respect to the movable class $[C_1]=[\nu_1^*H_{|X_1}]^{l -1}\in \N_1(Z_1)_{\mathbb{R}}$.
From Lemmas \ref{lemma:semistability_reflexive hull} and \ref{lemma:pull-back_semistability_finite_morphism}, we deduce that
$\sS_2:=(\nu_2^*\sS_1)^{**}$ is the maximally destabilizing sheaf of 
$(\nu_2^*\sT_1)^{**}\cong (\psi_3^*\sG_1)^{**}=:\sT_2$ with respect to the movable class 
$\nu_2^*[C_1]=[(\nu_1\circ\nu_2)^*H_{|X_1}]^{l -1}\in \N_1(Z_2)_{\mathbb{R}}$.

Let $Y_1^\circ \subset Y_1$ be a dense open subset 
with $\codim\, Y_1 \setminus Y_1^\circ \ge 2$ 
such that $G$ acts on 
$Z_2^\circ:=\psi_3^{-1}(Y_1^\circ)$ and such that ${\sG_1}_{|Y_1^\circ}$ is locally free. Note that 
$Z_2^\circ/ G \cong Y_1^\circ$.
Because of its uniqueness, ${\sS_2}_{|Z_2^\circ}$ 
is invariant under the natural action of $G$ on ${\sT_2}_{|Z_2^\circ}\cong{\psi_3^*\sG_1}_{|Z_2^\circ} $. Therefore there exists a saturated subsheaf
$\sE_1 \subseteq \sG_1$ such that ${\psi_3^*\sE_1}_{|Z_2^\circ}\cong{\sS_2}_{|Z_2^\circ}$. By
Lemma \ref{lemma:pull-back_semistability_finite_morphism} again, we have
that $(\psi_2^*\sE_1)^{**}\cong\sS_1$.

Consider $\sE = (\mu_1)_*\sE_1 \subseteq \sG$. Then $\sE$ is saturated in $\sG$, and 
${\psi^*\sE}_{|C}\cong {\sS_1}_{|C_1}$ is the maximally destabilizing subsheaf of 
$\psi^*\sG_{|C}\cong {\sT_1}_{|C_1}$. The completes the proof of the proposition.
\end{proof}

\section{The anti-canonical divisor of an algebraically integrable foliation}

In this section, we apply Viehweg's weak positivity theorem to algebraically integrable foliations. We refer to
\cite{viehweg_moduli_polarized_manifolds} for the definition of weak positivity. This notion was introduced by Viehweg, as a kind of birational version of being nef.

The following is a generalization of \cite[Lemma 2.14]{hoering} 
and \cite[Theorem 2.11]{campana_paun}
(see also Proposition 
\ref{prop:pseudo_effective2} below). Recall that a $\mathbb{Q}$-divisor $D$ on a normal projective variety $X$ is said to be \textit{pseudo-effective} if, for any big $\mathbb{Q}$-divisor $B$ on $X$ and any rational number 
$\varepsilon > 0$, there exists an effective $\mathbb{Q}$-divisor $E$ on $X$ such that
$D + \varepsilon B \sim_\mathbb{Q} E$.

\begin{prop}\label{prop:pseudo_effective}
Let $X$ be a normal projective variety, let $\sH$ be a $\mathbb{Q}$-Gorenstein algebraically integrable 
(possibly singular)
foliation on $X$, and let $(F,\Delta)$ be its general log leaf. Let $L$ be an effective $\mathbb{Q}$-Cartier $\mathbb{Q}$-divisor on $X$. Suppose that $(F,\Delta+L_{|F})$ is log canonical, and that
$\kappa\big(F,K_F+\Delta+L_{|F}\big) \ge 0$. 
Then $K_\sH+L$ is pseudo-effective.
\end{prop}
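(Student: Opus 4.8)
The plan is to reduce the assertion, by means of the family of log leaves, to the pseudo-effectivity of a relative log-canonical divisor on a fibration, and then to run Viehweg's weak positivity theorem exactly as in the proofs of \cite[Theorem 2.11]{campana_paun} and \cite[Lemma 2.14]{hoering}. First I would apply the construction of \ref{family_leaves} to $\sH$, obtaining a fibration $\psi\colon Z \to Y$ onto the family of leaves, a birational morphism $\nu\colon Z \to X$, and an effective Weil $\mathbb{Q}$-divisor $B$ on $Z$ with $K_{Z/Y}-R(\psi)+B\sim_\mathbb{Q}\nu^*K_\sH$ as in \eqref{eq:universal_canonical_bundle_formula}, and such that for a general point $y\in Y$ the log leaf $\big(Z_y,B_{|Z_y}\big)$ is isomorphic to $(F,\Delta)$. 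Since $\nu$ is birational we have $\nu_*\nu^*D=D$ for every $\mathbb{Q}$-Cartier $\mathbb{Q}$-divisor $D$ on $X$, and push-forward preserves pseudo-effectivity; as $K_\sH$ is $\mathbb{Q}$-Cartier ($\sH$ being $\mathbb{Q}$-Gorenstein) and $L$ is $\mathbb{Q}$-Cartier, it therefore suffices to prove that
$$\nu^*(K_\sH+L)\sim_\mathbb{Q}K_{Z/Y}+\Gamma,\qquad \Gamma:=B+\nu^*L-R(\psi),$$
is pseudo-effective on $Z$, where $B$ and $\nu^*L$ are effective and $R(\psi)$ is an effective $\psi$-vertical divisor.

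The key point is the behaviour along a general fibre. Since $R(\psi)$ is vertical, $Z_y$ is disjoint from it for general $y$, and $(K_{Z/Y})_{|Z_y}=K_{Z_y}=K_F$, $B_{|Z_y}=\Delta$, $(\nu^*L)_{|Z_y}=L_{|F}$; hence
$$\big(K_{Z/Y}+\Gamma\big)_{|Z_y}\sim_\mathbb{Q}K_F+\Delta+L_{|F}.$$
By hypothesis the pair $\big(F,\Delta+L_{|F}\big)$ is log canonical and $\kappa\big(F,K_F+\Delta+L_{|F}\big)\ge 0$. In particular $\Gamma_{|Z_y}$ is effective, $\big(Z_y,\Gamma_{|Z_y}\big)$ is log canonical, and $m\big(K_{Z/Y}+\Gamma\big)_{|Z_y}$ is $\mathbb{Q}$-linearly equivalent to an effective divisor for some $m>0$.

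I would then pass to a common log resolution of $(Z,\Gamma)$, blowing up $Y$ if necessary, so that Viehweg's theorem is available for the resulting fibration, while keeping the restriction of the relative divisor to a general fibre equal to $K_F+\Delta+L_{|F}$. Viehweg's weak positivity theorem \cite{viehweg_moduli_polarized_manifolds} then shows that the direct image sheaf $\psi_*\sO_Z\big(m(K_{Z/Y}+\Gamma)\big)$, suitably defined on the smooth model and nonzero by the previous paragraph, is weakly positive over a dense open subset of $Y$. Combining the weak positivity of this sheaf with the generic surjectivity of the evaluation morphism $\psi^*\psi_*\sO_Z\big(m(K_{Z/Y}+\Gamma)\big)\to \sO_Z\big(m(K_{Z/Y}+\Gamma)\big)$ yields, exactly as in \cite[Theorem 2.11]{campana_paun} and \cite[Lemma 2.14]{hoering}, that $K_{Z/Y}+\Gamma=\nu^*(K_\sH+L)$ is pseudo-effective; pushing forward by $\nu$ gives the pseudo-effectivity of $K_\sH+L$.

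The hard part will be the passage to a smooth birational model on which Viehweg's weak positivity theorem applies: one must control how the relative divisor $K_{Z/Y}+\Gamma$, and in particular the vertical ramification term $R(\psi)$ arising from the non-reduced fibres of $\psi$, transform under resolution and under blow-ups of the base, while ensuring that the restriction to a general fibre stays equal to $K_F+\Delta+L_{|F}$. The log canonicity of $\big(F,\Delta+L_{|F}\big)$ is precisely what guarantees that the relevant direct image sheaf is weakly positive, and the hypothesis $\kappa\big(F,K_F+\Delta+L_{|F}\big)\ge 0$ is what makes it nonzero, so that weak positivity can be converted into pseudo-effectivity.
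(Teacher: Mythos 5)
Your overall strategy (reduce via the family of leaves to a statement about $K_{Z/Y}$ plus a boundary, then invoke weak positivity of direct images) is the paper's strategy, but the proposal has a genuine gap at exactly the point you flag as ``the hard part'': the treatment of the term $-R(\psi)$. The divisor $\Gamma:=B+\nu^*L-R(\psi)$ is \emph{not} effective, and no weak positivity theorem (Viehweg's, or Campana's version for log canonical pairs) applies to $\psi_*\sO_Z\big(m(K_{Z/Y}+\Gamma)\big)$ when the boundary has negative coefficients; effectivity of $\Gamma_{|Z_y}$ on a general fibre is not enough, since the vertical part changes the direct image sheaf globally. Nor can one absorb $-R(\psi)$ by writing $\sO_Z\big(m(K_{Z/Y}+\Gamma)\big)\subset\sO_Z\big(m(K_{Z/Y}+B+\nu^*L)\big)$: weak positivity passes to generically surjective quotients, not to subsheaves, and pseudo-effectivity of $K_{Z/Y}+B+\nu^*L$ does not descend after subtracting the effective divisor $R(\psi)$. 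Most importantly, the mechanism you propose --- a log resolution of $(Z,\Gamma)$ together with blow-ups of $Y$ --- cannot remove this term: $R(\psi)$ records multiple fibres in codimension one (think of an elliptic fibration with a multiple fibre), and multiple fibres are invisible to birational modifications of the base.

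The missing idea in the paper is a \emph{finite} base change rather than a birational one: by the reduced fibre theorem (Lemma \ref{lemma:reduced_fibers_codimension_1}, via \cite{bosch95} or \cite{abramovich_karu}) there is a finite surjective morphism $Y_1\to Y$ such that the normalized pull-back family $\psi_1\colon Z_1\to Y_1$ has reduced fibres over codimension one points, and then a Riemann--Hurwitz computation gives
$$\nu_1^*\big(K_{Z/Y}-R(\psi)\big)=K_{Z_1/Y_1},$$
so the negative vertical term is absorbed into an honest relative canonical divisor. After this step the boundary $\Gamma_1=\nu_1^*B+(\nu\circ\nu_1)^*L$ is effective, inversion of adjunction gives log canonicity of $(Z_1,\Gamma_1)$ over the generic point of $Y_1$, and one can legitimately pass to a log resolution and apply Campana's weak positivity theorem \cite[Theorem 4.13]{campana04}; pushing forward (which only costs a factor $\deg(\nu_1)$, harmless for pseudo-effectivity) recovers $K_\sH+L$. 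Without this finite cover, the argument as you propose it does not go through.
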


\begin{proof}
Let $\psi\colon Z\to Y$ be the family of leaves, and let $\nu\colon Z \to X$ be the natural morphism (see \ref{family_leaves}).
By Lemma \ref{lemma:reduced_fibers_codimension_1} below,
there exists a finite surjective morphism
$Y_1 \to Y$ with $Y_1$ normal and connected satisfying the following property. If $Z_1$ denotes the normalization of the product $Y_1 \times_Y Z$, then the induced morphism $\psi_1\colon Z_1 \to Y_1$ has reduced fibers over codimension one points in $Y_1$. 

By \ref{family_leaves},
there is a canonically defined effective $\mathbb{Q}$-Weil divisor $B$ on $Z$ such that 
\begin{equation}\label{eq:eq1}
K_{Z/Y}-R(\psi)+B\sim_\bQ \nu^* K_\sH
\end{equation}
where $R(\psi)$ denotes the ramification divisor of $\psi$. 
A straightforward computation shows that 
\begin{equation}\label{eq:eq2}
\nu_1^*\big(K_{Z/Y}-R(\psi)\big)=K_{Z_1/Y_1}.
\end{equation}
Set $\Gamma_1=\nu_1^*B+(\nu\circ\nu_1)^*L$.
Let $F_1$ be a general fiber of $\psi_1$. Note that 
$(F_1,{\Gamma_1}_{|F_1})$ has log canonical singularities by assumption. It follows that 
$(Z_1,\Gamma_1)$ has log canonical singularities over the generic point of $Y_1$ by inversion of adjunction
(see \cite{kawakita}). Let $\mu_2\colon Y_2\to Y_1$ be a resolution of singularities, and let 
$Z_2$ be a resolution of singularities of the product $Y_2 \times_{Y_1} Z_1$.
Up to replacing $Z_2$ with a birational model, we may assume that 
$Z_2$ is a log resolution of $(Z_1,\Gamma_1)$. 
We have a commutative diagram
\centerline{
\xymatrix{
Z_2 \ar[rr]^{\nu_2,\text{ birational}}\ar[d]^{\psi_2} && 
Z_1 \ar[rr]^{\nu_1,\text{ finite}}\ar[d]^{\psi_1} && Z \ar[d]^{\psi}\ar[rr]^{\nu,\text{ birational}} && X \\
Y_2 \ar[rr]_{\mu_2,\text{ birational}} && Y_1 \ar[rr]_{\mu_1,\text{ finite}} && Y. && \\
}
}
We write 
\begin{equation}\label{eq:log}
K_{Z_2}+\Gamma_2=\nu_2^*(K_{Z_1}+\Gamma_1)+E
\end{equation}
where $\Gamma_2$ and $E$ are effective, with no common components, $(\nu_2)_*\Gamma_2=\Gamma_1$, and $E$ is $\nu_2$-exceptional. Then $(Z_2,\Gamma_2)$ has log canonical singularities over the generic point of $Y_2$. Let $F_2$ denotes a general fiber of $\psi_2$.
Then
$$
\begin{array}{ccll}
\kappa\big(F_2,K_{F_2}+{\Gamma_2}_{|F_2}\big) & = & \kappa\big(F_1,K_{F_1}+{\Gamma_1}_{|F_1}\big) & \text{ by } \eqref{eq:log}\\
& = & \kappa\big(F,K_{F}+{B}_{|F}+{L}_{|F}\big) 
& \text{ by } \eqref{eq:eq2}\\
& =  & \kappa\big(F,K_{F}+\Delta+{L}_{|F}\big) 
 & \text{ by } \ref{family_leaves}\\
& \ge  & 0.  & \\
\end{array}
$$
Thus, for any positive integer $m$ which is sufficiently divisible, the natural morphism 
$$\psi_2^*\Big((\psi_2)_*\sO_{Z_2}\big(m(K_{Z_2/Y_2}+\Gamma_2)\big)\Big)\to  \sO_{Z_2}\big(m(K_{Z_2/Y_2}+\Gamma_2)\big)$$
is generically surjective. 
The sheaf 
$(\psi_2)_*\sO_{Z_2}\big(m(K_{Z_2/Y_2}+\Gamma_2)\big)$ is weakly positive
by \cite[Theorem 4.13]{campana04}. Therefore, $K_{Z_2/Y_2}+\Gamma_2$ is pseudo-effective, and hence
so is $(\nu\circ\nu_1\circ\nu_2)_*(K_{Z_2/Y_2}+\Gamma_2)$. Since
$\psi_2$ is equidimensional, there exist dense open subsets
$Y_1^\circ\subset Y_1$ and $Z_1^\circ \subset Z_1$ with 
$\codim \, Y_1 \setminus Y_1^\circ \ge 2$ and $\codim \, Z_1 \setminus Z_1^\circ \ge 2$
such that $\psi_1(Z_1 \setminus Z_1^\circ)\subset Y_1 \setminus Y_1^\circ$, and such that
$\nu_2$ (respectively, $\mu_2$) induces an isomorphism
$\nu_2^{-1}(Z_1 \setminus Z_1^\circ) \cong Z_1 \setminus Z_1^\circ$
(respectively, $\mu_2^{-1}(Y_1 \setminus Y_1^\circ) \cong Y_1 \setminus Y_1^\circ$).
Hence, $(\nu_2)_*(K_{Z_2/Y_2}+\Gamma_2)=K_{Z_1/Y_1}+\Gamma_1$.
From \eqref{eq:eq1} and \eqref{eq:eq2}, we conclude that 
$$(\nu\circ\nu_1\circ\nu_2)_*(K_{Z_2/Y_2}+\Gamma_2)=\deg(\nu_1)(K_\sH+L)$$
is pseudo-effective, completing the proof of the proposition.
\end{proof}

\begin{lemma}\label{lemma:reduced_fibers_codimension_1}
Let $\psi\colon Z \to Y$ be a dominant equidimensional morphism of normal varieties. There exists a finite surjective morphism
$Y_1 \to Y$ with $Y_1$ normal and connected satisfying the following property. If $Z_1$ denotes the normalization of the product $Y_1 \times_Y Z$, then the induced morphism $\psi_1\colon Z_1 \to Y_1$ has reduced fibers over codimension one points in $Y_1$. 
\end{lemma}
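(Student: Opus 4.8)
The plan is to remove the non-reduced fibres in codimension one by a suitable ramified base change, and to control the normalization of the fibre product via Abhyankar's lemma.

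First I would reformulate the conclusion. Since $Y_1 \to Y$ is finite, a codimension one point of $Y_1$ lies over a codimension one point of $Y$, and the scheme-theoretic fibre of $\psi_1$ over such a point is reduced if and only if every prime divisor $E$ on $Z_1$ dominating a prime divisor $D$ on $Y_1$ has ramification index $e(E/D)=1$. Moreover, because the statement only concerns codimension one points, I may freely replace $Y$ by any dense open subset $Y^\circ$ with $\codim (Y \setminus Y^\circ) \ge 2$: a finite cover of $Y^\circ$ extends to a finite cover of $Y$ by taking the normalization of $Y$ in the function field of the cover, and this does not affect the behaviour over codimension one points. In particular I may assume $Y$ is smooth.

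Next I would isolate the finitely many bad divisors. By generic smoothness (we are in characteristic zero), $\psi$ is smooth over a dense open subset $V \subseteq Y$; over the generic point of any prime divisor disjoint from $Y \setminus V$ the fibre is smooth, hence reduced. Thus only finitely many prime divisors $D_1,\dots,D_s$ of $Y$ can carry non-reduced fibres. For each $j$ I set $N_j$ to be the least common multiple of the ramification indices $e(E/D_j)$ taken over the prime divisors $E$ of $Z$ dominating $D_j$, and I put $N=\textup{lcm}(N_1,\dots,N_s)$.

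I would then construct the base change and verify the assertion. After removing a closed subset of codimension at least two I may assume each $D_j$ is Cartier, so that a Kawamata-type covering lemma produces a finite surjective morphism $Y_1 \to Y$, with $Y_1$ normal and connected, whose ramification index along each $D_j$ is divisible by $N_j$ (taking it equal to $N$ is convenient). The verification is then a local computation at the generic point $\eta$ of each $D_j$: the local ring $\cO_{Y,\eta}$ is a discrete valuation ring, each component $E$ over $D_j$ gives a tamely ramified extension of ramification index $e(E/D_j)\mid N$, and $Y_1 \to Y$ is tamely ramified of index divisible by $N$ along $D_j$; by Abhyankar's lemma the normalization of the fibre product is unramified over $\cO_{Y_1}$ along the preimage of $\eta$, so the corresponding fibre of $\psi_1$ is reduced. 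Over the remaining prime divisors $\psi$ is smooth, and smoothness is preserved by base change, so those fibres remain reduced after normalization. The main obstacle is the construction of a finite cover with prescribed ramification along the $D_j$ over a base that is a priori only normal, together with the Abhyankar computation; both are handled by passing to the smooth locus in codimension one, where the $D_j$ become Cartier and the classical covering and ramification results apply, and then extending the cover by normalization.
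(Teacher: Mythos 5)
Your proposal is correct in substance, but it takes a genuinely different route from the paper. The paper disposes of this lemma in one line, by citing the Reduced Fibre Theorem of Bosch--L\"utkebohmert--Raynaud \cite[Theorem 2.1']{bosch95} (see also \cite[Section 5]{abramovich_karu}). Your argument is instead the direct characteristic-zero proof of that statement: since all ramification is tame over $\mathbb{C}$, one lists the finitely many prime divisors $D_j$ of $Y$ over which the fibre is non-reduced, takes a cover whose ramification index along each $D_j$ is divisible by the least common multiple $N_j$ of the multiplicities $e(E/D_j)$, and concludes by Abhyankar's lemma that the normalized base change is unramified in codimension one over $Y_1$. What your approach buys is self-containedness and transparency; what the citation buys the paper is brevity and validity in settings (positive or mixed characteristic, wild ramification) where the lcm trick breaks down --- irrelevant here, but it explains why the paper can be so terse.

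There is one step of your write-up that is genuinely wrong as stated, though fixable. You invoke generic smoothness of $\psi$, but $Z$ is only normal: if $Z = S \times Y \to Y$ with $S$ a singular normal surface, the non-smooth locus of $\psi$ dominates $Y$, so there is no open $V \subseteq Y$ over which $\psi$ is smooth. What you actually need --- that only finitely many prime divisors of $Y$ carry non-reduced fibres --- remains true: the generic fibre of $\psi$ is reduced because $Z$ is normal, hence geometrically reduced in characteristic zero, and geometric reducedness of fibres holds over a dense open subset of $Y$ by constructibility (EGA IV, \S 9.7--9.9); alternatively, compute the multiplicities $e(E/D)$ on a resolution $\tilde{Z} \to Z$, where generic smoothness does apply, so that the bad divisors of $\tilde{Z}$ are among the finitely many codimension-one components of the non-smooth locus of $\tilde{Z}\to Y$. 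The same defect affects your last sentence (``smoothness is preserved by base change''); the clean repair is to observe that your Abhyankar computation, run with $e(E/D)=1$, which divides every ramification index of $Y_1 \to Y$, already yields reduced fibres over every divisor of $Y_1$ lying over a good divisor of $Y$ --- including those along which $Y_1 \to Y$ ramifies, such as the auxiliary branch divisors of a Kawamata cover, a case your formulation quietly skips. Two smaller caveats: Kawamata's covering lemma requires $Y$ quasi-projective, while the lemma is stated for arbitrary normal varieties, so in full generality you should use a Kummer cover instead (normalize $Y$ in $\mathbb{C}(Y)\big(f_1^{1/N},\dots,f_s^{1/N}\big)$ with $\textup{ord}_{D_j}(f_j)=1$); and Abhyankar's lemma is usually stated for finite extensions of discrete valuation rings, whereas $\mathcal{O}_{Z,\eta_E}$ is not finite over $\mathcal{O}_{Y,\eta_D}$, so the tame computation in this non-finite setting (split off an \'etale root of the unit, then reduce to an Eisenstein extension) should be spelled out rather than cited.
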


\begin{proof}
This follows easily from \cite[Theorem 2.1']{bosch95} (see also \cite[Section 5]{abramovich_karu}).  
\end{proof}

As a first application of Proposition \ref{prop:pseudo_effective}, we obtain
a generalization of \cite[Lemma 2.14]{hoering} and \cite[Theorem 2.11]{campana_paun}.

\begin{prop}\label{prop:pseudo_effective2}
Let $\phi\colon X \to Y$ be a dominant morphism of normal projective varieties with general fiber $F$.
Suppose that $Y$ is smooth, and that
$\phi$ has connected fibers.
Let $L$ be an effective $\mathbb{Q}$-divisor on $X$
such that $K_X+L$ is $\mathbb{Q}$-Cartier in a neighborhood of $F$.
Suppose furthermore that $(F,L_{|F})$ is log canonical, and that
$\kappa\big(F,K_F+L_{|F}\big) \ge 0$. 
Then $K_{X/Y}-R(\phi)+L$ is pseudo-effective, where $R(\phi)$ denotes the ramification divisor of $\phi$.
\end{prop}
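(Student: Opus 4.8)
The plan is to recognise the fibration $\phi$ as an algebraically integrable foliation and then invoke Proposition \ref{prop:pseudo_effective}. Let $\sH$ be the foliation on $X$ induced by $\phi$ (see \ref{pullback_foliations}). Since the general fibre $F$ of $\phi$ is algebraic, $\sH$ is algebraically integrable with general leaf $F$, and by \eqref{morphism_fol} one has $K_\sH = K_{X/Y}-R(\phi)$. Thus the assertion to be proved, pseudo-effectivity of $K_{X/Y}-R(\phi)+L$, is precisely pseudo-effectivity of $K_\sH+L$, which is what Proposition \ref{prop:pseudo_effective} delivers once its hypotheses are matched.

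The first point to settle is that the general log leaf of $\sH$ is $(F,0)$. Restricting $K_\sH=K_{X/Y}-R(\phi)$ to a general fibre $F$, the ramification divisor $R(\phi)$ is supported over the divisorial part of the critical locus in $Y$, hence is $\phi$-vertical and meets no general fibre, so $i^*R(\phi)=0$; moreover $\phi^*K_Y$ restricts to $0$ on $F$ and adjunction gives $K_{X/Y|F}=K_{X|F}=K_F$. Therefore $i^*K_\sH=K_F$, so in the defining relation $K_F+\Delta\sim i^*K_\sH$ of Definition \ref{log_leaf} the effective divisor $\Delta$ is $\sim 0$, whence $\Delta=0$. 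With this, the hypotheses ``$(F,L_{|F})$ log canonical'' and ``$\kappa(F,K_F+L_{|F})\ge 0$'' of the present statement are literally ``$(F,\Delta+L_{|F})$ log canonical'' and ``$\kappa(F,K_F+\Delta+L_{|F})\ge 0$'' as required in Proposition \ref{prop:pseudo_effective}.

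The main obstacle is that Proposition \ref{prop:pseudo_effective} asks for $\sH$ to be $\mathbb{Q}$-Gorenstein and for $L$ to be $\mathbb{Q}$-Cartier, whereas here I only assume $K_X+L$ (equivalently $K_\sH+L$) to be $\mathbb{Q}$-Cartier near $F$. To remedy this I would pass to a birational morphism $\pi\colon X'\to X$ which is a log resolution of $(X,L)$ over the generic point of $Y$, and work with $\phi'=\phi\circ\pi\colon X'\to Y$ and the induced foliation $\sH'$; since $X'$ is smooth, $\sH'$ is $\mathbb{Q}$-Gorenstein. The delicate choice is the boundary upstairs: setting $L':=\pi_*^{-1}L+\sum_j E_j$ (strict transform plus the reduced sum of $\pi$-exceptional divisors) keeps $L'$ effective and $\mathbb{Q}$-Cartier, makes $(F',L'_{|F'})$ log canonical for the general fibre $F'$ of $\phi'$, and, because $(F,L_{|F})$ is log canonical, gives $K_{F'}+L'_{|F'}=g^*(K_F+L_{|F})+\sum_j(1+a_j)E_{j|F'}$ with every $1+a_j\ge 0$ (here $g=\pi_{|F'}$ and $a_j$ is the discrepancy of $E_j$ over $(F,L_{|F})$). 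Since the correction term is effective and $g$-exceptional, $\kappa(F',K_{F'}+L'_{|F'})=\kappa(F,K_F+L_{|F})\ge 0$, and the restriction computation of the previous paragraph shows that the general log leaf of $\sH'$ is again $(F',0)$.

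Finally I would apply Proposition \ref{prop:pseudo_effective} to $(X',\sH',L')$, obtaining that $K_{\sH'}+L'$ is pseudo-effective on $X'$, and then push forward by $\pi$. Using $\pi_*K_{\sH'}=K_\sH$ (which follows from $\pi_*K_{X'/Y}=K_{X/Y}$ and $\pi_*R(\phi')=R(\phi)$, the $\pi$-exceptional contributions being annihilated) together with $\pi_*L'=L$, one gets $\pi_*(K_{\sH'}+L')=K_\sH+L=K_{X/Y}-R(\phi)+L$; as the push-forward of a pseudo-effective class under a birational morphism is again pseudo-effective, this completes the argument. The only genuinely technical points are the verification that the reduced-exceptional boundary preserves both log canonicity and the non-negativity of the Kodaira dimension on the general fibre, and the bookkeeping $\pi_*K_{\sH'}=K_\sH$; both are routine once the restriction identities for a general fibre are in hand.
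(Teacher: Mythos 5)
Your overall strategy coincides with the paper's proof: pass to a log resolution $\pi\colon X'\to X$ of $(X,L)$, equip $X'$ with an effective $\mathbb{Q}$-Cartier boundary whose restriction to a general fibre is log canonical with unchanged log Kodaira dimension, apply Proposition \ref{prop:pseudo_effective} to the foliation $\sH'$ induced by $\phi'=\phi\circ\pi$, and push forward. The choice of boundary differs cosmetically (the paper takes $L_1$ defined by $K_{X_1}+L_1=\nu^*(K_X+L)+E$ with $L_1,E$ effective and without common components, you take the strict transform of $L$ plus the reduced $\pi$-exceptional divisor), but your fibre-level verifications are correct: coefficients $\le 1$ and simple normal crossings give log canonicity, the added terms are effective and exceptional over $F$ so $\kappa$ is preserved, and the general log leaf is indeed $(F',0)$ — these computations only use \eqref{morphism_fol} near a general fibre, where $\phi'$ is equidimensional and the formula is valid.

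The gap is the global identity $K_\sH=K_{X/Y}-R(\phi)$ (and its analogue $K_{\sH'}=K_{X'/Y}-R(\phi')$ on $X'$), on which both your initial reduction and your final bookkeeping rest. Formula \eqref{morphism_fol} holds only over the locus where the fibration is equidimensional; the complement of that locus has codimension $\ge 2$ in $Y$, but its preimage can contain divisors of $X$ or of $X'$, and there the two sides differ. Concretely, let $X$ be the blow-up of $Y\times\mathbb{P}^1$ along $\{y_0\}\times\mathbb{P}^1$ with $\dim Y\ge 2$ and exceptional divisor $E$: then $R(\phi)=0$ while $K_{X/Y}=K_\sH+E$. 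In general one only has $K_\sH=K_{X/Y}-R(\phi)-G$ and $K_{\sH'}=K_{X'/Y}-R(\phi')-G'$ with $G,G'$ effective divisors supported over the non-equidimensional locus; this effectivity is exactly what the paper extracts from the exact sequence $0\to\sH'\to T_{X'}\to(\phi')^*T_Y$, and it is what closes the argument: the pushforward identity $\pi_*(K_{\sH'}+L')=K_\sH+L$ is true (though not for the reason you give — the two sides simply agree in codimension one on $X$, since $\pi$ is an isomorphism outside a set of codimension $\ge 2$), and the target satisfies $K_{X/Y}-R(\phi)+L=(K_\sH+L)+G$, so pseudo-effectivity passes because $G$ is effective. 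As written, your proof asserts an equality that can fail and never establishes the effectivity of the correction term, which is precisely the point the paper's proof takes care to supply; with that one observation added, your argument is complete.
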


\begin{proof}
Let $\nu\colon X_1 \to X$ be a log resolution of $(X,L)$, and set $\phi_1:=\phi\circ\nu\colon X_1 \to Y$.
Let $Y^\circ\subset Y$ be a dense open subset such that $K_{X^\circ}+L_{|X^\circ}$
is $\mathbb{Q}$-Cartier, where $X^\circ:=\phi^{-1}(Y^\circ)$.
Set $X_1^\circ:=\nu^{-1}(X^\circ)$.
We write 
\begin{equation}\label{eq:log2}
K_{X_1^\circ}+L_1^\circ=(\nu_{|X_1^\circ})^*(K_{X^\circ}+{L_{|X^\circ}})+E^\circ
\end{equation}
where $L_1^\circ$ and $E^\circ$ are effective, with no common components, 
${(\nu_{|X_1^\circ})_*}L_1^\circ=L_{|X^\circ}$, and $E^\circ$ is $\nu$-exceptional. 
Denote by $L_1$ the Zariski closure of $L_1^\circ$ in $X_1$. Note that
$\nu_*L_1=L$.
Let $F_1$ denotes a general fiber of $\phi_1$.
Then 
$(F_1,{L_1}_{|F_1})$ has log canonical singularities, 
and $\kappa\big(F_1,K_{F_1}+{L_1}_{|F_1}\big)=\kappa\big(F,K_{F}+{L}_{|F}\big) \ge 0$
by \eqref{eq:log2}.
By Proposition \ref{prop:pseudo_effective} applied to the foliation 
$\sH_1$ on $X_1$ induced by $\phi_1$ and $L=L_1$, we conclude that 
$K_{\sH_1}+L_1$ is pseudo-effective.
There is an exact sequence
$$0 \to \sH_1 \to T_{X_1} \to \phi_1^*T_Y,$$
and thus $K_{\sH_1}=K_{X_1/Y}-R(\phi_1)-G$, where 
$R(\phi_1)$ denotes the ramification divisor of $\phi_1$ 
and $G$ is an effective divisor.
This in turn implies that $K_{X/Y}-R(\phi)+L$ is pseudo-effective 
since $K_{X/Y}-R(\phi) -\nu_*\big(K_{X_1/Y}-R(\phi_1)-G\big)=\nu_*G$ is effective.
\end{proof}

\begin{rem}
It should be noted that Proposition \ref{prop:pseudo_effective2} does not require 
$K_{X/Y}-R(\phi)+L$ to be $\mathbb{Q}$-Cartier. 
\end{rem}

\begin{cor}\label{cor:pseudo_effective2}
Let $\psi\colon Z \to Y$ be a dominant morphism of normal projective varieties with general fiber $F$.
Suppose that $\psi$ is equidimensional with connected fibers.
Let $L$ be an effective $\mathbb{Q}$-divisor on $Z$
such that $K_Z+L$ is $\mathbb{Q}$-Cartier in a neighborhood of $F$.
Suppose furthermore that $(F,L_{|F})$ is log canonical, and that
$\kappa\big(F,K_F+L_{|F}\big) \ge 0$. 
Then $K_{Z/Y}-R(\psi)+L$ is pseudo-effective, where $R(\psi)$ denotes the ramification divisor of $\psi$.
\end{cor}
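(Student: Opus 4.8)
The plan is to reduce the statement to Proposition \ref{prop:pseudo_effective2} by resolving the singularities of the base $Y$, and then to use the equidimensionality of $\psi$ to control the correction terms when pushing the resulting pseudo-effective class back down to $Z$.

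First I would choose a resolution of singularities $\mu\colon Y'\to Y$, so that $Y'$ is smooth and $\mu$ is birational. Let $Z'$ be a resolution of singularities of the main component of $Z\times_Y Y'$, chosen so that the induced birational morphism $\nu\colon Z'\to Z$ is moreover a log resolution of $(Z,L)$ over the generic point of $Y$, and denote by $\psi'\colon Z'\to Y'$ the induced morphism, so that $\mu\circ\psi'=\psi\circ\nu$. Replacing $\psi'$ by its Stein factorization and using that $Y'$ is normal while the general fiber of $\psi'$ is birational to the (connected) general fiber $F$ of $\psi$, I may assume that $\psi'$ has connected fibers. If $F'$ denotes a general fiber of $\psi'$, then $\nu$ restricts to a birational morphism $F'\to F$ that is a log resolution of $(F,L_{|F})$.

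Next, over the dense open subset $Y^\circ\subseteq Y$ where $K_Z+L$ is $\mathbb{Q}$-Cartier (which contains a neighborhood of $F$ by assumption), I would write
\[
K_{Z'}+L'=\nu^*(K_Z+L)+E,
\]
where $L'$ and $E$ are effective with no common component, $\nu_*L'=L$, and $E$ is $\nu$-exceptional, extending $L'$ and $E$ to $Z'$ by taking Zariski closures. Exactly as in the proof of Proposition \ref{prop:pseudo_effective2}, the hypotheses that $(F,L_{|F})$ is log canonical and that $\kappa(F,K_F+L_{|F})\ge 0$ then yield that $(F',L'_{|F'})$ is log canonical and that $\kappa(F',K_{F'}+L'_{|F'})\ge 0$. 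Since $Z'$ and $Y'$ are smooth and $\psi'$ has connected fibers, Proposition \ref{prop:pseudo_effective2} applies to $\psi'$ and $L'$ and shows that $K_{Z'/Y'}-R(\psi')+L'$ is pseudo-effective. As $\nu$ is birational, the pushforward $\nu_*\big(K_{Z'/Y'}-R(\psi')+L'\big)$ is again pseudo-effective.

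It then remains to identify this pushforward with $K_{Z/Y}-R(\psi)+L$. Let $\sH$ (respectively $\sH'$) be the foliation induced by $\psi$ on $Z$ (respectively by $\psi'$ on $Z'$); since $\nu$ is birational we have $\sH'=\nu^{-1}\sH$, and hence $\nu_*K_{\sH'}=K_\sH$. Because $\psi$ is equidimensional, \eqref{morphism_fol} gives $K_\sH=K_{Z/Y}-R(\psi)$, while for $\psi'$ the exact sequence $0\to\sH'\to T_{Z'}\to(\psi')^*T_{Y'}$ gives, as in the proof of Proposition \ref{prop:pseudo_effective2}, an equality $K_{\sH'}=K_{Z'/Y'}-R(\psi')-G'$ with $G'$ effective and supported on the locus where $\psi'$ fails to be equidimensional. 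The key point is that this locus is $\nu$-exceptional: a fiber of $\psi'$ of dimension larger than $\dim F'=\dim F$ is mapped by $\nu$ into a fiber of the equidimensional morphism $\psi$, which has dimension exactly $\dim F$, so $\nu$ drops dimensions along it. Consequently $\nu_*G'=0$, and pushing forward the equality for $K_{\sH'}$ yields
\[
\nu_*\big(K_{Z'/Y'}-R(\psi')\big)=\nu_*K_{\sH'}=K_\sH=K_{Z/Y}-R(\psi).
\]
Combined with $\nu_*L'=L$, this gives $\nu_*\big(K_{Z'/Y'}-R(\psi')+L'\big)=K_{Z/Y}-R(\psi)+L$, which is therefore pseudo-effective. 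The delicate point throughout is precisely this verification that $G'$ is $\nu$-exceptional, i.e. that the equidimensionality of $\psi$ forces every excess-dimensional fiber of $\psi'$ into the exceptional locus of $\nu$; this is exactly what prevents a negative contribution from surviving the pushforward and makes the signs work out.
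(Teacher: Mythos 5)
Your proof is correct, and its skeleton is the same as the paper's: resolve the base $Y$, apply Proposition \ref{prop:pseudo_effective2} upstairs, push the resulting pseudo-effective class back down to $Z$, and use the equidimensionality of $\psi$ to see that nothing is lost under pushforward. The genuine difference is the choice of model upstairs. The paper takes $Z_1$ to be merely the \emph{normalization} of $Y_1\times_Y Z$ (no resolution of $Z$ at all); since base change along $Y_1\to Y$ and normalization preserve fiber dimensions, $\psi_1\colon Z_1\to Y_1$ remains equidimensional, so no correction divisor ever appears, and the identification $(\nu_1)_*\big(K_{Z_1/Y_1}-R(\psi_1)+\nu_1^*L\big)=K_{Z/Y}-R(\psi)+L$ is immediate because $\nu_1$ and $\mu_1$ are isomorphisms away from sets of codimension $\ge 2$ whose $\psi$-preimages stay of codimension $\ge 2$. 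By passing instead to a smooth $Z'$, you destroy equidimensionality of $\psi'$ and must then prove that the effective divisor $G'$ is $\nu$-exceptional; your dimension count (components of $G'$ lie over codimension $\ge 2$ subsets of $Y'$, and $\psi^{-1}$ of a codimension $\ge 2$ subset of $Y$ has codimension $\ge 2$ in $Z$ by equidimensionality) is exactly right, and you correctly identified this as the point where the argument could break: without $\nu_*G'=0$ you would only obtain pseudo-effectivity of $K_{Z/Y}-R(\psi)+L+\nu_*G'$, which does not suffice. What your variant buys is a cleaner treatment of $L$: since $K_Z+L$ is only $\mathbb{Q}$-Cartier near $F$, the paper's pullback $\nu_1^*L$ really only has unambiguous meaning near the general fiber, whereas your log pullback $L'$ is well defined on the smooth $Z'$; what it costs is precisely the extra argument about $G'$.
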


\begin{proof}
Let $\mu_1\colon Y_1 \to Y$ be a resolution of singularities, and let $Z_1$ be the normalization of the
product $Y_1 \times_Y Z$, with natural morphisms $\psi_1\colon Z_1 \to Y_1$ and $\nu_1\colon Z_1\to Z$.
Apply Proposition \ref{prop:pseudo_effective2} to $\psi_1$ and $\nu_1^*L$. We conclude that 
$K_{Z_1/Y_1}-R(\psi_1)+\nu_1^*L$ is pseudo-effective, where $R(\psi_1)$ denotes the ramification divisor of $\psi_1$. It follows that $(\nu_1)_*(K_{Z_1/Y_1}-R(\psi_1)+\nu_1^*L)$ is pseudo-effective as well.
Since
$\psi$ is equidimensional, there exist dense open subsets
$Y^\circ\subset Y$ and $Z^\circ \subset Z$ with 
$\codim \, Y \setminus Y^\circ \ge 2$ and $\codim \, Z \setminus Z^\circ \ge 2$
such that $\psi(Z \setminus Z^\circ)\subset Y \setminus Y^\circ$, and such that
$\nu_1$ (respectively, $\mu_1$) induces an isomorphism
$\nu_1^{-1}(Z \setminus Z^\circ) \cong Z \setminus Z^\circ$
(respectively, $\mu_1^{-1}(Y \setminus Y^\circ) \cong Y \setminus Y^\circ$).
Hence, $(\nu_1)_*\big(K_{Z_1/Y_1}-R(\psi_1)+\nu_1^*L\big)=K_{Z/Y}-R(\psi)+L$, where $R(\psi)$ denotes the ramification divisor of $\psi$. This completes the proof of the Corollary \ref{cor:pseudo_effective2}.
\end{proof}

Next, we obtain a generalization
of \cite[Proposition 5.8]{fano_fols}.

\begin{prop}\label{proposition:thm_alg_int}
Let $X$ be a normal projective variety, let $\sH$ a $\mathbb{Q}$-Gorenstein algebraically integrable 
foliation on $X$, and let $(F,\Delta)$ be its general log leaf. 
Suppose that $-K_\sH=A+E$ where 
$A$ is a $\mathbb{Q}$-ample $\mathbb{Q}$-divisor and
$E$ is an effective $\mathbb{Q}$-divisor.
Then $(F,\Delta+E_{|F})$ is not klt.
\end{prop}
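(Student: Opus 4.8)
The plan is to argue by contradiction: I assume $(F,\Delta+E_{|F})$ is klt and try to produce a nonzero anti-ample class that is pseudo-effective, contradicting the ampleness of $A$. The natural arena is the family of leaves. Passing to $\psi\colon Z\to Y$ and the birational morphism $\nu\colon Z\to X$ of \ref{family_leaves}, one has $K_{\sF_Z}+B=K_{Z/Y}-R(\psi)+B\sim_{\mathbb Q}\nu^*K_\sH$ with $B$ effective and $\nu$-exceptional, and the general fibre of $\psi$ recovers the general log leaf, $(Z_y,B_{|Z_y})=(F,\Delta)$. Writing $\Gamma:=B+\nu^*E$, the general fibre pair is exactly $(F,\Delta+E_{|F})$, which is klt by hypothesis, and $K_{\sF_Z}+\Gamma\sim_{\mathbb Q}-\nu^*A$; in particular on a general fibre $-(K_F+\Gamma_{|F})=(\nu^*A)_{|F}=i^*A$ is ample. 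Thus the fibre pair is a \emph{log Fano} pair, which is precisely what makes the statement delicate: klt log Fano pairs certainly exist, so non-klt must be forced by the global foliated structure and by the size of $-K_\sH$.

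I would then feed this into the weak-positivity machinery. Since the fibre is klt and $A$ is ample, the idea is to enlarge $\Gamma$ to an effective $\Gamma'$ on $Z$ for which the general fibre pair $(F,\Gamma'_{|F})$ is log canonical with $\kappa(F,K_F+\Gamma'_{|F})\ge 0$, and then to apply Corollary \ref{cor:pseudo_effective2} to the equidimensional morphism $\psi$ (after the finite base change of Lemma \ref{lemma:reduced_fibers_codimension_1} and a log resolution, exactly as in the proof of Proposition \ref{prop:pseudo_effective}). The corollary yields that $K_{Z/Y}-R(\psi)+\Gamma'=K_{\sF_Z}+\Gamma'$ is pseudo-effective, and pushing forward by the birational morphism $\nu$ turns this (up to the positive multiple coming from the base change, as in Proposition \ref{prop:pseudo_effective}) into pseudo-effectivity of $K_\sH+\nu_*(\Gamma'-\Gamma)=-A+\nu_*(\Gamma'-\Gamma)$. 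The target is to choose $\Gamma'$ so that $\nu_*(\Gamma'-\Gamma)$ is strictly smaller than $A$, which would make a nonzero anti-ample class pseudo-effective and finish the argument.

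The main obstacle — indeed the whole weight of the proof — is the tension between these two requirements. Raising the Kodaira dimension of $K_F+\Gamma_{|F}\sim_{\mathbb Q}-i^*A$ to $0$ forces the added boundary to absorb the \emph{full} ample class $i^*A$ along the general fibre; done naively (by adding a general $A_0\sim_{\mathbb Q}A$) this makes $\nu_*(\Gamma'-\Gamma)\sim_{\mathbb Q}A$, so the class produced is numerically trivial rather than anti-ample, and no contradiction results. Escaping this requires using that $-K_\sH$ is \emph{big}, not merely nef: the fibrewise positivity one adds must come from a class on $Z$ whose $\nu$-pushforward stays strictly below $A$, that is, from $\nu$-exceptional and $\psi$-vertical contributions that are invisible in the numerical class of a general fibre yet account for part of the bigness of $-\nu^*A$ on $Z$. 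Making this vertical/horizontal bookkeeping precise, and verifying that the enlarged pair stays log canonical on the general fibre, is the crux; this is exactly where the klt hypothesis enters, through the openness of the klt condition supplying the room to enlarge the boundary, and it is the reason the conclusion is ``not klt'' rather than merely ``not lc''.
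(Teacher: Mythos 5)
You have the right arena and the right diagnosis, but your proposal stops precisely where the proof has to happen. The setup (family of leaves, $\Gamma=B+\nu^*E$ with $K_{\sF_Z}+\Gamma\sim_{\mathbb{Q}}-\nu^*A$, the klt fibre pair), the observation that adding a general member of $\nu^*A$ only produces a numerically trivial pseudo-effective class, and the intuition that the escape must come from classes invisible on the general fibre are all correct. But you then declare that making this precise ``is the crux'' without producing the class or checking that the hypotheses of the weak-positivity statement survive; that deferred step \emph{is} the proof, so as written there is a genuine gap. Moreover, your guiding heuristic --- that the added positivity must have $\nu$-pushforward strictly below $A$, accounted for by $\nu$-exceptional contributions to the bigness of $\nu^*A$ --- points in a slightly wrong direction: in the actual argument the exceptional-type part of $\nu^*A$ is \emph{discarded} (absorbed into the boundary), not used as positivity, and no pushforward to $X$ or comparison with $A$ is ever needed.

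Here is the missing construction, which is the entirety of the paper's proof. Since $\nu^*A$ is nef and big but not ample on $Z$, first restore ampleness: by Kodaira's lemma write $\nu^*A=H+N$ with $H$ $\mathbb{Q}$-ample and $N$ effective; by openness of the klt condition, $(F,\Delta+E_{|F}+\varepsilon N_{|F})$ is still klt for $0<\varepsilon\ll 1$, and $H_\varepsilon:=\nu^*A-\varepsilon N=(1-\varepsilon)\nu^*A+\varepsilon H$ is $\mathbb{Q}$-ample. Now perturb vertically: choose a nonzero effective $\mathbb{Q}$-Cartier divisor $D$ on $Y$ so small that $H'_\varepsilon:=H_\varepsilon-\psi^*D$ is still $\mathbb{Q}$-ample. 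Since $\psi^*D$ restricts to zero on a general fibre, $({H'_\varepsilon}+\varepsilon N)_{|F}=(\nu^*A)_{|F}$, hence $K_F+\Delta+E_{|F}+{H'_\varepsilon}_{|F}+\varepsilon N_{|F}\sim_{\mathbb{Q}}0$, and for a general effective representative of the ample class $H'_\varepsilon$ the pair $(F,\Delta+E_{|F}+{H'_\varepsilon}_{|F}+\varepsilon N_{|F})$ is log canonical. Proposition \ref{prop:pseudo_effective}, applied on $Z$ to the foliation $\sH_Z$ induced by $\psi$ with $L:=B+\nu^*E+H'_\varepsilon+\varepsilon N$, then says that $K_{\sH_Z}+B+\nu^*E+H'_\varepsilon+\varepsilon N$ is pseudo-effective; but this class is $\sim_{\mathbb{Q}}-\nu^*A+H'_\varepsilon+\varepsilon N=-\psi^*D$, the negative of a nonzero effective divisor, which is never pseudo-effective. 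The contradiction is thus reached directly on $Z$, and the ``vertical/horizontal bookkeeping'' you anticipated never arises: the whole point of the two-step perturbation (Kodaira decomposition plus subtraction of $\psi^*D$) is that it changes nothing on the general fibre while forcing strict negativity globally.
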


\begin{proof}We argue by contradiction, and assume that $(F,\Delta+E_{|F})$ is klt.
Let $\psi\colon Z\to Y$ be the family of leaves, and let $\nu\colon Z \to X$ be the natural morphism (see \ref{family_leaves}). There is a canonically defined effective $\mathbb{Q}$-Weil divisor $B$ on $Z$ such that 
$$
K_{Z/Y}-R(\psi)+B\sim_\bQ \nu^* K_\sH
$$
where $R(\psi)$ denotes the ramification divisor of $\psi$. 
We view $F$ as a (general) fiber of $\psi$.
Recall that $B_{|F}=\Delta$, and that 
$K_{Z/Y}-R(\psi)$ is a canonical divisor of the foliation $\sH_Z$ on $Z$ induced by $\sH$ (or $\psi$).
We write $\nu^*A=H+N$ where $H$ is a $\mathbb{Q}$-ample $\mathbb{Q}$-divisor and $N$ is an effective 
$\mathbb{Q}$-divisor. Let $0<\varepsilon \ll 1$ be a rational number such that 
$(F,\Delta+E_{|F}+\varepsilon N_{|F})$ is klt, and set $H_\varepsilon=\nu^*A-\varepsilon N$.
Note that $H_\varepsilon$ is $\mathbb{Q}$-ample.
Finally, let $D\neq 0$ be an effective $\mathbb{Q}$-Cartier $\mathbb{Q}$-divisor on $Y$, such that 
$H'_\varepsilon:=H_\varepsilon-\psi^*D$ is $\mathbb{Q}$-ample. 
Then $K_F+\Delta+E_{|F}+{H'_\varepsilon}_{|F}+\varepsilon N_{|F}\sim_\bQ 0$.
Now, apply Proposition \ref{prop:pseudo_effective} to conclude 
that $K_{\sH_Z}+B+\nu^*E+H'_\varepsilon+\varepsilon N$ is pseudo-effective.
But 
$$K_{\sH_Z}+B+\nu^*E+H'_\varepsilon+\varepsilon N\sim_\mathbb{Q}-\psi^*D,$$
yielding a contradcition.
\end{proof}

\begin{rem}Note that Proposition \ref{proposition:thm_alg_int} also follows Proposition \ref{proposition:lc_center}. We decided to include a proof to keep our exposition as self-contained as possible.
\end{rem}

\section{Singularities of foliations}

In this section, we provide another application of Proposition \ref{prop:pseudo_effective}: we study singularities of algebraically integrable foliations (see Proposition \ref{prop:singularities_foliations}). First, we recall the definition of terminal and canonical singularities, inspired by the theory of singularities of pairs, developed in the context of the minimal model program.

\begin{say}[{\cite[Definition I.1.2]{mcquillan08}}]
Let $\sF$ be a foliation on a normal variety $X$.
Given a birational morphism $\nu_1\colon X_1 \to X$ of normal varieties,
there is a unique foliation $\sF_1$ on $X_1$ that agrees with $\nu_1^*\sF$ on the open 
subset of $X_1$ where $\nu_1$ is an isomorphism. Let $B=\sum_{i\in I}a_i B_i$ be a 
(not necessarily effective) 
$\mathbb{Q}$-divisor on $X$.
Suppose that $B_i$ is not invariant by $\sF$ for any $i\in I$. 

Suppose moreover that $K_\sF+B$ is $\bQ$-Cartier and that $\nu_1$ is projective.
Then there are uniquely defined rational numbers $a_E(X,\sF,B)$'s such that
$$
K_{\sF_1}+B_1=\nu_1^*(K_{\sF}+B)+ \sum_E a_E(X,\sF,B)E,
$$
where $E$ runs through all exceptional prime divisors for $\nu_1$, and where $B_1$ denotes the proper transform of $B$ in $X_1$.
The $a_E(X,\sF,B)$'s do not depend on the birational morphism $\nu_1$,
but only on the valuations associated to the $E$'s.

We say that $(\sF,B)$ is \emph{terminal} (respectively, \emph{canonical}) if
$a_E(X,\sF,B) > 0$ (respectively, $a_E(X,\sF,B)\ge 0$) 
for all $E$ exceptional over $X$.

We say that $\sF$ is \emph{terminal} (respectively, \emph{canoncial}) if so is $(\sF,0)$.
\end{say}

\begin{rem}\label{remark:regular_canonical}
A regular foliation on a smooth variety is canonical by \cite[Lemma 3.10]{fano_fols}. 
\end{rem}

The following example shows that a regular foliation on a smooth variety may not be (log) terminal.

\begin{exmp}
Let $F$ and $T$ be smooth varieties with $\dim F \ge 1$, and $\dim T \ge 2$. Let $\sF$ be the foliation 
on $X:=F\times T$ induced by the projection $X \to T$. Fix a point $t\in T$, and let 
$T_1$ be the blow-up of $T$ at $t$. Set $X_1:=F \times T_1$, with natural morphism $\nu_1\colon X_1 \to X$. 
Note that
$\nu_1\colon X_1 \to X$ is the blow-up of $X$ along $F \times \{t\} \subset F \times T$.
Denote by $E$ its exceptional set. Observe that $E$ is invariant by $\sF_1$.
The foliation $\sF_1$ induced by $\sF$
on $X_1$ is given by the natural morphism $X_1 \to T_1$. We have 
$\sF_1 = \nu_1^*\sF$, and hence $a_E(X,\sF,0)=0$. 
\end{exmp}

The following result is probably well-known to experts, though as far as we can see, the statement does not appear in the literature.

\begin{lemma}\label{lemma:canonical_singularities}
Let $\psi\colon Z \to Y$ be a dominant equidimensional morphism of normal varieties, and 
denote by $\sH$ the foliation on $Z$ induced by
$\psi$. Suppose that $Y$ is smooth. 
Let $B=\sum_{i\in I}a_i B_i$ be a $\mathbb{Q}$-Cartier $\mathbb{Q}$-divisor on $Z$. Suppose that 
$B_i$ is not invariant by $\sH$ for every $i\in I$. Suppose furthermore that
$(\sH,B)$ is terminal (respectively, canonical). Then $\big(Z,-R(\psi)+B\big)$ has terminal singularities (respectively, canonical singularities).
\end{lemma}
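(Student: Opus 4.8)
The plan is to compare, divisor by divisor, the foliated discrepancies $a_E(Z,\sH,B)$ with the ordinary discrepancies $a_E\big(Z,-R(\psi)+B\big)$, using the identity \eqref{morphism_fol}. Since $\psi$ is equidimensional and $Y$ is smooth, that identity gives $K_\sH=K_{Z/Y}-R(\psi)=K_Z-\psi^*K_Y-R(\psi)$, so that
$$K_\sH+B=\big(K_Z-R(\psi)+B\big)-\psi^*K_Y.$$
As $Y$ is smooth, $\psi^*K_Y$ is Cartier; hence $K_Z-R(\psi)+B$ is $\bQ$-Cartier, $\big(Z,-R(\psi)+B\big)$ is a genuine pair, and the two boundaries differ exactly by the pull-back of $K_Y$. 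This is the observation that will drive the whole comparison.

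Next I would reduce to a single birational model. By \cite[Corollary 2.32]{kollar_mori} it suffices to exhibit one log resolution $\nu_1\colon Z_1\to Z$ of $\big(Z,-R(\psi)+B\big)$ and to check that $a_E\big(Z,-R(\psi)+B\big)\ge 0$ (respectively $>0$) for every $\nu_1$-exceptional prime divisor $E$ on $Z_1$. The point is to choose $\nu_1$ compatibly with $\psi$: using flattening together with equidimensional reduction (see \cite{abramovich_karu}), I would arrange a birational morphism $\mu\colon Y_1\to Y$ from a smooth variety, a log resolution $\nu_1\colon Z_1\to Z$ of $\big(Z,-R(\psi)+B\big)$, and an \emph{equidimensional} morphism $\psi_1\colon Z_1\to Y_1$ with $\mu\circ\psi_1=\psi\circ\nu_1$. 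The transform $\sH_1$ of $\sH$ on $Z_1$ (in the sense used to define $a_E$) has the strict transforms of the fibres of $\psi$ as its general leaves, hence coincides with the foliation induced by $\psi_1$; since $\psi_1$ is equidimensional and $Y_1$ is smooth, \eqref{morphism_fol} yields the global equality of Weil divisors $K_{\sH_1}=K_{Z_1/Y_1}-R(\psi_1)$. It is essential that $\psi_1$ be equidimensional \emph{everywhere}: \eqref{morphism_fol} is established only over the equidimensional locus, and would fail along any $\nu_1$-exceptional divisor that $\psi_1$ contracts to codimension $\ge 2$.

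With such a model in hand the computation is a direct comparison of the two defining relations. For a $\nu_1$-exceptional prime divisor $E$ the strict transform of $B$ (and of $-R(\psi)$) does not contain $E$, so
$$a_E(Z,\sH,B)=\mult_E\big(K_{\sH_1}-\nu_1^*(K_\sH+B)\big),\qquad a_E\big(Z,-R(\psi)+B\big)=\mult_E\big(K_{Z_1}-\nu_1^*(K_Z-R(\psi)+B)\big).$$
Subtracting, and using $\big(K_Z-R(\psi)+B\big)-\big(K_\sH+B\big)=\psi^*K_Y$ together with $K_{Z_1}-K_{\sH_1}=\psi_1^*K_{Y_1}+R(\psi_1)$ and $\nu_1^*\psi^*K_Y=\psi_1^*\mu^*K_Y$, I obtain
$$a_E\big(Z,-R(\psi)+B\big)-a_E(Z,\sH,B)=\mult_E\Big(\psi_1^*\big(K_{Y_1}-\mu^*K_Y\big)+R(\psi_1)\Big)=\mult_E\big(\psi_1^*K_{Y_1/Y}+R(\psi_1)\big).$$
Here $K_{Y_1/Y}$ is effective because $Y$ is smooth and $\mu$ is birational, $\psi_1^*K_{Y_1/Y}$ is effective because $\psi_1$ is equidimensional, and $R(\psi_1)\ge 0$. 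Thus the right-hand side is nonnegative, so $a_E\big(Z,-R(\psi)+B\big)\ge a_E(Z,\sH,B)$ for every such $E$. The hypothesis that $(\sH,B)$ is canonical (respectively terminal) then forces $a_E\big(Z,-R(\psi)+B\big)\ge 0$ (respectively $>0$), and by the reduction above this proves that $\big(Z,-R(\psi)+B\big)$ is canonical (respectively terminal).

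I expect the main obstacle to be the construction of the good model: one must \emph{simultaneously} make $\psi_1$ equidimensional over a smooth base and make $\nu_1$ a log resolution of $\big(Z,-R(\psi)+B\big)$, taking care that the resolution does not destroy the equidimensionality of $\psi_1$ (which is exactly what guarantees the global validity of $K_{\sH_1}=K_{Z_1/Y_1}-R(\psi_1)$). This is where flattening and equidimensional reduction enter. Once that model is available, the passage to a single resolution via \cite{kollar_mori} and the sign bookkeeping in the discrepancy comparison are routine.
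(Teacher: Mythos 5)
Your discrepancy bookkeeping is correct and is, in substance, the same computation the paper performs; the gap is the model on which you run it. Your argument needs a single $Z_1$ that is \emph{simultaneously} a log resolution of $\big(Z,-R(\psi)+B\big)$ (so that \cite[Corollary 2.32]{kollar_mori} applies) and equidimensional over a smooth modification $Y_1\to Y$ (so that \eqref{morphism_fol} gives the exact equality $K_{\sH_1}=K_{Z_1/Y_1}-R(\psi_1)$). Such a model is not available in general, and this is not a routine technicality: flattening makes the strict transform of $Z$ equidimensional over a blow-up of $Y$, but that strict transform is then singular, and resolving it re-introduces exceptional divisors that $\psi_1$ contracts to codimension $\ge 2$ in $Y_1$ --- exactly the failure mode you yourself flag as fatal for \eqref{morphism_fol}. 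What \cite{abramovich_karu} actually provide (weak semistable reduction) is an equidimensional morphism whose total space is toroidal and in general \emph{singular}; upgrading to a smooth total space while preserving equidimensionality is the strong form of semistable reduction, which is not known and not expected in this generality (note also that those results concern proper morphisms, whereas $\psi$ here is merely dominant and equidimensional between normal varieties). On a singular model you cannot invoke \cite[Corollary 2.32]{kollar_mori}, so the reduction step of your proof collapses.

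The paper's proof shows that the model construction is unnecessary. Take an arbitrary projective resolution $\nu_1\colon Z_1\to Z$ and simply compose, $\psi_1:=\psi\circ\nu_1\colon Z_1\to Y$, accepting that $\psi_1$ is not equidimensional. Since $Y$ is smooth, $\psi_1^*T_Y$ is locally free, and the exact sequence $0\to\sH_1\to T_{Z_1}\to\psi_1^*T_Y$ gives, on taking determinants, a one-sided substitute for \eqref{morphism_fol}:
$$K_{\sH_1}=K_{Z_1/Y}-R(\psi)_1-G,$$
where $R(\psi)_1$ is the strict transform of $R(\psi)$ and $G$ is an effective $\nu_1$-exceptional $\mathbb{Q}$-divisor (its non-exceptional part must agree with $R(\psi)$ because \eqref{morphism_fol} does hold on $Z$, where $\psi$ \emph{is} equidimensional). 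Feeding this into your subtraction --- the terms $\psi_1^*K_Y=\nu_1^*\psi^*K_Y$ cancel exactly as in your computation --- yields
$$a_E\big(Z,-R(\psi)+B\big)-a_E(Z,\sH,B)=\mult_E G\ \ge\ 0$$
for every $\nu_1$-exceptional prime divisor $E$, and since every divisorial valuation over $Z$ is realized on some resolution, the terminal (respectively canonical) conclusion follows. In short: replace the equality you wanted, which forces an impossible model, by the effectivity of the defect divisor $G$, which holds on \emph{every} resolution; your sign argument then goes through verbatim.
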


\begin{proof}
By \ref{pullback_foliations}, we have $K_\sH=K_{Z/Y}-R(\psi)$, and thus
$K_{Z}-R(\psi)$ is $\mathbb{Q}$-Cartier.
Let $\nu_1\colon Z_1 \to Z$ be a resolution of singularities, and set 
$\psi_1:=\psi \circ \nu_1$. Denote by
$\sH_1$ the foliation induced by $\sH$ (or $\psi_1$) on $Z_1$. Write
$K_{\sH_1}+B_1=\nu^*(K_\sH+B)+E$ where $E$ is a $\nu$-exceptional $\mathbb{Q}$-divisor, and $B_1$ denotes the proper transform of $B$ in $Z_1$. There is an exact sequence
$$0 \to \sH_1 \to T_{Z_1} \to \psi_1^*T_Y,$$
and thus $K_{\sH_1}=K_{Z_1/Y}-R(\psi)_{1}-G$, where 
$R(\psi)_{1}$  denotes the proper transform of $R(\psi)$ in $Z_1$ 
and $G$ is an effective $\nu$-exceptional $\mathbb{Q}$-divisor.
We obtain $$K_{Z_1}-R(\psi)_{1}+B_1=\nu_1^*\big(K_Z-R(\psi)+B\big)+E+G.$$
The lemma follows easily.
\end{proof}

\begin{prop}\label{prop:singularities_foliations}
Let $X$ be a normal projective variety, let $\sH$ be a $\mathbb{Q}$-Gorenstein algebraically integrable 
foliation on $X$, and let $(F,\Delta)$ be its general log leaf. Suppose that $\Delta=0$.
Suppose moreover that
$-K_\sH \sim_\mathbb{Q} P+N$ where 
$P$ is a nef $\mathbb{Q}$-divisor and
$N=\sum_{i\in I}N_i$ is an effective $\mathbb{Q}$-divisor such that $(F,N_{|F})$ is canonical. Suppose furthermore that $N_i$ is not invariant by $\sH$ for any $i\in I$.
Then $(\sH,N)$ is canonical.
\end{prop}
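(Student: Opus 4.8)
The plan is to argue by contradiction, combining the family of leaves with the pseudo-effectivity statement of Proposition \ref{prop:pseudo_effective}; the argument is modelled on the proof of Proposition \ref{proposition:thm_alg_int}, but since the positivity of $-K_\sH$ now comes from a merely nef class $P$, the ``room'' for the contradiction will have to be produced from a negative discrepancy rather than from an ample part. Assume $(\sH,N)$ is not canonical, so that some prime divisor $E_0$ over $X$ satisfies $a_{E_0}(X,\sH,N)=-c<0$. First I would pass to the family of leaves $\psi\colon Z\to Y$ with the birational morphism $\nu\colon Z\to X$ of \ref{family_leaves}, so that $\sH_Z=\nu^{-1}\sH$ is induced by the equidimensional morphism $\psi$, with $K_{\sH_Z}+B\sim_\mathbb{Q}\nu^*K_\sH$, $B$ effective and $\nu$-exceptional, and $B_{|F}=\Delta=0$. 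Using the canonical bundle formula $K_{\sH_Z}=K_{Z/Y}-R(\psi)$ together with restriction to a general fiber, a $\psi$-horizontal divisor computes a discrepancy of the general log leaf $(F,N_{|F})$, which is $\ge 0$ since $(F,N_{|F})$ is canonical; hence $E_0$ is necessarily $\psi$-vertical, and in particular it does not meet the general leaf, so the general log leaf of the transformed foliation remains $(F,N_{|F})$ with trivial $\Delta$.

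Next, via Lemma \ref{lemma:canonical_singularities} I would transfer the discrepancy computation for the morphism-induced foliation $\sH_Z$ to ordinary discrepancies of an explicit pair $\big(Z,-R(\psi)+N_Z\big)$, where the minimal model program applies; this lets me choose a model $\sigma\colon W\to X$ extracting $E_0$ on which \emph{every} exceptional divisor has nonpositive discrepancy, so that $K_{\sH_W}+N_W\sim_\mathbb{Q}\sigma^*(K_\sH+N)-cE_0$ with no interfering positive contributions. I would then run the pseudo-effectivity argument, treating $P$ by approximation: for each rational $\varepsilon>0$ the class $P+\varepsilon H$ is ample (for $H$ ample on $X$), hence big, so $P+\varepsilon H\sim_\mathbb{Q}P_\varepsilon$ for some effective $\mathbb{Q}$-divisor $P_\varepsilon$ whose general member has log canonical restriction to $F$. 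Applying Proposition \ref{prop:pseudo_effective} to $\sH_W$ with $L=N_W+\sigma^*P_\varepsilon$, the general log leaf is $\big(F,N_{|F}+(P_\varepsilon)_{|F}\big)$, which is log canonical because $(F,N_{|F})$ is canonical and $(P_\varepsilon)_{|F}$ is general; moreover $K_F=i^*K_\sH$ (as $\Delta=0$) gives $K_F+N_{|F}\sim_\mathbb{Q}-i^*P$, so that $K_F+N_{|F}+(P_\varepsilon)_{|F}\sim_\mathbb{Q}\varepsilon H_{|F}$ is effective and the Kodaira dimension on the leaf is $\ge 0$. The proposition then yields that $K_{\sH_W}+L\sim_\mathbb{Q}-cE_0+\varepsilon\,\sigma^*H$ is pseudo-effective.

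Letting $\varepsilon\to 0$ and using that the pseudo-effective cone is closed, I would conclude that $-cE_0$ is pseudo-effective; this is impossible, since $E_0$ is a nonzero effective $\sigma$-exceptional divisor, so a general movable complete intersection curve class $\alpha$ on $W$ (Remark \ref{remark:cincc}) satisfies $E_0\cdot\alpha>0$ and hence $-cE_0\cdot\alpha<0$. This contradiction shows $(\sH,N)$ is canonical. The main obstacle is the isolation step: unlike in Proposition \ref{proposition:thm_alg_int}, the positivity is only nef, so I cannot manufacture an ample summand, and the contradiction must come entirely from the single negative discrepancy $-cE_0$; this forces me to extract \emph{one} bad divisor with no positive-discrepancy exceptional divisors present, which I only know how to do by transferring to ordinary discrepancies on the family of leaves (Lemma \ref{lemma:canonical_singularities}) where the minimal model program is available. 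Secondary points to verify carefully are that a horizontal $E_0$ genuinely computes a discrepancy of $(F,N_{|F})$ (using that the $N_i$ are not invariant, so they enter the log leaf as in \ref{family_leaves}), that the general log leaf of $\sH_W$ is $\big(F,N_{|F}+(P_\varepsilon)_{|F}\big)$ with $\Delta=0$, and that the relevant divisors are $\mathbb{Q}$-Cartier as required by Proposition \ref{prop:pseudo_effective}.
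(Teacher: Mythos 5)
Your overall strategy---restricting to the general leaf to get $\kappa\ge 0$, approximating the nef class $P$ by the ample classes $P+\varepsilon H$, applying Proposition \ref{prop:pseudo_effective}, and letting $\varepsilon\to 0$---is exactly the engine of the paper's proof. The genuine gap is the isolation step that you yourself flag as the main obstacle: the existence of a model $\sigma\colon W\to X$ extracting the single divisor $E_0$ with negative foliation discrepancy, so that $K_{\sH_W}+N_W\sim_{\mathbb{Q}}\sigma^*(K_\sH+N)-cE_0$ with no other exceptional contributions. Your justification is to transfer to ordinary discrepancies of the pair $\big(Z,-R(\psi)+N_Z\big)$ via Lemma \ref{lemma:canonical_singularities} and then invoke the MMP. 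But that lemma only goes one way: its proof gives $K_{Z_1}-R(\psi)_{1}+B_1=\nu_1^*\big(K_Z-R(\psi)+B\big)+E+G$ with $G$ effective and exceptional, i.e.\ the pair discrepancy equals the foliation discrepancy \emph{plus} $\mult_{E_0}(G)\ge 0$. So a divisor with $a_{E_0}(X,\sH,N)=-c<0$ may perfectly well have nonnegative discrepancy for the pair, in which case the pair-MMP cannot detect it, let alone extract it as the unique exceptional divisor of some model. Even setting this aside, the standard extraction theorems require an effective klt boundary, whereas your pair carries the anti-effective part $-R(\psi)$; and on the resulting merely normal $W$, the $\mathbb{Q}$-Cartier hypotheses needed to apply Proposition \ref{prop:pseudo_effective} to $K_{\sH_W}$ and $L=N_W+\sigma^*P_\varepsilon$ are not granted. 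So the step ``choose $W$ extracting $E_0$ with no interfering positive contributions'' is unjustified, and it is the crux of your contradiction.

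The paper shows that no isolation is needed, so the contradiction setup can be dropped entirely. Work on an arbitrary smooth projective birational model $\nu_1\colon Z_1\to Z$ of the family of leaves and let $E_1$ be the \emph{full} discrepancy divisor, $K_{\sH_{Z_1}}+N_1=(\nu\circ\nu_1)^*(K_\sH+N)+E_1$. Since $K_\sH+P+N\sim_{\mathbb{Q}}0$, this rearranges to $E_1=K_{\sH_{Z_1}}+(\nu\circ\nu_1)^*P+N_1$, and your own pseudo-effectivity argument (restriction to the proper transform $F_1$ of the leaf, where ${E_1}_{|F_1}$ is effective because $(F,N_{|F})$ is canonical, so that the relevant Kodaira dimension is $\dim F_1$; then Proposition \ref{prop:pseudo_effective} with $L=A_\varepsilon+N_1$, $A_\varepsilon\sim_{\mathbb{Q}}(\nu\circ\nu_1)^*P+\varepsilon H$, and $\varepsilon\to 0$) shows that $E_1$ is pseudo-effective. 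The proof then concludes by a result of Lazarsfeld (\cite[Corollary 13]{kollar_larsen}): an exceptional $\mathbb{Q}$-divisor is pseudo-effective if and only if it is effective. This result replaces your elementary one-divisor intersection argument and handles all exceptional divisors at once, with coefficients of arbitrary signs---which is exactly what your approach was missing.
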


\begin{proof}
Let $\psi\colon Z\to Y$ be the family of leaves, and let $\nu\colon Z \to X$ be the natural morphism (see \ref{family_leaves}).
Let $\nu_1 \colon Z_1 \to Z$ be a birational morphism with $Z_1$ smooth and projective. We obtain a commutative diagram

\centerline{
\xymatrix{
Z_1 \ar[rr]^{\nu_1,\text{ birational}}\ar[d]^{\psi_1} && Z \ar[d]^{\psi}\ar[rr]^{\nu,\text{ birational}} && X \\
Y \ar@{=}[rr] && Y. && \\
}
}
\noindent Denote by $\sH_{Z_1}$ the foliation on $Z_1$ induced by 
$\sH$.
Let $E_1$ be the $\nu\circ\nu_1$-exceptional
$\mathbb{Q}$-divisor such that
\begin{equation}\label{eq:canonical_bundle_formula}
K_{\sH_{Z_1}}+N_1=(\nu\circ\nu_1)^*(K_\sH+N)+E_1,
\end{equation}
where $N_1$ denotes the proper transform of $N$ in $Z_1$. To prove the lemma, it is enough to show that $E_1$ is effective.
Denote by $F_1$ the proper transform of $F$ in $Z_1$. 
Note that we have ${K_\sH}_{|F}\sim_\mathbb{Q}K_F$ since $\Delta=0$.
Since $(F,N_{|F})$ is canonical, the restriction ${E_1}_{|F}$ of $E_1$ to $F$
is effective, and $(F_1,{N_1}_{|F_1})$ is canonical as well (see \cite[Lemma 3.10]{kollar97}). 
Let $H$ be an ample (effective) divisor on $Z_1$, let $\varepsilon > 0$ be a rational number, and
let $A_\varepsilon\sim_\mathbb{Q}(\nu\circ\nu_1)^*P+\varepsilon H$
such that $(F_1,{N_1}_{|F_1}+{A_\epsilon}_{|F_1})$ is canonical.
We have
$$K_{F_1}+{N_1}_{|F_1}=({\nu_1}_{|F_1})^*\big(K_F+N_{|F}\big)+{E_1}_{|F_1}
\sim_\mathbb{Q}-({\nu_1}_{|F_1})^*(P_{|F})+{E_1}_{|F_1}$$
by \eqref{eq:canonical_bundle_formula} using
\ref{pullback_foliations} and the fact that $\Delta=0$, and hence 
$$\kappa\big(F_1,K_{F_1}+{N_1}_{|F_1}+{A_\varepsilon}_{|F_1}\big)
=\kappa\big(F_1,{E_1}_{|F_1}+\varepsilon H_{|F_1}\big)=\dim F_1
\ge 0.$$
By Proposition \ref{prop:pseudo_effective} applied to $\sH_{Z_1}$
and $L=A_\varepsilon+N_1\sim_\mathbb{Q}(\nu\circ\nu_1)^*P+\varepsilon H + N_1$ , 
we conclude that 
$$K_{\sH_{Z_1}}+A_\varepsilon+N_1\sim_\mathbb{Q}
K_{\sH_{Z_1}}+(\nu\circ\nu_1)^*P+\varepsilon H + N_1
$$
is pseudo-effective for any $0<\varepsilon \ll 1$, and hence
$K_{\sH_{Z_1}}+(\nu\circ\nu_1)^*P+N_1$ is pseudo-effective as well.
Also, by \eqref{eq:canonical_bundle_formula}, we have
$$
K_{\sH_{Z_1}}+(\nu\circ\nu_1)^*P+N_1=E_1.
$$
By a result of Lazarsfeld (\cite[Corollary 13]{kollar_larsen}), the 
$\nu\circ\nu_1$-exceptional $\mathbb{Q}$-divisor $E_1$ is pseudo-effective
if and only if it is effective, completing the proof of the proposition.
\end{proof}

The proof of the next Proposition is similar to that of Proposition \ref{prop:singularities_foliations}. One only needs to replace the use of Proposition \ref{prop:pseudo_effective} with Proposition \ref{prop:pseudo_effective2}.

\begin{prop}\label{prop:singularities_foliations2}
Let $\phi\colon X \to Y$ be a dominant morphism of normal projective varieties with general fiber $F$.
Suppose that $Y$ is smooth, and that
$\phi$ has connected fibers.
Suppose moreover that
$-K_{X/Y} \sim_\mathbb{Q} P+N$ where 
$P$ is a nef $\mathbb{Q}$-divisor and
$N$ is an effective $\mathbb{Q}$-divisor such that $(F,N_{|F})$ is canonical. 
Then $(X,N)$ has canonical singularities.
\end{prop}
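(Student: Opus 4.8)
The plan is to run the proof of Proposition~\ref{prop:singularities_foliations} essentially verbatim, replacing the foliation canonical divisor $K_\sH$ by the relative canonical divisor $K_{X/Y}$, the ramification divisor $R(\psi)$ by $R(\phi)$, and the weak-positivity input Proposition~\ref{prop:pseudo_effective} by Proposition~\ref{prop:pseudo_effective2}.

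First I would check that the pair $(X,N)$ is well defined. Since $Y$ is smooth, $\phi^*K_Y$ is Cartier, and from $-K_{X/Y}\sim_\mathbb{Q} P+N$ one reads off that $K_{X/Y}$, and hence $K_X+N=K_{X/Y}+N+\phi^*K_Y$, is $\mathbb{Q}$-Cartier. I then fix a log resolution $\nu_1\colon X_1\to X$ of $(X,N)$ with $X_1$ smooth and projective, set $\phi_1:=\phi\circ\nu_1\colon X_1\to Y$, and write
\[
K_{X_1}+N_1=\nu_1^*(K_X+N)+E_1,
\]
where $N_1$ is the proper transform of $N$ and $E_1$ is a $\nu_1$-exceptional $\mathbb{Q}$-divisor. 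Proving that $(X,N)$ is canonical is exactly proving that $E_1$ is effective.

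Next I would restrict to a general fiber. Denoting by $F_1$ the proper transform of $F$ (a general fiber of $\phi_1$) and using the adjunction relation $\big(K_{X/Y}\big)_{|F}\sim_\mathbb{Q} K_F$, the hypothesis that $(F,N_{|F})$ is canonical yields that ${E_1}_{|F_1}$ is effective and that $(F_1,{N_1}_{|F_1})$ is again canonical (cf.\ \cite[Lemma~3.10]{kollar97}). For $H$ ample on $X_1$ and $0<\varepsilon\ll 1$, I would choose a general $A_\varepsilon\sim_\mathbb{Q}\nu_1^*P+\varepsilon H$ so that $(F_1,{N_1}_{|F_1}+{A_\varepsilon}_{|F_1})$ is canonical, and then compute, using $\big(K_{X/Y}+N\big)_{|F}\sim_\mathbb{Q}-P_{|F}$,
\[
K_{F_1}+{N_1}_{|F_1}+{A_\varepsilon}_{|F_1}\sim_\mathbb{Q} {E_1}_{|F_1}+\varepsilon H_{|F_1}.
\]
The right-hand side is effective, so $\kappa\big(F_1,K_{F_1}+(A_\varepsilon+N_1)_{|F_1}\big)\ge 0$.

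Finally I would feed this into Proposition~\ref{prop:pseudo_effective2}, applied to $\phi_1$ with $L=A_\varepsilon+N_1$, to conclude that $K_{X_1/Y}-R(\phi_1)+A_\varepsilon+N_1$ is pseudo-effective. As $R(\phi_1)$ is effective, $K_{X_1/Y}+A_\varepsilon+N_1$ is pseudo-effective too, and letting $\varepsilon\to 0$ (the pseudo-effective cone being closed) shows that $K_{X_1/Y}+\nu_1^*P+N_1$ is pseudo-effective. A short computation from $-K_{X/Y}\sim_\mathbb{Q} P+N$ identifies $K_{X_1/Y}+\nu_1^*P+N_1\sim_\mathbb{Q} E_1$, so $E_1$ is pseudo-effective; being $\nu_1$-exceptional, it is therefore effective by Lazarsfeld's result \cite[Corollary~13]{kollar_larsen}, which completes the argument. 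I expect the only points needing care to be bookkeeping rather than substance: keeping straight the $\mathbb{Q}$-linear equivalences relating $K_X+N$, $K_{X/Y}$, $\phi^*K_Y$ and $E_1$, and justifying the two fiberwise assertions (effectivity of ${E_1}_{|F_1}$ and the Kodaira-dimension estimate) via adjunction. The harmless reintroduction of the effective divisor $R(\phi_1)$ and the passage from pseudo-effective to effective are routine once Proposition~\ref{prop:pseudo_effective2} is available.
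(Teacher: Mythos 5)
Your proposal follows exactly the route the paper intends: the paper's proof of Proposition \ref{prop:singularities_foliations2} is, by its own declaration, the proof of Proposition \ref{prop:singularities_foliations} with Proposition \ref{prop:pseudo_effective} replaced by Proposition \ref{prop:pseudo_effective2}, and your intermediate steps (restriction to the general fiber together with \cite[Lemma 3.10]{kollar97}, the perturbation $A_\varepsilon\sim_\mathbb{Q}\nu_1^*P+\varepsilon H$, the identification $E_1\sim_\mathbb{Q}K_{X_1/Y}+\nu_1^*P+N_1$, the harmless reinsertion of the effective divisor $R(\phi_1)$, and the appeal to \cite[Corollary 13]{kollar_larsen}) reproduce that argument step by step, with $K_{X/Y}$ in place of $K_\sH$.

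There is, however, one genuine flaw in your setup: you fix a single log resolution $\nu_1\colon X_1\to X$ of $(X,N)$ and assert that ``proving that $(X,N)$ is canonical is exactly proving that $E_1$ is effective.'' That equivalence is false. Unlike klt-ness or log canonicity, canonicity cannot be certified on one log resolution by checking only the exceptional discrepancies appearing there: divisorial valuations whose centers are not visible on $X_1$ are left uncontrolled. For instance, if $X$ is smooth and $N=D_1+D_2$ is a sum of two smooth prime divisors with coefficient $1$ meeting transversally, then the identity map is a log resolution of $(X,N)$ and the corresponding $E_1=0$ is effective, yet blowing up a component of $D_1\cap D_2$ produces an exceptional divisor of discrepancy $-1$, so $(X,N)$ is not canonical. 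Thus effectivity of $E_1$ on your fixed model is necessary but not sufficient, and as written your argument proves something weaker than the statement. The repair is immediate, and it is precisely what the paper does in the proof of Proposition \ref{prop:singularities_foliations}: take $\nu_1\colon X_1\to X$ to be an \emph{arbitrary} projective birational morphism with $X_1$ smooth. Every divisorial valuation exceptional over $X$ appears as a divisor on some such model, and nothing in your argument ever uses that $\nu_1$ is a log resolution (the crepant-pullback equation, the restriction to $F_1$, the choice of $A_\varepsilon$, Proposition \ref{prop:pseudo_effective2}, and Lazarsfeld's result all apply verbatim), so with this one change your proof is complete and agrees with the paper's.
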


\section{Foliations with nef anti-canonical bundle}

In this section, we provide another technical tool for the proof of the main results. The following result is the main observation of this paper. Note that Proposition \ref{proposition:regular_versus_algebraically_integrable_foliation_intro} is an immediate consequence of 
Proposition \ref{proposition:regular_versus_algebraically_integrable_foliation} 
and Lemma \ref{lemma:compact_leaf}
below.

\begin{prop}\label{proposition:regular_versus_algebraically_integrable_foliation}
Let $X$ be a complex projective manifold, and let $\sF\subset T_X$ be a foliation. 
Suppose that 
$-K_\sF \equiv P+N$ where 
$P$ is a nef $\mathbb{Q}$-divisor and
$N$ is an effective $\mathbb{Q}$-divisor such that
$(X,N)$ is log canonical.
Suppose that the algebraic part $\sH$ of $\sF$ has a compact leaf. Let $\psi\colon Z\to Y$ be the family of leaves of $\sH$, and let $\nu\colon Z \to X$ be the natural morphism. Set 
$\phi:=\psi \circ \nu^{-1}\colon X \dashrightarrow Y$, let
$\sG$ be the foliation on $Y$ such that $\sF=\phi^{-1}\sG$, and let $\sH_Z$ be the foliation on $Z$ induced by $\sH$. Then

\begin{enumerate}
\item $\phi^*K_\sG\equiv 0$,
\item $K_\sH\equiv K_\sF$, and
\item $K_{\sH_Z}\sim\nu^*K_\sH$.
\end{enumerate}
\end{prop}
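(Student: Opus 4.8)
The plan is to derive all three assertions from the single numerical statement (1), and to prove (1) by trapping the class $\phi^*K_\sG$ between the pseudo-effective cone and its negative. First I would record the relevant canonical-class identity on the family of leaves. Writing $\sF_Z=\nu^{-1}\sF=\psi^{-1}\sG$, and using $K_{\sH_Z}=K_{Z/Y}-R(\psi)$ together with the pull-back formula \eqref{pullback_fol} applied to $\sF_Z=\psi^{-1}\sG$, the invariant branch contributions cancel against the corresponding terms of $R(\psi)$ and one is left with
$$K_{\sF_Z}=K_{\sH_Z}+\psi^*K_\sG+R',\qquad R'\ge 0$$
where $R'$ is effective and $\psi$-vertical. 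Pushing forward by the birational morphism $\nu$ (using $\nu_*K_{\sF_Z}=K_\sF$, $\nu_*K_{\sH_Z}=K_\sH$, and $\nu_*\psi^*K_\sG=\phi^*K_\sG$ by definition of the numerical pull-back along the almost proper map $\phi$) gives
$$K_\sF=K_\sH+\phi^*K_\sG+R'',\qquad R''=\nu_*R'\ge 0.$$
Thus once $\phi^*K_\sG\equiv 0$ is established, the same relations will force $R''\equiv 0$ and hence (2), while (3) is the vanishing of the effective $\nu$-exceptional divisor $B$ of \eqref{eq:universal_canonical_bundle_formula}.

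For the first bound, I would show $(-\phi^*K_\sG)\cdot[H]^{\dim X-1}\le 0$ for a very ample $H$. Apply Proposition \ref{proposition:HN_on_the_base} to the sheaf $\sG$ on $Y$: it yields a saturated subsheaf $\sE\subseteq\sG$ whose pull-back restricts, on a general complete intersection curve $C$ of type $m\nu^*H$, to the maximally destabilizing subsheaf of $\psi^*\sG_{|C}$. If the maximal slope were positive, then $\psi^*\sE_{|C}$ would be semistable of positive degree, hence an ample bundle on $C$; since $\sE$ is involutive (the maximally destabilizing subsheaf of a foliation is closed under the Lie bracket) it defines a subfoliation of $\sG$, and Theorem \ref{thm:BM} would force the leaf through a point of $C$ to be algebraic, contradicting the pure transcendence of $\sG$. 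Hence $\mu^{\max}_{[C]}(\sG)\le 0$, so $\det\sG\cdot\psi_*C=-K_\sG\cdot\psi_*C\le 0$, which translates into $\phi^*K_\sG\cdot[H]^{\dim X-1}\ge 0$.

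For the reverse bound I would use the weak-positivity input of Proposition \ref{prop:pseudo_effective} applied to the algebraically integrable foliation $\sH$. Since a general leaf $F$ lies in a fibre of $\phi$, we have $(\phi^*K_\sG)_{|F}\equiv 0$, so from $K_\sH\equiv-(P+N+\phi^*K_\sG+R'')$ the general log leaf satisfies $K_F+\Delta\equiv-(P+N+R'')_{|F}$. Choosing an effective $L\sim_\bQ P+N+R''$ and using that $(X,N)$ is log canonical (via adjunction to the general leaf) together with the compact-leaf hypothesis (to guarantee that $(F,\Delta)$, hence $(F,\Delta+L_{|F})$, is log canonical and that $\kappa(F,K_F+\Delta+L_{|F})\ge 0$), Proposition \ref{prop:pseudo_effective} shows that $K_\sH+L\equiv-\phi^*K_\sG$ is pseudo-effective. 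Since $[H]^{\dim X-1}$ lies in the interior of the movable cone, a pseudo-effective class pairing non-positively with it must vanish numerically; combined with the previous paragraph this gives $\phi^*K_\sG\equiv 0$, which is (1). Feeding this back into the two displayed identities forces $R''\equiv 0$, whence $K_\sH\equiv K_\sF$, proving (2); finally $-K_\sH\equiv P+N$ makes the effective, $\nu$-exceptional divisor $B=\nu^*K_\sH-K_{\sH_Z}$ numerically trivial, so $B=0$ by the negativity lemma (and $\sH$ canonical, cf. Remark \ref{remark:regular_canonical}), which is (3).

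I expect the main obstacle to be arranging the hypotheses of Proposition \ref{prop:pseudo_effective} uniformly in the second bound, precisely in the case where the leaves of $\sH$ are of Fano type and $K_\sH$ is far from pseudo-effective: here the conclusion cannot come from any positivity of $K_\sH$ itself, so one must exhibit a genuine effective $L$ in the correct numerical class and verify both the log-canonicity and the $\kappa\ge 0$ condition on the general log leaf from $(X,N)$ log canonical and the compact-leaf assumption alone (the passage from $K_F+\Delta+L_{|F}\equiv 0$ to $\kappa\ge 0$ being the delicate point, requiring $\mathbb{Q}$-linear rather than merely numerical control). The secondary difficulty is the bookkeeping of the correction divisors $R'$, $R''$ and $B$: showing they are effective and vertical/exceptional, and that they become numerically trivial once (1) is known.
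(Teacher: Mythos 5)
Your outline reproduces the paper's strategy almost step for step: the identity $K_\sF=K_\sH+\phi^*K_\sG+R''$ with $R''\ge 0$, the lower bound $\phi^*K_\sG\cdot A^{\dim X-1}\ge 0$ obtained from Proposition \ref{proposition:HN_on_the_base}, the involutivity criterion of \cite[Proposition 30]{kebekus_solaconde_toma07} and Theorem \ref{thm:BM}, and the opposite bound via Proposition \ref{prop:pseudo_effective}. But the proposal has genuine gaps, and the main one is precisely the point you flag as "the main obstacle" and then leave open. You "choose an effective $L\sim_\bQ P+N+R''$"; no such $L$ need exist, since $P$ is only nef and nef $\mathbb{Q}$-divisors need not be $\mathbb{Q}$-linearly equivalent to effective ones. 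Worse, even granting such an $L$, the hypothesis $\kappa\big(F,K_F+\Delta+L_{|F}\big)\ge 0$ of Proposition \ref{prop:pseudo_effective} cannot be verified, because you only control the numerical class of $K_F+\Delta+L_{|F}$, and a divisor numerically equivalent to an effective one can still have $\kappa=-\infty$. The paper's Step 2 resolves both problems at once by an ample perturbation: for every rational $\varepsilon>0$ the class $P+\varepsilon A$ is $\mathbb{Q}$-ample ($A$ ample), so one can pick a general effective $A_\varepsilon\sim_\bQ P+\varepsilon A$ with $(X,N+A_\varepsilon)$ log canonical; then $(F,N_{|F}+{A_\varepsilon}_{|F})$ is log canonical by \cite[Theorem 4.8]{kollar97}, and $K_F+N_{|F}+{A_\varepsilon}_{|F}\equiv\varepsilon A_{|F}$ is ample --- ampleness, unlike effectivity, is a numerical property --- so $\kappa\ge 0$ holds trivially. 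Proposition \ref{prop:pseudo_effective} then gives pseudo-effectivity of $K_\sH-K_\sF+\varepsilon A$ for all $\varepsilon>0$, and one concludes by letting $\varepsilon\to 0$ and using closedness of the pseudo-effective cone. Without this perturbation-and-limit device your second bound never gets started.

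Two further gaps. First, (2) does not follow from (1) as you assert: with your choice of $L$ the pseudo-effective class is exactly $-\phi^*K_\sG$, so (1) only yields $K_\sF-K_\sH\equiv R''$, and nothing in your displayed identities forces $R''\equiv 0$. The paper instead establishes pseudo-effectivity of $K_\sH-K_\sF\equiv-(\phi^*K_\sG+R'')$ itself (i.e.\ it runs the argument with $L$ in the class of $P+N$); pairing against $A^{\dim X-1}$ and invoking \cite[Proposition 6.5]{greb_toma} then kills both terms simultaneously, giving $K_\sH\equiv K_\sF$, $R''=0$ (an effective divisor of zero degree against an ample class), and $\phi^*K_\sG\equiv 0$. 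Second, your proof of (3) is unsound: Remark \ref{remark:regular_canonical} applies to \emph{regular} foliations, but the algebraic part $\sH$ need not be regular --- indeed, in this proposition $\sF$ itself is not assumed regular --- and there is no reason why $-K_\sH\equiv P+N$ should make $B$ numerically trivial. The paper proves (3) by rerunning the weak-positivity argument on $Z$, using Corollary \ref{cor:pseudo_effective2} (applicable because $\psi$ is equidimensional) to show that $K_{\sH_Z}+\nu^*P+\nu^*N\sim-B$ is pseudo-effective; since $B$ is effective, this forces $B=0$.
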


\begin{proof}
We subdivide the proof into a number of relatively independent steps.  
\bigskip

\noindent\textbf{Step 1}. 
By assumption, $\phi$ is an almost proper map: there exist dense Zariski open sets 
$X^\circ \subset X$ and 
$Y^\circ \subset Y$ such that the the restriction $\phi^\circ$ of
$\phi$ to $X^\circ$ induces a proper morphism
$\phi^\circ\colon X^\circ \to Y^\circ$.
Moreover, the following holds: 
\begin{enumerate}
\item[$(\Diamond)$] there is no positive-dimensional algebraic subvariety passing through a general point of $Y$ that is tangent to 
$\sG$. 
\end{enumerate}

\bigskip

\noindent\textbf{Step 2}. Let $A$ be an ample divisor on $X$. 
Note that $F$ is smooth. There exists an open set $U \supset F$ such that 
$\sH_{|U}$ is a subbundle of $\sF_{|U}$, $\sG_{|U}$ is locally free,
and 
${\big(\sF/\sH\big)}_{|U}\cong {\phi^*\sG}_{|U}$.
In particular, ${K_\sF}_{|F}\sim {K_\sH}_{|F}\sim K_F$.
Let $\varepsilon > 0$ be a rational number. Then
$K_{F}-{K_\sF}_{|F}+\varepsilon A_{|F}\equiv \varepsilon A_{|F}$
is $\mathbb{Q}$-ample, and hence 
$\kappa(F,K_{F}+P_{|F}+N_{|F}+\varepsilon A_{|F}) \ge 0$. 
Since $P+\varepsilon A$ is $\mathbb{Q}$-ample, there exists
an effective $\mathbb{Q}$-divisor $A_\varepsilon \sim_\mathbb{Q}P+\varepsilon A$ such that
the pair $(X,N+A_\varepsilon)$ is log canonical.
From \cite[Theorem 4.8]{kollar97}, we conclude that
$(F,N_{|F}+{A_\varepsilon}_{|F})$ is log canonical. 
By Proposition \ref{prop:pseudo_effective} applied to the foliation 
$\sH$ on $X$ and $L:=A_\varepsilon+N$, we conclude
that $K_\sH + P + N + \varepsilon A  \equiv K_\sH-K_\sF+\varepsilon A$ is pseudo-effective for any positive rational number $\varepsilon >0$. It follows that $K_\sH-K_\sF$ is pseudo-effective.

\bigskip

\noindent\textbf{Step 3}. We will show that
$(K_\sH-K_\sF)\cdot A^{\dim X-1}\le 0$. 
Note that $\psi \colon Z \to Y$ is equidimensional.
Thus, by \ref{pullback_foliations},
there is an effective divisor $R$ on $X$ such that
$$K_\sH-K_\sF=-(\phi^*K_\sG
+R).$$
To prove that $(K_\sH-K_\sF)\cdot A^{\dim X-1}\le 0$, it is enough to show that
$\phi^*K_\sG\cdot A^{\dim X-1}\ge 0$. We argue by contradiction, and we assume that
$\phi^*K_\sG\cdot A^{\dim X-1}< 0$.
By Proposition \ref{proposition:HN_on_the_base},
there exists a saturated subsheaf $\sE \subseteq \sG$ satisfying the following property.
Let $C$ be a general complete intersection curve of type 
$m\nu^*A$ with $m$ sufficiently large. Then
${\psi^*\sE}_{|C}$ is the maximally destabilizing subsheaf of $\psi^*\sG_{|C}$. 
By our current assumption, we have $\deg({\psi^*\sG}_{|C})>0$ so that 
we must 
have $\deg({\psi^*\sE}_{|C})>0$. This implies that ${\psi^*\sE}_{|C}$ is ample.  
We conclude that $\sE_{|B}$ is an ample vector bundle, where $B:=\psi(C)$. 
By \cite[Proposition 30]{kebekus_solaconde_toma07}, we have
$\textup{Hom}_C({\psi^*\sE}_{|C}\otimes {\psi^*\sE}_{|C},{\psi^*\sG}_{|C}/{\psi^*\sE}_{|C})=0$
and thus 
$\textup{Hom}_Y(\sE\otimes \sE,\sG/\sE)=0$. Since $\sG$ is closed under the Lie bracket, it follows that $\sE$ is a foliation. 
By Theorem \ref{thm:BM}, we conclude that the leaf of $\sE$ through any point of $B$ is algebraic. But this contradicts $(\Diamond)$ above.
This proves that $\phi^*K_\sG\cdot A^{\dim X-1}\ge 0$, and 
$(K_\sH-K_\sF)\cdot A^{\dim X-1}\le 0$.

\bigskip

\noindent\textbf{Step 4}. By Steps 2 and 3, we must have 
$(K_\sH-K_\sF)\cdot A^{\dim X-1} = 0$. 
From \cite[Proposition 6.5]{greb_toma}, we conclude
that $$K_\sH\equiv K_\sF.$$
This proves (2). Note also that we must have $\phi^*K_\sG\equiv 0$, proving (1).
\bigskip

\noindent\textbf{Step 5}. 
Let $\sH_Z$ be the folitation on $Z$ induced by $\sH$. We show that 
$$K_{\sH_Z}\sim \nu^*K_\sH .$$
Note that $K_{\sH_Z}=K_{Z/Y}-R(\psi)$ by \ref{pullback_foliations}.
Up to replacing $P$ with $K_\sH-N$ if necessary, we may assume that 
$K_\sH=P+N$. By \ref{family_leaves},
there is an effective Weil $\bQ$-divisor $B$ on $Z$ such that 
$$K_{\sH_Z}+B \sim \nu^* K_\sH,$$
and hence 
$$K_{\sH_Z}+\nu^*P+\nu^*N \sim -B.$$
Note that $\textup{Supp}(B)\cap F =\emptyset$.
The same argument used in Step 2 shows that 
$K_{\sH_Z}+\nu^*P+\nu^*N$ is pseudo-effective.
One only needs to replace the use of Proposition \ref{prop:pseudo_effective}
with Corollary \ref{cor:pseudo_effective2}.
Since $K_{\sH_Z}+\nu^*P+\nu^*N\sim-B$, we conclude that
$B=0$. This completes the proof of the proposition.
\end{proof}

\begin{lemma}\label{lemma:compact_leaf}
Let $X$ be a complex projective manifold, and let $\sF\subset T_X$ be a foliation. 
Suppose that either $\sF$ is regular, or that $\sF$ has a compact leaf. Then 
the algebraic part $\sH$ of $\sF$ has a compact leaf.
\end{lemma}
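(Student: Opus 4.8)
The plan is to reduce the lemma to a single geometric assertion about the algebraic part $\sH$, which is algebraically integrable, and then to verify that assertion in each of the two cases. I would first introduce the family of leaves $\psi\colon Z\to Y$ of $\sH$ together with the birational morphism $\nu\colon Z\to X$ from \ref{family_leaves}. Since $\psi$ is proper, for a general point $y\in Y$ the subvariety $F_y:=\nu(\psi^{-1}(y))$ is the closure of a leaf of $\sH$ and is compact; by generic smoothness $\psi^{-1}(y)$ is smooth, and as $X$ is smooth and $\nu$ is birational, $\psi^{-1}(y)$ avoids $\Exc(\nu)$ for general $y$ \emph{unless} $\Exc(\nu)$ dominates $Y$ under $\psi$. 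The crucial observation is that $F_y$ is a genuine compact \emph{leaf} of $\sH$ precisely when $F_y\subset X^{\circ}$, i.e. when $F_y\cap\Sing(\sH)=\varnothing$; so it suffices to produce one such $y$. Writing $\varphi:=\psi\circ\nu^{-1}\colon X\map Y$ and $I:=\nu(\Exc(\nu))$, which has codimension $\ge 2$ in $X$, I note that wherever $\varphi$ is a submersion, $\sG$ is a subbundle, and $\sF$ is a subbundle, the quotient $T_X/\sH=\ker(\,T_X/\sH\to T_X/\sF\,)$ is locally free; hence $\Sing(\sH)$ is contained in $I$ together with the non-submersive locus of $\varphi$, the set $\varphi^{-1}(\Sing(\sG))$, and $\Sing(\sF)$. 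The non-submersive locus does not dominate $Y$ by generic smoothness, and $\Sing(\sG)\subsetneq Y$ is a proper closed subset, so neither does $\varphi^{-1}(\Sing(\sG))$. Thus, modulo controlling $\Sing(\sF)$, the whole lemma comes down to showing that \emph{$\Exc(\nu)$ does not dominate $Y$}, equivalently that the general leaf-closures of $\sH$ do not all pass through the indeterminacy locus $I$ of $\varphi$.

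If $\sF$ is regular, then $\Sing(\sF)=\varnothing$ and $T_X/\sF$ is locally free, so the exact sequence $0\to\sF/\sH\to T_X/\sH\to T_X/\sF\to 0$ identifies $\Sing(\sH)$ with the non-locally-free locus of $\sF/\sH$; combined with the reduction above, $\Sing(\sH)$ is then concentrated on $I$ up to loci that miss a general $F_y$. It therefore remains to exclude that $\Exc(\nu)$ dominates $Y$. Here I would invoke the pure transcendence of $\sG$: a dominating exceptional divisor would force the general leaf-closures of $\sH$ to pass through the common codimension $\ge 2$ locus $I$, and restricting to a general leaf of $\sF$—which is a smooth manifold because $\sF$ is regular—I expect this to manifest a positive-dimensional algebraic subvariety through a general point of $Y$ tangent to $\sG$, contradicting the defining property of the algebraic part.

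If instead $\sF$ has a compact leaf $L$, I would exploit that $L$ is closed in $X$, hence a smooth projective variety by Chow's theorem. Since $\sH\subset\sF$ and $T_L=\sF_{|L}$, any leaf of $\sH$ meeting $L$ is tangent to $\sF$ and so is contained in the maximal $\sF$-leaf $L$; its closure, being a fibre of $\varphi$, is then a compact subvariety of $L$. Applying the reduction of the first paragraph to the algebraically integrable foliation $\sH_{|L}$ on the \emph{projective} manifold $L$ (where $\sF_{|L}=T_L$ is a subbundle, so the proviso on $\Sing(\sF)$ disappears), a general leaf-closure $F\subset L$ is smooth and compact, and the task reduces to showing that some such $F$ avoids $\Sing(\sH)\cap L$, i.e. that $\varphi_{|L}$ is not dicritical along $\Sing(\sH)$ inside $L$. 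I expect to rule this out, again, through the pure transcendence of $\sG$ applied to the Zariski closure $W$ of the leaf of $\sG$ that is the image of $L$.

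The main obstacle, in both cases, is precisely the exclusion of this \emph{dicritical} behaviour of the algebraic-part fibration—the phenomenon exhibited by the radial pencil of lines through a point of $\mathbb{P}^2$, for which neither hypothesis of the lemma holds and which has no compact leaf. The regularity of $\sF$, respectively the existence of a compact $\sF$-leaf, is what I expect to use exactly to forbid the general leaf-closures of $\sH$ from concentrating on the indeterminacy locus $I$; turning the heuristic contradictions above into rigorous ones, via the interplay between the local-freeness forced on $T_X/\sH$ and the pure transcendence of $\sG$, is the heart of the argument.
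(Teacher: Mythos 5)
Your reduction of the lemma to the non-domination statement $\psi\big(\Exc(\nu)\big)\subsetneq Y$ is exactly the paper's first step, and your observation that leaf closures of $\sH$ meeting a compact leaf $L$ of $\sF$ are trapped inside $L$ also appears in the paper. But the heart of the proof --- deriving a contradiction from the assumption that $\Exc(\nu)$ dominates $Y$ --- is missing from your sketch: you say you ``expect'' a dominating exceptional locus to ``manifest a positive-dimensional algebraic subvariety through a general point of $Y$ tangent to $\sG$,'' and you concede yourself that making this rigorous ``is the heart of the argument.'' That step is not routine, and the paper supplies a specific construction for it. Concretely: let $F=\nu(Z_y)$ be the closure of a general fiber of $\phi$; one first shows $F\cap\Sing(\sF)=\emptyset$ (automatic in the regular case, and via the trapping argument in the compact-leaf case). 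If $\Exc(\nu)$ dominates $Y$, then $Z_y$ meets $\Exc(\nu)$, so by Zariski's Main Theorem there is a complete integral curve $C\subset Z$ contracted by $\nu$ to a point $x\in F$, chosen through a point of $Z_y\cap\Exc(\nu)$ so that $y\in B:=\psi(C)$; moreover $\dim B=1$ since $\nu$ is finite on fibers of $\psi$. Every leaf closure $\nu(Z_b)$, $b\in B$, passes through $x$; since $\sF$ is regular near $F$, each of them is contained in the single leaf of $\sF$ through $x$, so the $(\dim F+1)$-dimensional variety $\nu\big(\psi^{-1}(B)\big)\supset F$ is tangent to $\sF$, whence $B$ is an algebraic curve through the general point $y$ tangent to $\sG$ --- contradicting pure transcendence. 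Nothing in your proposal produces the curve $B$ through a \emph{general} point of $Y$, nor the ``all leaves through a common point lie in one $\sF$-leaf'' argument that makes $\nu\big(\psi^{-1}(B)\big)$ tangent to $\sF$; this is a genuine gap, not a detail.

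A second problem is your strategy for the compact-leaf case: restricting to $L$ and invoking ``the pure transcendence of $\sG$ applied to the Zariski closure $W$ of the leaf of $\sG$ that is the image of $L$'' cannot work as stated. Pure transcendence is a statement about \emph{general} points of $Y$; when $\sF\neq\sH$, the image $W=\phi(L)$ is itself a positive-dimensional algebraic subvariety tangent to $\sG$, hence does not pass through a general point of $Y$, and the restriction of $\sG$ to the invariant subvariety $W$ need not be purely transcendental at all (it may well be a fibration). The paper sidesteps this entirely: the compact leaf is used \emph{only} to show that the closure of a general fiber of $\phi$ (general in $X$, not inside $L$) avoids $\Sing(\sF)$ --- pick $y$ with $\nu(Z_y)\cap L\neq\emptyset$, deduce $\nu(Z_y)\subset L$, and propagate $Z_{y'}\cap\nu^{-1}\big(\Sing(\sF)\big)=\emptyset$ to all $y'$ near $y$, hence to general $y'$ --- after which the single global contradiction argument above applies uniformly to both cases.
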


\begin{proof}
By \cite[Section 2.4]{loray_pereira_touzet},
there exist a normal projective variety $Y$, a dominant rational map $\phi\colon X \dashrightarrow Y$, and a foliation $\sG$ on $Y$ such that the following holds: 
\begin{enumerate}
\item[(a)] there is no positive-dimensional algebraic subvariety passing through a general point of $Y$ that is tangent to 
$\sG$; and
\item[(b)] $\sF$ is the pull-back of $\sG$ via $\phi$. 
\end{enumerate}
Note that $\sH$ is the foliation on $X$ induced by $\phi$. After replacing $Y$ with a birationally equivalent variety, we may assume that $Y$ is the family of leaves of $\sH$. 
Let $Z$ be the normalization of the universal cycle over $Y$.
It comes with morphisms $\psi\colon Z\to Y$, and 
$\nu\colon Z \to X$ (see \ref{family_leaves}).

To prove the lemma, we have to show that $\psi\big(\textup{Exc}(\nu)\big)\subsetneq Y$. 
We argue by contradiction and assume that 
$\psi\big(\textup{Exc}(\nu)\big) = Y$.

Let $F$ be the closure of a general fiber of $\phi$. 
Denote by $S$ the (possibly empty) singular locus of $\sF$.
Then $F \cap S = \emptyset$.  
Indeed, suppose that $S \neq \emptyset$, and consider a compact $L$ leaf of $\sF$. Pick 
$y \in Y$ such that $Z_y\cap L\neq \emptyset$. Then $Z_y \subset L$, and hence $Z_y \cap S =\emptyset$. 
Thus, if $y'$ is sufficiently close to $y$, then $Z_{y'} \cap S = \emptyset$, proving our claim.

By Zariski's Main Theorem, there is an integral (complete) curve $C \subset Z$ with 
$\dim \nu(C)=0$ and $\nu(C) \in F$. Set $B:=\psi(C)$, and note that 
$\dim B=1$. Then $\nu(\psi^{-1}(B)) \supset F$ 
is tangent to $\sF$ since 
$\sF$ is regular in a neighborhood of $F$, and
$\dim \nu(\psi^{-1}(B)) = \dim F +1$. This implies that $B$ is tangent to $\sG$, yielding a contradiction and proving the lemma. 
\end{proof}

We believe that the following result will be useful when considering 
arbitrary (regular) foliations.

\begin{prop}
Let $X$ be a complex projective manifold, and let $\sF\subset T_X$ be a foliation. 
Let $\sH$ be the algebraic part of $\sF$. Suppose that
$\sH$ is induced by an almost proper map
$\phi \colon X \dashrightarrow Y$, and let $\sG$ be the foliation on $Y$ such that
$\sF=\phi^{-1}\sG$.
Then $\mu_A^{\textup{max}}\big((\phi^*\sG)^{**}\big)\le 0$ for any ample divisor $A$ on $X$.
\end{prop}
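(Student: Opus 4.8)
The plan is to argue by contradiction, driving the same mechanism as Step~3 in the proof of Proposition~\ref{proposition:regular_versus_algebraically_integrable_foliation}, but reading off the conclusion directly at the level of maximal slopes. Suppose that $\mu_A^{\textup{max}}\big((\phi^*\sG)^{**}\big) > 0$ for some ample divisor $A$ on $X$. We may assume that $\sG$ has positive rank, since otherwise $(\phi^*\sG)^{**}=0$ and the statement is vacuous; in particular $\dim Y \ge 1$. Replacing $A$ by a large multiple multiplies every $A$-slope by a positive constant, so we may assume $A$ is very ample. Since $\sH$ is the algebraic part of $\sF$ and is induced by $\phi$, after replacing $Y$ by a birational model we may take $Y$ to be the family of leaves of $\sH$, with universal family $\psi\colon Z\to Y$ and birational morphism $\nu\colon Z\to X$ satisfying $\phi=\psi\circ\nu^{-1}$ (see \ref{family_leaves}). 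By the defining property of the algebraic part, $\sG$ is purely transcendental: no positive-dimensional algebraic subvariety passes through a general point of $Y$ tangent to $\sG$.

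Next I would transfer the positivity from $X$ to $Z$. As $\nu$ is birational, $(\phi^*\sG)^{**}$ and $\nu_*\big((\psi^*\sG)^{**}\big)$ agree in codimension one, hence coincide as reflexive sheaves; combining Lemmas~\ref{lemma:semistability_reflexive hull} and~\ref{lemma:pull-back_semistability_birational_morphism} with the projection-formula identity $D\cdot\nu^*\alpha=\nu_*D\cdot\alpha$, one gets $\mu_A^{\textup{max}}\big((\phi^*\sG)^{**}\big)=\mu_{\nu^*A}^{\textup{max}}\big((\psi^*\sG)^{**}\big)$, which is positive by assumption. Applying Proposition~\ref{proposition:HN_on_the_base} with $H=A$ yields a saturated subsheaf $\sE\subseteq\sG$ such that, for a general complete intersection curve $C$ on $Z$ of type $m\nu^*A$ with $m$ large, ${\psi^*\sE}_{|C}$ is the maximally destabilizing subsheaf of ${\psi^*\sG}_{|C}$. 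Since a general such $C$ avoids the exceptional and singular loci, the maximal slope of ${\psi^*\sG}_{|C}$ is $m^{\dim X-1}$ times $\mu_{\nu^*A}^{\textup{max}}\big((\psi^*\sG)^{**}\big)$, so our hypothesis forces $\deg\big({\psi^*\sE}_{|C}\big)>0$.

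Then I would produce the forbidden algebraic leaf. Being a Harder--Narasimhan piece, ${\psi^*\sE}_{|C}$ is semistable, and with positive slope it is therefore an ample vector bundle on $C$. Setting $B:=\psi(C)$, the restriction $\psi_{|C}\colon C\to B$ is finite onto a curve (here $\dim Y\ge 1$ is used), so ampleness descends to give that $\sE_{|B}$ is ample. By \cite[Proposition 30]{kebekus_solaconde_toma07} we obtain $\textup{Hom}_C\big({\psi^*\sE}_{|C}\otimes{\psi^*\sE}_{|C},{\psi^*\sG}_{|C}/{\psi^*\sE}_{|C}\big)=0$, hence $\textup{Hom}_Y(\sE\otimes\sE,\sG/\sE)=0$; since $\sG$ is closed under the Lie bracket, the induced $\cO_Y$-linear bracket $\sE\otimes\sE\to\sG/\sE$ vanishes, so $\sE$ is a foliation. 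Finally Theorem~\ref{thm:BM}, applied to $\sE$ and the curve $B$ on which $\sE_{|B}$ is ample, shows that the leaf of $\sE$ through a general point of $B$ is algebraic, contradicting the purely transcendental property of $\sG$.

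The step I expect to be the main obstacle is the transfer in the second paragraph: matching $\mu_A^{\textup{max}}\big((\phi^*\sG)^{**}\big)$ on $X$ with the degree of the maximally destabilizing subsheaf of ${\psi^*\sG}_{|C}$ on $Z$, so that the birational morphism $\nu$ and the passage to reflexive hulls do not alter the sign of the slope. This requires checking, via Lemmas~\ref{lemma:semistability_reflexive hull} and~\ref{lemma:pull-back_semistability_birational_morphism}, that restriction to a general complete intersection curve, pushforward under $\nu$, and the double-dual operation all preserve the maximally destabilizing subsheaf together with the sign of its slope.
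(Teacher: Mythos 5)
Your proof follows exactly the route the paper intends: the paper's own proof of this proposition is a one-line appeal to Steps 1 and 3 of the proof of Proposition \ref{proposition:regular_versus_algebraically_integrable_foliation} together with the Mehta--Ramanathan theorem, and your contradiction argument --- Proposition \ref{proposition:HN_on_the_base} to produce a saturated $\sE\subseteq\sG$ whose pullback restricts to the maximally destabilizing subsheaf on a general complete intersection curve $C$, positivity of $\deg\big({\psi^*\sE}_{|C}\big)$ forcing ampleness, \cite[Proposition 30]{kebekus_solaconde_toma07} plus the Lie bracket to see that $\sE$ is a foliation, and Theorem \ref{thm:BM} to produce algebraic leaves contradicting pure transcendence --- is precisely that mechanism, with the slope transfer from $X$ to the curve handled in essentially the correct way.

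There is, however, one genuine gap: the reduction ``after replacing $Y$ by a birational model we may take $Y$ to be the family of leaves of $\sH$''. The proposition quantifies over all models $(Y,\sG,\phi)$ through which $\sH$ is induced, and the quantity $\mu_A^{\textup{max}}\big((\phi^*\sG)^{**}\big)$ is not a birational invariant of the model: if $g\colon Y\dashrightarrow Y'$ denotes the birational map to the family of leaves and $\sG'$ the induced foliation on $Y'$, then the reflexive sheaves $(\phi^*\sG)^{**}$ and $\big((g\circ\phi)^*\sG'\big)^{**}$ agree only away from the $\phi$-preimage of the locus where $g$ fails to be an isomorphism, and that preimage may contain divisors of $X$; consequently their first Chern classes, and hence the slopes of corresponding subsheaves, differ by classes of such vertical divisors, of no fixed sign. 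So your standing assumption $\mu_A^{\textup{max}}\big((\phi^*\sG)^{**}\big)>0$, made for the given model, is not known to survive the replacement, and the contradiction you derive concerns the wrong sheaf. The repair is simple and makes the replacement unnecessary: take $Z$ to be the normalization of the closure of the graph of the given $\phi$ in $X\times Y$, so that $\nu\colon Z\to X$ is birational, $\psi\colon Z\to Y$ is a morphism to the original $Y$, and $\phi=\psi\circ\nu^{-1}$; Proposition \ref{proposition:HN_on_the_base} requires only this data, so it applies with the given $\sG$, and the pure transcendence of the given $\sG$ (needed for the final contradiction) follows from the uniqueness, up to birational equivalence, of the algebraic/transcendental decomposition of $\sF$. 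With this substitution the rest of your argument goes through verbatim.
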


\begin{proof}
This follows easily from Steps 1 and 3 of the proof of Proposition \ref{proposition:regular_versus_algebraically_integrable_foliation} using 
the Mehta-Ramanathan theorem.
\end{proof}

\section{Proofs}

In this section we prove Theorem \ref{thm:main} and Corollary \ref{cor:kodaira_codim1}. 

\begin{thm}\label{thm:main2}
Let $X$ be a complex projective manifold, and let $\sF\subset T_X$ be a 
codimension $q$ foliation
with $0 < q < \dim X$. 
Suppose that the algebraic part $\sH$ of $\sF$ has a compact leaf.
Suppose furthermore that $-K_\sF \equiv A+E$ where 
$A$ is a $\mathbb{Q}$-ample $\mathbb{Q}$-divisor and
$E$ is an effective $\mathbb{Q}$-divisor.
Then $(X,E)$ is not log canonical.
\end{thm}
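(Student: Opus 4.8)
The plan is to argue by contradiction: assuming that $(X,E)$ is log canonical, I will produce a nonempty closed subset of $X$ forced to lie in every general leaf of the algebraic part of $\sF$, while those leaves are pairwise disjoint. The first move is to apply Proposition~\ref{proposition:regular_versus_algebraically_integrable_foliation} with $P=A$ and $N=E$, whose hypotheses all hold: $A$ is ample, hence nef; $E$ is effective with $(X,E)$ log canonical; and the algebraic part $\sH$ of $\sF$ has a compact leaf by assumption. This produces the family of leaves $\psi\colon Z\to Y$, the birational morphism $\nu\colon Z\to X$, the almost proper map $\phi=\psi\circ\nu^{-1}$, and a purely transcendental foliation $\sG$ on $Y$ with $\sF=\phi^{-1}\sG$, together with the crucial relation $K_\sH\equiv K_\sF$; in particular $-K_\sH\equiv -K_\sF\equiv A+E$.

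Next I would check that the \emph{general} leaf of $\sH$ is compact. Since $\sH$ has a compact leaf, the corresponding point of $Y$ lies off the closed set $\psi\big(\Exc(\nu)\big)$, so this set is a proper closed subset of $Y$ and $\nu$ restricts to an isomorphism over a neighbourhood of the general fibre $Z_y=\psi^{-1}(y)$. Hence the normalization $i\colon F\to X$ of the general leaf satisfies $F\cong Z_y$, and $i$ maps $F$ isomorphically onto a smooth projective leaf $L_y:=\nu(Z_y)\subset X$ contained in the locus where $\sH$ is a subbundle; consequently two distinct general leaves $L_y$ and $L_{y'}$ are disjoint. Moreover, since the boundary divisor $B$ of \eqref{eq:universal_canonical_bundle_formula} is $\nu$-exceptional and $Z_y$ avoids $\Exc(\nu)$, the general log leaf $(F,\Delta)$ has $\Delta=B_{|Z_y}=0$.

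I would then feed this into the results of Sections~4 and~5. As $X$ is smooth, $K_\sH$ is Cartier, so $A':=-K_\sH-E$ is $\mathbb{Q}$-Cartier and numerically equivalent to the ample divisor $A$, hence $\mathbb{Q}$-ample, with $-K_\sH=A'+E$. Restricting the log canonical pair $(X,E)$ to the general smooth projective leaf — exactly as in Step~2 of the proof of Proposition~\ref{proposition:regular_versus_algebraically_integrable_foliation} — and using $\Delta=0$ shows that $(F,\Delta+E_{|F})=(F,E_{|F})$ is log canonical; by Proposition~\ref{proposition:thm_alg_int} it is moreover not klt, so it carries a genuine log canonical center. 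Proposition~\ref{proposition:lc_center}, applied to $\sH$ with the decomposition $-K_\sH=A'+E$, then supplies a nonempty closed irreducible subset $T\subset X$ such that, for the general log leaf, some log canonical center $S$ of $(F,E_{|F})$ has image $i(S)=T$. Hence $T\subseteq i(F)=L_y$ for every general $y$.

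Comparing this with the disjointness established above gives $T\subseteq L_y\cap L_{y'}=\emptyset$ for two distinct general leaves, contradicting $T\neq\emptyset$; therefore $(X,E)$ is not log canonical. I expect the main obstacle to be the middle step: showing that a single compact leaf forces the \emph{general} leaf to be compact — so that $\nu$ really is an isomorphism there, the general leaves are honestly disjoint, and $\Delta=0$ — together with the verification that the common center $T$ produced by Proposition~\ref{proposition:lc_center} is nonempty, for which the non-kltness furnished by Proposition~\ref{proposition:thm_alg_int} is precisely what is needed. The rest is a matter of correctly assembling Propositions~\ref{proposition:regular_versus_algebraically_integrable_foliation}, \ref{proposition:thm_alg_int} and~\ref{proposition:lc_center}.
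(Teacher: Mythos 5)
Your proof is correct, but its endgame is genuinely different from the paper's. Both arguments start the same way: apply Proposition \ref{proposition:regular_versus_algebraically_integrable_foliation} to get $K_\sH\equiv K_\sF$ and the almost proper fibration structure, set $A':=-K_\sH-E\equiv A$ (hence $\mathbb{Q}$-ample), restrict log canonicity of $(X,E)$ to the general compact leaf via \cite[Theorem 4.8]{kollar97}, and use that the general log leaf has $\Delta=0$ (a fact the paper leaves implicit although its own contradiction also needs it; your derivation from the $\nu$-exceptionality of $B$, equivalently from part (3) of Proposition \ref{proposition:regular_versus_algebraically_integrable_foliation}, is the right one). From there the paper never touches Proposition \ref{proposition:lc_center}: it writes $-K_\sH=(A'+\varepsilon E)+(1-\varepsilon)E$ for $0<\varepsilon\ll 1$ and observes that $(X_y,(1-\varepsilon)E_{|X_y})$ is klt --- scaling down the boundary of an lc pair on a smooth variety makes it klt --- which contradicts Proposition \ref{proposition:thm_alg_int} directly. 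You instead keep the boundary $E$ unscaled, use Proposition \ref{proposition:thm_alg_int} only to guarantee that $(F,E_{|F})$ is lc but not klt (so a log canonical center exists), and then exploit the full strength of Proposition \ref{proposition:lc_center}: the common, nonempty image $T$ of the lc centers lies in the closure of every general leaf, while those closures, being compact fibers of the almost proper map, are pairwise disjoint --- contradiction. Both routes are valid. The paper's perturbation trick is more self-contained, since Proposition \ref{proposition:thm_alg_int} is proved in the paper whereas Proposition \ref{proposition:lc_center} is only quoted from \cite{fano_fols_2}; your argument is more geometric and realizes exactly the heuristic the paper states after Proposition \ref{proposition:lc_center}, namely that a point common to the closures of all general leaves is absurd once those closures are disjoint.
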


\begin{proof}Argue by contradiction, and assume that $(X,E)$ is log canonical.
There exist a dense open subset $X^\circ \subset X$, a normal variety $Y^\circ$, and a proper morphism $\phi^\circ\colon X^\circ \to Y^\circ$ such that 
$\sH$ is the foliation on $X$ induced by $\varphi^\circ$.
By Proposition \ref{proposition:regular_versus_algebraically_integrable_foliation},
we have $K_\sH\equiv K_\sF$.
Note that $\dim Y^\circ < \dim X^\circ$ since 
$K_\sF\not\equiv 0$. Set $A'=-K_\sH-E$. Then $A'$ is $\mathbb{Q}$-ample, and $-K_\sH=A'+E$. 

Let $y$ be a general point in $Y$, and denote by $X_y$ the corresponding fiber of $\phi$. The pair 
$(X_y,E_{|X_y})$ is log canonical by \cite[Theorem 4.8]{kollar97}. Also, note that $X_y$ is smooth. Let $0< \varepsilon\ll 1$ a rational number such that $A'+\varepsilon E$ is $\mathbb{Q}$-ample, and 
such that $(X_y,(1-\varepsilon) E_{|X_y})$ is klt.
Then $-K_\sH=(A'+\varepsilon E)+(1-\varepsilon) E$.
But 
this contradicts Proposition \ref{proposition:thm_alg_int} applied to $\sH$.
\end{proof}

Note that Theorem \ref{thm:main} and Corollary \ref{cor:kodaira_codim1} are consequences of
Theorem \ref{thm:mainbis} and Corollary \ref{cor:kodaira_codim1_bis} respectively using Lemma \ref{lemma:compact_leaf}.

\begin{thm}\label{thm:mainbis}
Let $X$ be a complex projective manifold, and let $\sF\subset T_X$ be a 
codimension $q$ foliation
with $0 < q < \dim X$. 
Suppose that the algebraic part of $\sF$ has a compact leaf.
Then $-K_\sF$ is not nef and big.
\end{thm}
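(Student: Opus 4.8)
The plan is to derive a contradiction from the assumption that $-K_\sF$ is nef and big, by reducing to the algebraically integrable setting where Theorem \ref{thm:main2} applies. First I would invoke the key observation of the paper, Proposition \ref{proposition:regular_versus_algebraically_integrable_foliation}: since the algebraic part $\sH$ of $\sF$ has a compact leaf, and since $-K_\sF$ being nef certainly gives a decomposition $-K_\sF \equiv P + N$ with $P=-K_\sF$ nef and $N=0$ effective (and $(X,0)$ trivially log canonical), the proposition yields $K_\sH \equiv K_\sF$. Thus $-K_\sH$ is also nef and big.

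Next I would exploit bigness to produce the ample-plus-effective decomposition required by Theorem \ref{thm:main2}. Since $-K_\sH \equiv -K_\sF$ is nef and big, Kodaira's lemma (for nef and big $\mathbb{Q}$-divisors) provides a decomposition $-K_\sH \equiv A + E$ where $A$ is a $\mathbb{Q}$-ample $\mathbb{Q}$-divisor and $E$ is an effective $\mathbb{Q}$-divisor. The remaining issue is to arrange that the pair $(X,E)$ is log canonical, which is precisely the hypothesis Theorem \ref{thm:main2} seeks to contradict. Here I would use the freedom in Kodaira's lemma: because $-K_\sH$ is big, for a suitable ample $A_0$ we can write $-K_\sH \equiv \delta A_0 + E_\delta$ for small rational $\delta > 0$, and as $\delta \to 0$ the effective part $E_\delta$ can be taken with arbitrarily small coefficients, so that $(X,E_\delta)$ is log canonical (indeed klt). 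The cleanest route is to first choose $A$ ample with $-K_\sF - A$ still big (possible since bigness is an open condition), and then represent $-K_\sF - A$ by a small effective $\mathbb{Q}$-divisor $E$ whose coefficients are small enough that $(X,E)$ is log canonical.

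Having produced $-K_\sF \equiv A + E$ with $A$ $\mathbb{Q}$-ample, $E$ effective, and $(X,E)$ log canonical, I would then directly apply Theorem \ref{thm:main2}, whose hypotheses are now exactly met: the algebraic part $\sH$ of $\sF$ has a compact leaf, and $-K_\sF \equiv A+E$ as above. The conclusion of that theorem is that $(X,E)$ is \emph{not} log canonical, contradicting our construction. This contradiction shows that $-K_\sF$ cannot be nef and big, completing the proof.

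The main obstacle I anticipate is the bookkeeping in the middle step, namely guaranteeing that the effective divisor $E$ can be chosen with $(X,E)$ log canonical while keeping $A$ genuinely $\mathbb{Q}$-ample. This is where one must be careful: bigness of $-K_\sF$ alone gives \emph{some} effective representative, but one needs the coefficients of $E$ to be uniformly small, which requires writing $-K_\sF \equiv A + E$ with $A$ absorbing most of the positivity. Since $-K_\sF$ is nef and big, the standard Kodaira-type argument works: pick an ample $H$, and for small rational $t>0$ the class $-K_\sF - tH$ remains big (openness of the big cone), hence $\mathbb{Q}$-linearly equivalent to an effective $\mathbb{Q}$-divisor $E_t$; choosing $t$ small forces the multiplicities of $E_t$ to be small, and rescaling further if necessary ensures $(X,E_t)$ is log canonical, with $A := tH$ the desired ample part. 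The rest is a formal application of the previously established Theorem \ref{thm:main2}.
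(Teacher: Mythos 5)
Your overall strategy is the same as the paper's: assume $-K_\sF$ is nef and big, produce a decomposition $-K_\sF \equiv A+E$ with $A$ $\mathbb{Q}$-ample, $E$ effective and $(X,E)$ log canonical, and contradict Theorem \ref{thm:main2}. (Your preliminary invocation of Proposition \ref{proposition:regular_versus_algebraically_integrable_foliation} is redundant: Theorem \ref{thm:main2} takes the compact-leaf hypothesis and the decomposition as input and derives $K_\sH\equiv K_\sF$ internally.) However, the middle step of your argument has a genuine gap. You claim that writing $-K_\sF \equiv \delta A_0 + E_\delta$ and letting $\delta\to 0$ allows one to take $E_\delta$ "with arbitrarily small coefficients". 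This is false: the numerical class of $E_\delta$ converges to $-K_\sF\not\equiv 0$, so for instance $E_\delta\cdot H^{\dim X-1}$ is bounded away from $0$; worse, if $\Gamma$ is a prime divisor contained in the stable base locus of $-K_\sF$, then $\mult_\Gamma E_\delta$ is bounded below by a fixed positive number (the asymptotic multiplicity of $-K_\sF$ along $\Gamma$) for every choice of effective representative, and this can exceed $1$, in which case no representative of $-K_\sF-\delta A_0$ gives a log canonical pair. The decisive sanity check is that your construction uses only the bigness of $-K_\sF$ and never its nefness: if bigness alone produced such a decomposition, Theorem \ref{thm:main2} would show that $-K_\sF$ can never be big, contradicting the paper's first example (a regular foliation on a $\mathbb{P}^1$-bundle over an abelian variety, whose algebraic part has compact leaves, with $-K_\sF$ big).

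The correct way to close the gap — and this is exactly what the paper does — is to scale a single fixed decomposition and absorb the surplus into the ample part using nefness. Write $-K_\sF = A+E$ with $A$ $\mathbb{Q}$-ample and $E$ effective (Kodaira's lemma). Since $X$ is smooth, $(X,\varepsilon E)$ is klt for any sufficiently small rational $\varepsilon>0$: this is a condition on the discrepancies of one fixed divisor $E$, not on a family of representatives. Then
$$-K_\sF-\varepsilon E \;=\; \varepsilon A+(1-\varepsilon)(-K_\sF)$$
is $\mathbb{Q}$-ample, being the sum of an ample and a nef class — this is the one place where nefness of $-K_\sF$ is used, and it is indispensable. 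Now $-K_\sF=(\,-K_\sF-\varepsilon E\,)+\varepsilon E$ is a decomposition with ample part, effective part, and klt (hence log canonical) pair, contradicting Theorem \ref{thm:main2}.
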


\begin{proof}
Write $-K_\sF=A+E$ where 
$A$ is a $\mathbb{Q}$-ample $\mathbb{Q}$-divisor, and 
$E$ is an effective $\mathbb{Q}$-divisor. Let $0<\varepsilon<1$ be a rational number such that 
$(X,\varepsilon E)$ is klt. Note that $A_\varepsilon:=-K_\sF-\varepsilon E$ is $\mathbb{Q}$-ample.
This contradicts Theorem \ref{thm:main2}, proving Theorem \ref{thm:main}.
\end{proof}

\begin{cor}\label{cor:kodaira_codim1_bis}
Let $X$ be a complex projective manifold, and let $\sF\subset T_X$ be a 
codimension $1$ foliation. 
Suppose that the algebraic part of $\sF$ has a compact leaf.
Suppose 
furthermore
that $-K_\sF$ is nef. Then $\kappa(X,-K_\sF) \le \dim X-1$.
\end{cor}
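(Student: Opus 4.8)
The plan is to deduce the corollary immediately from Theorem \ref{thm:mainbis}, the entire content being packaged there. Since $\sF$ has codimension $q=1$, and a codimension one foliation forces $\dim X \ge 2$, we are automatically in the range $0 < q < \dim X$; moreover the hypothesis that the algebraic part of $\sF$ has a compact leaf is exactly what Theorem \ref{thm:mainbis} requires. So the dimension/codimension constraints needed to invoke that theorem hold with no extra work.

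First I would recall that on the smooth variety $X$ the reflexive rank one sheaf $\det(\sF)$ is a line bundle, so that $-K_\sF$ is a genuine Cartier divisor and $\kappa(X,-K_\sF)$ is its Iitaka dimension in the usual sense. The key elementary fact I would then use is that for any divisor $D$ on a projective variety one has $\kappa(X,D)=\dim X$ if and only if $D$ is big. Next I would argue by contradiction: if $\kappa(X,-K_\sF)=\dim X$, then $-K_\sF$ is big; combined with the standing hypothesis that $-K_\sF$ is nef, this would make $-K_\sF$ nef and big, contradicting Theorem \ref{thm:mainbis}. Hence $\kappa(X,-K_\sF)<\dim X$, that is, $\kappa(X,-K_\sF)\le \dim X-1$.

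I expect no genuine obstacle in this argument, since the corollary is merely the translation of the conclusion ``$-K_\sF$ is not nef and big'' of Theorem \ref{thm:mainbis} into the language of Kodaira dimension. The only point one must be careful about is the equivalence ``maximal Kodaira dimension $\iff$ big'' together with the observation that $-K_\sF$ is Cartier on the smooth $X$, both of which are standard.
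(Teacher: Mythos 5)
Your proof is correct and follows essentially the same route as the paper: both deduce the corollary immediately from Theorem \ref{thm:mainbis}, which supplies all the content. The only cosmetic difference is the last step: the paper passes through the numerical dimension, noting that nef and not big forces $\nu(-K_\sF)\le \dim X-1$ and then invoking Kawamata's inequality $\kappa(X,-K_\sF)\le\nu(-K_\sF)$, whereas you argue by contradiction via the characterization of bigness as maximal Iitaka dimension --- an equivalent and if anything more elementary bookkeeping of the same fact.
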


\begin{proof}
By Theorem \ref{thm:mainbis}, we must have $\nu(-K_\sF) \le \dim X -1$. Thus
$\kappa(X,-K_\sF)\le \nu(-K_\sF)\le \dim X-1$ by \cite[Proposition 2.2]{kawamata}.
\end{proof}

\begin{question}\label{question:kodaira_versus_rank}
Given a regular foliation $\sF$ of rank r on a complex projective manifols with $-K_\sF$ nef, do we have 
$\kappa(X,-K_\sF) \le r$?
\end{question}

\section{Foliations with nef and abundant anti-canonical bundle}

In this section, we answer Question \ref{question:kodaira_versus_rank} under an additional assumption. First, we recall the definition and basic properties of nef and abundant divisors. 

\begin{say}[Abundant nef divisors]\label{abundant_nef_divisors}
Let $P\not\equiv 0$ be a nef $\mathbb{Q}$-Cartier $\mathbb{Q}$-divisor on a normal projective variety $X$.
Recall that the \emph{numerical dimension} of $P$ is the largest positive integer
$k$ such that $P^k\not\equiv 0$. 
We have $\kappa(X,P) \le \nu(P)$ by \cite[Proposition 2.2]{kawamata}, and we say that $P$ is \emph{abundant} if
equality holds. 

By \cite[Proposition 2.1]{kawamata}, $P$ is abundant if and only if there is a diagram of normal 
projective varieties

\centerline{
\xymatrix{
& Z\ar[ld]_{\nu}\ar[rd]^{f} & \\
X & & T \\
}
}
\noindent and a nef and big $\mathbb{Q}$-Cartier $\mathbb{Q}$-divsor $D$ on $T$ such that
$\nu^*P\sim_\mathbb{Q} f^*D$, where $\nu$ is a birational morphism, and $f$ is surjective. 
\end{say}

We will need the following easy observation.

\begin{lemma}\label{lemma:nef_abudant_divisor}
Let $P$ be a nef and abundant $\mathbb{Q}$-Cartier $\mathbb{Q}$-divisor on a normal projective variety $X$. 
With the above notation, write $D=A+E$ where 
$A$ is a $\mathbb{Q}$-ample $\mathbb{Q}$-divisor, and 
$E$ is an effective $\mathbb{Q}$-divisor. 
Let $F \subseteq X$ be a subvariety not contained in
$\nu\Big(\textup{Exc}(\nu)\cup f^{-1}\big(\textup{Supp}(E)\big)\Big)$. Then $P_{|F}$ is nef and abundant, and 
$\nu(P_{|F})=\dim f(\tilde{F})$, where $\tilde{F}$ denotes the proper transform of $F$ in $Z$.
\end{lemma}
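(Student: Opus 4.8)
The plan is to restrict the defining diagram of \ref{abundant_nef_divisors} to the proper transform $\tilde{F}$ of $F$ and then to recognize the restricted data as another instance of the very same criterion. Since $F$ is not contained in the union $\nu\big(\Exc(\nu)\cup f^{-1}(\Supp(E))\big)$, it is contained in neither $\nu\big(\Exc(\nu)\big)$ nor $\nu\big(f^{-1}(\Supp(E))\big)$. The first fact tells us that the generic point of $F$ lies in the locus where $\nu$ is an isomorphism, so that $\nu$ restricts to a birational morphism $g\colon \tilde{F}\to F$ with $g^*(P_{|F})=(\nu^*P)_{|\tilde{F}}$. The second fact gives $f(\tilde{F})\not\subseteq \Supp(E)$: otherwise $\tilde{F}\subseteq f^{-1}(\Supp(E))$ and hence $F=\nu(\tilde{F})\subseteq \nu\big(f^{-1}(\Supp(E))\big)$, a contradiction. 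Writing $T':=f(\tilde{F})$ and $h:=f_{|\tilde{F}}\colon \tilde{F}\to T'$ (and replacing $\tilde{F}$ by its normalization so as to meet the normality hypothesis of \ref{abundant_nef_divisors}), I obtain a birational $g$ and a surjective $h$.

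I would then restrict the relation $\nu^*P\sim_\mathbb{Q} f^*D$ to $\tilde{F}$, which yields
$$g^*(P_{|F})\sim_\mathbb{Q} h^*\big(D_{|T'}\big).$$
The key observation is that $D_{|T'}$ is again nef and big. It is nef because $D$ is nef; and writing $D_{|T'}=A_{|T'}+E_{|T'}$, the summand $A_{|T'}$ is ample while $E_{|T'}$ is a bona fide effective $\mathbb{Q}$-divisor precisely because $T'\not\subseteq \Supp(E)$, so that $D_{|T'}$ is the sum of an ample and an effective divisor, hence big. Feeding the diagram $F\xleftarrow{g}\tilde{F}\xrightarrow{h}T'$, together with the nef and big divisor $D_{|T'}$ and the displayed $\mathbb{Q}$-linear equivalence, back into the criterion of \ref{abundant_nef_divisors} shows that $P_{|F}$ is nef and abundant, which is the first assertion.

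For the numerical dimension I would compute
$$\nu(P_{|F})=\nu\big(g^*(P_{|F})\big)=\nu\big(h^*(D_{|T'})\big)=\dim T',$$
where the first equality is the birational invariance of the numerical dimension of a nef divisor (via the projection formula), and the last holds because $h$ is surjective and $D_{|T'}$ is nef and big: a general complete intersection $V\subseteq \tilde{F}$ of dimension $\dim T'$ maps finitely onto $T'$, so $\big(h^*D_{|T'}\big)^{\dim T'}\cdot V=e\,\big(D_{|T'}\big)^{\dim T'}>0$ for some positive integer $e$, while $\big(h^*D_{|T'}\big)^{\dim T'+1}=h^*\big((D_{|T'})^{\dim T'+1}\big)=0$ for dimension reasons. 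This gives $\nu(P_{|F})=\dim f(\tilde{F})$, as desired. The only genuinely delicate points are the bookkeeping that splits the single non-containment hypothesis into the two separate statements about $\Exc(\nu)$ and $f^{-1}(\Supp(E))$, and the minor care needed to arrange normality before invoking \ref{abundant_nef_divisors}; the remaining steps are formal manipulations of intersection numbers and pullbacks.
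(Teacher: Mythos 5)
Your proof is correct and takes essentially the same route as the paper: the paper's entire proof is the observation that $f(\tilde{F}) \not\subseteq \Supp(E)$, hence $D_{|f(\tilde{F})}$ is nef and big, followed by ``the lemma follows easily,'' and your restriction of the Kawamata diagram to $\tilde{F}$, re-application of the criterion of \ref{abundant_nef_divisors}, and the intersection-theoretic computation of $\nu\big(h^*(D_{|T'})\big)=\dim f(\tilde{F})$ are precisely what that elided step amounts to. The normality bookkeeping you flag is handled no more carefully in the paper itself, so it is not a point of divergence.
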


\begin{proof}
Note that 
$f(\tilde{F}) \not \subset E$, and thus $D_{|f(\tilde{F})}$ is nef and big. The lemma follows easily.
\end{proof}

The following observation will prove to be crucial.

\begin{lemma}\label{lemma:nef_abundant_restriction_leaf_foliation}
Let $X$ and $Y$ be normal projective varieties, and let $\phi \colon X \dashrightarrow Y$ be an almost proper map with general fiber $F$. Let $\sH$ be the induced foliation on $X$. 
Suppose that $\sH$ is $\mathbb{Q}$-Gorenstein.
Suppose furthermore that $-K_\sH \equiv P$ where $P$ is nef and abundant, and that ${-K_\sH}_{|F}=-K_F$ is nef and abundant. Then $\nu(-K_\sH) \le \nu (-K_F)$. 
\end{lemma}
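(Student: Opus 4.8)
The plan is to read off both numerical dimensions from the reduction of $-K_\sH$ provided by \ref{abundant_nef_divisors}, and then reduce the inequality to a statement about the interaction of that reduction with the fibration $\phi$. First I would fix a diagram $\pi\colon Z\to X$ birational, $f\colon Z\to T$ surjective, together with a nef and big $\mathbb{Q}$-divisor $D$ on $T$ such that $\pi^*P\sim_\mathbb{Q} f^*D$; since $\pi$ is birational and $f$ surjective this gives $\nu(-K_\sH)=\nu(P)=\dim T$. Writing $D=A+E$ with $A$ ample and $E\ge 0$, a general leaf $F$ of $\sH$ is not contained in $\pi\big(\textup{Exc}(\pi)\cup f^{-1}(\textup{Supp}(E))\big)$ (that set is a proper closed subset, whereas the leaves cover $X$), so Lemma \ref{lemma:nef_abudant_divisor} applies and yields $\nu(-K_F)=\nu(P_{|F})=\dim f(\tilde F)$, where $\tilde F$ denotes the proper transform of $F$ in $Z$.

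Next I would set $g:=f\circ\pi^{-1}\colon X\dashrightarrow T$ and consider $(\phi,g)\colon X\dashrightarrow Y\times T$, with image $V$ and projections $p\colon V\to Y$ and $q\colon V\to T$. Since $\phi$ is dominant with general fibre $F$ and $g(F)=f(\tilde F)$, the general fibre of $p$ has dimension $\dim f(\tilde F)$, whence $\dim V=\dim Y+\dim f(\tilde F)=\dim Y+\nu(-K_F)$. As $V\subseteq Y\times T$, one always has $\nu(-K_F)=\dim f(\tilde F)\le \dim T=\nu(-K_\sH)$ (this is the easy direction, that restriction drops numerical dimension), with equality precisely when $(\phi,g)$ is dominant, i.e. when a general leaf $F$ dominates $T$ under $g$. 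Thus the entire content of the lemma is the reverse inequality, equivalent to the dominance of $(\phi,g)$.

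To establish dominance I would argue by contradiction, assuming $c:=\nu(-K_\sH)-\nu(-K_F)=\dim T-\dim f(\tilde F)>0$. Failure of dominance means the subvarieties $R_t:=p\big(q^{-1}(t)\big)\subsetneq Y$ sweep out $Y$ as $t$ varies, and the idea is to turn this covering family into an honest fibration: I would produce a dominant rational map $\beta\colon Y\dashrightarrow B$ with $\dim B=c$ and a big $\mathbb{Q}$-divisor $M_B$ on $B$ such that $-K_\sH$ carries, numerically, the nef summand $\phi^*\beta^*M_B$, whose restriction to a general leaf is trivial. Since $-K_F$ is abundant, $\kappa\big(F,-K_F\big)=\nu(-K_F)\ge 0$, so the hypotheses of Proposition \ref{prop:pseudo_effective} (weak positivity) are met for the complementary, leaf-positive part of $-K_\sH$; feeding this in forces $K_\sH$ plus that part, that is $-\phi^*\beta^*M_B$, to be pseudo-effective. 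A nef class whose negative is pseudo-effective is numerically trivial (pair with a movable complete intersection class), so $\phi^*\beta^*M_B\equiv 0$, contradicting $\dim B=c>0$.

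The main obstacle is the extraction step in the last paragraph: realising the covering family $\{R_t\}$ as a fibration $\beta\colon Y\dashrightarrow B$ and splitting off the corresponding nef class $\phi^*\beta^*M_B$ from $-K_\sH$, while keeping enough control on effectivity for Proposition \ref{prop:pseudo_effective} (or, to avoid $\mathbb{Q}$-Cartier subtleties on $Z$, Corollary \ref{cor:pseudo_effective2}) to genuinely apply. This is exactly the sort of positivity-versus-transversality bookkeeping carried out in Steps 2 and 5 of Proposition \ref{proposition:regular_versus_algebraically_integrable_foliation}, and I expect the argument to proceed along the same lines.
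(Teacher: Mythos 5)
Your first two paragraphs are sound and agree with how the paper's proof begins: via \cite[Proposition 2.1]{kawamata} and Lemma \ref{lemma:nef_abudant_divisor} one gets $\nu(-K_\sH)=\dim T$ and $\nu(-K_F)=\dim f(\tilde F)$, so the lemma reduces to showing that a general leaf dominates $T$. The genuine gap is in your third paragraph, and it is not a removable technicality: there is no reason that the covering family $\{R_t\}$ (equivalently, the covering family $\{g(F_y)\}_{y\in Y}$ of $T$) can be realised as fibres of a fibration $\beta\colon Y\dashrightarrow B$ with $\dim B=c$. A covering family of codimension-$c$ subvarieties is in general not the fibre family of any map (lines in $\mathbb{P}^2$ already fail this), and the Campana-type quotient by the equivalence relation the family generates may have $0$-dimensional image, destroying your dimension count. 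Moreover, even granting $\beta$, you never construct the asserted splitting $-K_\sH\equiv \phi^*\beta^*M_B+L$ with $L$ effective, $(F,L_{|F})$ log canonical and $\kappa\big(F,K_F+L_{|F}\big)\ge 0$, which is what Proposition \ref{prop:pseudo_effective} (or Corollary \ref{cor:pseudo_effective2}) actually needs: the class complementary to a pulled-back nef summand is merely a numerical class, and turning it into such an $L$ is precisely the difficulty. So the step you yourself flag as ``the main obstacle'' is the entire content of the lemma and remains unproved.

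The paper closes this gap by a mechanism that never produces a fibration on $Y$ (or on $T$) from the covering family. It passes to the family of leaves and a finite base change $Y_1\to Y$ (Lemma \ref{lemma:reduced_fibers_codimension_1}), using Proposition \ref{proposition:regular_versus_algebraically_integrable_foliation} to write $\nu^*K_\sH$ as a relative canonical divisor, so that $-K_{Z_1/Y_1}\equiv P_1$ with $P_1$ the pullback of $P$; it then cuts $Y_1$ by a \emph{general complete intersection curve} $Y_2$ and restricts to $Z_2:=\psi_1^{-1}(Y_2)$. Under the contradiction hypothesis the images of the leaves move in $T_3$, so Lemma \ref{lemma:nef_abudant_divisor} and adjunction give $\nu(-K_{Z_2/Y_2})=\nu(-K_F)+1$. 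Now Proposition \ref{prop:singularities_foliations2} shows $Z_2$ has canonical singularities, hence $-K_{Z_2/Y_2}$ is $\psi_2$-semi-ample by \cite[Theorem 1.1]{fujino_kawamata}, and Ambro's canonical bundle formula \cite[Theorem 4.1]{ambro_cbf} descends everything to a klt pair $(T_2,B_2)$ fibred over the curve $Y_2$ with $-(K_{T_2/Y_2}+B_2)$ nef of numerical dimension $\dim T_2$; this contradicts \cite[Theorem 5.1]{fano_fols}. In short, the one-parameter restriction together with relative semi-ampleness and the canonical bundle formula supplies exactly the fibration structure that your proposal tries, without justification, to extract directly from the covering family.
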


\begin{proof}
Let $\psi\colon Z\to Y$ be the family of leaves of $\sH$, and let $\nu\colon Z \to X$ be the natural morphism (see \ref{family_leaves}).
By Lemma \ref{lemma:reduced_fibers_codimension_1},
there exists a finite surjective morphism
$\mu_1\colon Y_1 \to Y$ with $Y_1$ normal and connected satisfying the following property. If $Z_1$ denotes the normalization of the product $Y_1 \times_Y Z$, then the induced morphism $\psi_1\colon Z_1 \to Y_1$ has reduced fibers over codimension one points in $Y_1$. 
Denote by $\nu_1\colon Z_1 \to Z$ the natural morphism.

By Proposition \ref{proposition:regular_versus_algebraically_integrable_foliation} and \ref{pullback_foliations}, we have
$$K_{Z/Y}-R(\psi)\sim\nu^*K_\sH.$$
A straightforward computation shows that 
$$\nu_1^*\big(K_{Z/Y}-R(\psi)\big)=K_{Z_1/Y_1}.$$
We conclude that $-K_{Z_1/Y_1}$ is $\mathbb{Q}$-Cartier, and that
$-K_{Z_1/Y_1}\equiv P_1$, where  $P_1:=(\nu_1\circ\nu)^* P$.

By \cite[Proposition 2.1]{kawamata} applied to $P_1$,
there exist a surjective morphism $f_3\colon Z_3 \to T_3$
of normal projective varieties,
a birational morphism
$\nu_3\colon Z_3 \to Z_1$, and a nef and big $\mathbb{Q}$-Cartier $\mathbb{Q}$-divsor 
$D_3$ on $T_3$ such that
$\nu_3^*P_1\sim_\mathbb{Q} f_3^*D_3$.
We obtain a commutative diagram

\centerline{
\xymatrix{
Z_3 \ar[rr]^{\nu_3,\text{ birational}}\ar[d]^{f_3} 
 & & Z_1 \ar[rr]^{\nu_1,\text{ finite}}\ar[d]^{\psi_1} & & Z \ar[d]^{\psi}\ar[rr]^{\nu,\text{ birational}} && X \\
T_3 & & Y_1 \ar[rr]_{\mu_1,\text{ finite}} && Y. && \\
}
}
\noindent 
We view $F$ as a fiber of $\psi$.
Let $F_1$ be a fiber of $\psi_1$ such that $\nu_1(F_1)=F$.
Note that
the restriction of $\nu_1$ to 
$F_1$ induces an isomorphism $F_1\cong F$.
Let $\tilde{F}_1$ be the proper transform of $F_1$ in $Z_3$. We also have that 
${P_1}_{|F_1}= P_{|F}\equiv -K_F$.
By Lemma \ref{lemma:nef_abudant_divisor} above,
we obtain that
$\dim f_3(\tilde{F}_1) = \nu({P_1}_{|F_1})=\nu(-K_F)$.

We argue by contradiction, and assume that $\dim T_3 = \nu(-K_\sH) > \nu (-K_F)$.
Let $Y_2 \subset Y_1$ be a general complete intersection curve, and
set $Z_2:=\psi_1^{-1}(Y_2) \subset Z_1$. 
It comes with a morphism $\psi_2\colon Z_2 \to Y_2$.
Let $\tilde{Z_2}$ be the proper transform of $Z_2$ in $Z_3$.
Then $\dim f_3(\tilde{Z_2}) = \dim f_3(\tilde{F}) + 1= \nu(-K_F) +1$, and thus
$\nu\big( {-K_{Z_1/Y_1}}_{|Z_2} \big)= \nu({P_1}_{|Z_2})=\nu(-K_F) +1$ by Lemma \ref{lemma:nef_abudant_divisor} again.
By the adjunction formula, we have ${K_{Z_1/Y_1}}_{|Z_2}\sim K_{Z_2/Y_2}$. This gives
$$\nu\big( -K_{Z_2/Y_2} \big)=\nu(-K_F) +1.$$
By proposition \ref{prop:singularities_foliations2}, 
$Z_2$
has canonical singularities, and 
by \cite[Theorem 1.1]{fujino_kawamata}, 
$-K_{Z_2/Y_2}$ is $\psi_2$-semi-ample. Therefore, there exist a $Y_2$-morphism
$f_2\colon Z_2 \to T_2$ with connected fibers onto a normal projective variety, and a $\mathbb{Q}$-Cartier $\mathbb{Q}$-divisor $M$ on $T_2$ ample over $Y_2$ such that
\begin{equation*}
-K_{Z_2/Y_2}\sim_\mathbb{Q} f_2^*M.
\end{equation*}
\noindent By Ambro's canonical bundle formula \cite[Theorem 4.1]{ambro_cbf}, there exists an effective $\mathbb{Q}$-divisor $B_2$ on $T_2$ such that $(T_2,B_2)$ is klt and
\begin{equation*}
K_{Z_2}\sim_\mathbb{Q} f_2^*(K_{T_2}+B_2).
\end{equation*}
Thus
\begin{equation*}
K_{Z_2/Y_2}\sim_\mathbb{Q} f_2^*(K_{T_2/Y_2}+B_2),
\end{equation*}
and hence
$-(K_{T_2/Y_2}+B_2)$ is nef with 
$$\nu\big(-(K_{T_2/Y_2}+B_2)\big)=\nu(-K_{Z_2/Y_2})  =\nu(-K_F) +1=\dim T_2.$$ 
But this contradicts \cite[Theorem 5.1]{fano_fols} (see also Proposition \ref{proposition:thm_alg_int}), 
completing the proof of the lemma.
\end{proof}

\begin{cor}\label{cor:codim1_abundant_max_kod_dim_1}
Let $X$ be a complex projective manifold, let $Y$ be a normal projective variety, and let $\phi \colon X \dashrightarrow Y$ be an almost proper map with general fiber $F$. Let $\sH$ be the induced foliation on $X$. 
Suppose that $\sH$ is $\mathbb{Q}$-Gorenstein.
Suppose furthermore that $-K_\sH$ is nef and abundant with $\nu(-K_\sF)=\dim X - dim Y$. 
Then 
$-K_F$ is nef and big, and $-K_\sF$ is semiample.
\end{cor}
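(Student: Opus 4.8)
The plan is to deduce both assertions from Lemma \ref{lemma:nef_abundant_restriction_leaf_foliation} and the abundance structure recorded in \ref{abundant_nef_divisors}. Write $n=\dim X$ and $l=\dim Y$, so the general fiber $F$ of $\phi$ is smooth of dimension $n-l=\nu(-K_\sH)$. Since $\sH$ is induced by $\phi$, formula \eqref{morphism_fol} together with part (3) of Proposition \ref{proposition:regular_versus_algebraically_integrable_foliation} (which applies here, as $\sH$ is its own algebraic part and the general leaf is compact, so the boundary $B$ of \ref{family_leaves} vanishes) shows that ${(-K_\sH)}_{|F}\sim_\mathbb{Q}-K_F$. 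This identification is what will allow $-K_F$ to be read off the restriction of $-K_\sH$.

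First I would prove that $-K_F$ is nef and big. As the restriction of a nef divisor, ${(-K_\sH)}_{|F}=-K_F$ is nef, and Lemma \ref{lemma:nef_abudant_divisor} shows it is in fact nef and abundant (the general leaf $F$ avoids the relevant locus of the abundance diagram). Lemma \ref{lemma:nef_abundant_restriction_leaf_foliation} then yields $\nu(-K_\sH)\le\nu(-K_F)$. Since one always has $\nu(-K_F)\le\dim F=n-l=\nu(-K_\sH)$, all three quantities coincide; thus $-K_F$ is a nef divisor whose numerical dimension equals $\dim F$, so $(-K_F)^{\dim F}>0$ and $-K_F$ is big.

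Next I would prove semiampleness. As $F$ is smooth and $-K_F$ is nef and big, the base-point-free theorem gives that $-K_F$ is semiample. Now invoke the abundance diagram for $-K_\sH$ (see \ref{abundant_nef_divisors}): a birational morphism $\nu\colon Z\to X$, a surjective morphism with connected fibers $f\colon Z\to T$, and a nef and big $\mathbb{Q}$-divisor $D$ on $T$ with $\nu^*(-K_\sH)\sim_\mathbb{Q}f^*D$ and $\dim T=\nu(-K_\sH)=\dim F$. Let $\tilde F$ be the proper transform of the general leaf $F\subseteq X$ in $Z$. Applying Lemma \ref{lemma:nef_abudant_divisor} gives $\dim f(\tilde F)=\nu\big({(-K_\sH)}_{|F}\big)=\nu(-K_F)=\dim F=\dim T$, so $f$ restricts to a generically finite surjective morphism $f_{|\tilde F}\colon\tilde F\to T$, along which $D$ pulls back to $(\nu_{|\tilde F})^*{(-K_\sH)}_{|F}\sim_\mathbb{Q}(\nu_{|\tilde F})^*(-K_F)$, a semiample divisor. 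Factoring $f_{|\tilde F}$ through its Stein factorization $\tilde F\to T'\to T$ (a morphism with connected fibers followed by a finite one), semiampleness descends first across the birational contraction and then across the finite map, so $D$ is semiample on $T$. Consequently $f^*D\sim_\mathbb{Q}\nu^*(-K_\sH)$ is semiample on $Z$, and since $\nu$ is birational and $-K_\sH$ is nef, $-K_\sH$ is semiample on $X$.

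The main obstacle is the final paragraph: one must know that the general leaf \emph{dominates} the base $T$ of the abundance fibration, so that semiampleness can be transported from $F$ to $T$, and that semiampleness genuinely descends along the generically finite surjective morphism $f_{|\tilde F}$. Both points are handled by combining the numerical-dimension computation of Lemma \ref{lemma:nef_abudant_divisor} (which forces $\dim f(\tilde F)=\dim T$) with the standard descent of semiampleness along morphisms with connected fibers and along finite surjective morphisms; the equality ${(-K_\sH)}_{|F}\sim_\mathbb{Q}-K_F$ coming from the vanishing of $B$ is precisely what realizes the semiample divisor $-K_F$ as the pullback of $D$.
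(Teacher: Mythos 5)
Your proof is correct and follows essentially the same route as the paper: Lemma \ref{lemma:nef_abudant_divisor} and Lemma \ref{lemma:nef_abundant_restriction_leaf_foliation} force $\nu(-K_F)=\dim F$ (hence $-K_F$ nef, big, and semiample), and then restricting the Kawamata diagram $\nu^*(-K_\sH)\sim_\mathbb{Q}f^*D$ to the proper transform of $F$, where $f$ becomes generically finite onto $T$, lets semiampleness descend from $-K_F$ to $D$ and hence to $-K_\sH$. The only differences are that you spell out what the paper leaves implicit — the identification ${(-K_\sH)}_{|F}\sim_\mathbb{Q}-K_F$ (which follows directly from $\sH$ restricting to $T_F$ near a general fiber, without needing Proposition \ref{proposition:regular_versus_algebraically_integrable_foliation}) and the Stein-factorization descent of semiampleness.
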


\begin{proof}
Note first that $-K_F$ is nef and abundant by Lemma \ref{lemma:nef_abudant_divisor}. Then, apply
Lemma \ref{lemma:nef_abundant_restriction_leaf_foliation} to conclude that $-K_F$ is nef and big. In particular, $-K_F$ is semiample.

By \cite[Proposition 2.1]{kawamata} applied to $-K_\sF$,
there exist a surjective morphism $f \colon Z\to T$
of normal projective varieties,
a birational morphism
$\nu\colon Z \to X$, and a nef and big $\mathbb{Q}$-Cartier $\mathbb{Q}$-divsor 
$D$ on $T$ such that
\begin{equation}\label{eq:nef_abundant}
-\nu^*K_\sF\sim_\mathbb{Q} f^*D.
\end{equation}
Let $\tilde{F}$ denote the proper transform of $F$ in $Z$.
The restriction $f_{|\tilde{F}} \colon \tilde{F} \to T$ of $f$ to $\tilde{F}$ is generically finite
by Lemma \ref{lemma:nef_abudant_divisor}, and \eqref{eq:nef_abundant} reads 
$$-\nu_{|\tilde{F}}^*K_F\sim_\mathbb{Q}f_{|\tilde{F}}^*D.$$
Since $-K_F$ is semiample, we conclude that $D$ is semiample as well, and hence so is $-K_\sF$.
\end{proof}

\begin{cor}\label{cor:codim1_abundant_max_kod_dim}
Let $X$ be a complex projective manifold, and let
$\phi\colon X \to Y$ be a morphism onto a smooth complete curve with connected general fiber $F$. Denote by $\sF$ the induced foliation on $X$. Suppose that $-K_\sF$ is nef and $\kappa(X,-K_\sF)=\dim X -1$. Then 
$-K_F$ is nef and big, and $-K_\sF$ is semiample.
\end{cor}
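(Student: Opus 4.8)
The plan is to deduce the statement from Corollary \ref{cor:codim1_abundant_max_kod_dim_1}, whose hypotheses coincide with ours except that it assumes $-K_\sF$ to be nef and \emph{abundant} with $\nu(-K_\sF)=\dim X-\dim Y$, in place of the weaker requirement $\kappa(X,-K_\sF)=\dim X-1$. The entire content is therefore to promote the Iitaka-dimension hypothesis to abundance; once this is done the conclusion is immediate, since $\dim Y=1$ gives $\dim X-1=\dim X-\dim Y$.

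First I would record the relevant structure of $\sF$. As $\sF$ is induced by the morphism $\phi\colon X\to Y$ onto the smooth curve $Y$, it is algebraically integrable and equals its own algebraic part; a general fiber $F$ of $\phi$ lies in the locus where $\sF$ is a subbundle and is a smooth compact leaf, so the algebraic part of $\sF$ has a compact leaf. Moreover, since $X$ is smooth and $\sF$ has codimension one, $\det\sF$ is a line bundle, so $K_\sF$ is Cartier and $\sF$ is $\mathbb{Q}$-Gorenstein.

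Because the algebraic part of $\sF$ has a compact leaf, Theorem \ref{thm:mainbis} shows that $-K_\sF$ is not nef and big. Since $-K_\sF$ is nef by hypothesis, it cannot be big, whence $\nu(-K_\sF)\le \dim X-1$. On the other hand $\dim X-1=\kappa(X,-K_\sF)\le \nu(-K_\sF)$ by \cite[Proposition 2.2]{kawamata}, so $\nu(-K_\sF)=\dim X-1=\kappa(X,-K_\sF)$ and $-K_\sF$ is abundant with $\nu(-K_\sF)=\dim X-\dim Y$. Applying Corollary \ref{cor:codim1_abundant_max_kod_dim_1} then yields that $-K_F$ is nef and big and that $-K_\sF$ is semiample.

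The only substantive step is the abundance upgrade, and its essential ingredient is the non-bigness supplied by Theorem \ref{thm:mainbis}; the reverse inequality $\kappa\le\nu$ is automatic, and the remaining verifications (that a general fiber is a compact leaf of the algebraic part, and that $\sF$ is $\mathbb{Q}$-Gorenstein) are routine. I do not expect any genuine obstacle beyond correctly matching the present setup to the hypotheses of the earlier corollary.
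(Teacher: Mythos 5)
Your proposal is correct and follows essentially the same route as the paper: the paper's own proof is the one-line observation that $-K_\sF$ is nef and abundant by Theorem \ref{thm:main} (i.e., nefness plus non-bigness forces $\nu(-K_\sF)\le\dim X-1$, which together with $\kappa\le\nu$ gives abundance), after which Corollary \ref{cor:codim1_abundant_max_kod_dim_1} applies. Your version merely spells out this abundance upgrade explicitly, using Theorem \ref{thm:mainbis} in place of Theorem \ref{thm:main}, which is an equivalent choice here since $\sF$ coincides with its algebraic part and its general fiber is a compact leaf.
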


\begin{proof}
Note that $-K_\sF$ is nef and abundant by Theorem \ref{thm:main}. 
The claim then follows from Corollary \ref{cor:codim1_abundant_max_kod_dim_1}.
\end{proof}

The proof of the next lemma is similar to that of Lemma \ref{lemma:nef_abundant_restriction_leaf_foliation}.

\begin{lemma}\label{lemma:nef_abundant_restriction_leaf_fibration}
Let $\phi\colon X \to Y$ be a morphism of normal projective varieties with general fiber $F$. Suppose that there exists a $\mathbb{Q}$-Cartier divisor $K$ on $X$ such that 
$K_{|\phi^{-1}(Y_{\textup{ns}})}\sim K_{\phi^{-1}(Y_{\textup{ns}})/Y_{\textup{ns}}}$. Suppose furthermore that
$K$ is nef and abundant. Then $\nu(-K) \le \nu (-K_F)$. 
\end{lemma}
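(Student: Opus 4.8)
The plan is to follow the proof of Lemma~\ref{lemma:nef_abundant_restriction_leaf_foliation} closely. The essential simplification is that here $\phi$ is a genuine morphism and $K$ already coincides with the relative canonical divisor $K_{X/Y}$ over $Y_{\textup{ns}}$, so neither the family of leaves nor the ramification divisor $R(\psi)$ intervenes; in particular the preliminary base change of Lemma~\ref{lemma:reduced_fibers_codimension_1} can be dispensed with. Since the quantity $\nu(-K)$ appears, I treat $-K$ as the nef and abundant divisor, noting that $-K\sim-K_{X/Y}$ over $Y_{\textup{ns}}$.

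First I would fix the abundant structure of $-K$ supplied by \ref{abundant_nef_divisors}: a birational morphism $\nu\colon Z\to X$, a surjective morphism $f\colon Z\to T$ with $\dim T=\nu(-K)$, and a nef and big $\mathbb{Q}$-divisor $D=A+E$ on $T$ (with $A$ ample, $E$ effective) such that $\nu^*(-K)\sim_\mathbb{Q}f^*D$. For a general fiber $F$ of $\phi$ one has $K_{|F}\sim(K_{X/Y})_{|F}\sim K_F$, so $-K_{|F}\sim-K_F$; hence by Lemma~\ref{lemma:nef_abudant_divisor} the divisor $-K_F$ is nef and abundant, and $\nu(-K_F)=\dim f(\tilde F)$, where $\tilde F\subset Z$ denotes the proper transform of $F$.

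Next I would argue by contradiction, assuming $\nu(-K)>\nu(-K_F)$, and cut the base down to a curve. Choosing a general complete intersection curve $Y_2\subset Y$ contained in $Y_{\textup{ns}}$ (in particular with $X_2:=\phi^{-1}(Y_2)$ not contained in the locus excluded by Lemma~\ref{lemma:nef_abudant_divisor}), I obtain an induced morphism $\phi_2\colon X_2\to Y_2$ with $X_2$ normal, $Y_2$ a smooth curve, smooth general fiber $F$, and, by adjunction, $K_{|X_2}\sim K_{X_2/Y_2}$. Applying Lemma~\ref{lemma:nef_abudant_divisor} to the proper transform of $X_2$ in $Z$ gives $\nu(-K_{X_2/Y_2})=\nu(-K_F)+1$. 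Proposition~\ref{prop:singularities_foliations2}, applied to $\phi_2$ with the nef divisor $-K_{X_2/Y_2}$ and zero boundary (here $(F,0)$ is canonical since $F$ is smooth), shows that $X_2$ has canonical, hence klt, singularities. By \cite[Theorem 1.1]{fujino_kawamata} the divisor $-K_{X_2/Y_2}$ is $\phi_2$-semiample, so there are a $Y_2$-morphism $f_2\colon X_2\to T_2$ with connected fibers and a $\mathbb{Q}$-divisor $M$ on $T_2$, ample over $Y_2$, with $-K_{X_2/Y_2}\sim_\mathbb{Q}f_2^*M$. Finally Ambro's canonical bundle formula \cite[Theorem 4.1]{ambro_cbf} produces an effective $\mathbb{Q}$-divisor $B_2$ with $(T_2,B_2)$ klt and $K_{X_2/Y_2}\sim_\mathbb{Q}f_2^*(K_{T_2/Y_2}+B_2)$, whence $-(K_{T_2/Y_2}+B_2)$ is nef of numerical dimension $\nu(-K_F)+1=\dim T_2$. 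This contradicts \cite[Theorem 5.1]{fano_fols}, completing the proof.

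The main obstacle is the numerical bookkeeping in the reduction to the curve: one must verify, via Lemma~\ref{lemma:nef_abudant_divisor}, that cutting with a general complete intersection curve raises the numerical dimension of the relevant nef divisor by exactly one while leaving the general fiber untouched, so that $\nu(-K_{X_2/Y_2})=\nu(-K_F)+1$ holds on the nose. One must also choose $Y_2$ generally enough that $\phi_2\colon X_2\to Y_2$ inherits all the hypotheses — $X_2$ normal, $Y_2$ a smooth curve, connected smooth general fiber, and $K_{|X_2}\sim K_{X_2/Y_2}$ — needed to run Proposition~\ref{prop:singularities_foliations2}, Fujino--Kawamata, and Ambro's formula. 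Arranging these by a general choice of $Y_2$ (after, if necessary, replacing $\phi$ by the connected part of its Stein factorization) is exactly where the care lies, precisely as in the proof of Lemma~\ref{lemma:nef_abundant_restriction_leaf_foliation}.
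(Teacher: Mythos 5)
Your proposal is correct and is essentially the paper's own proof: the paper establishes this lemma only by declaring it ``similar to that of Lemma~\ref{lemma:nef_abundant_restriction_leaf_foliation}'', and your adaptation --- dispensing with the family of leaves and with the base change of Lemma~\ref{lemma:reduced_fibers_codimension_1} since $\phi$ is a morphism and $K$ already agrees with the relative canonical over $Y_{\textup{ns}}$, reading the (evidently mistyped) hypothesis as $-K$ nef and abundant, then running Lemma~\ref{lemma:nef_abudant_divisor}, adjunction along a general complete intersection curve in the base, Proposition~\ref{prop:singularities_foliations2}, Fujino--Kawamata, Ambro's formula, and the final contradiction with \cite[Theorem 5.1]{fano_fols} --- is exactly the intended adaptation. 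The one blemish is your parenthetical ``$(F,0)$ is canonical since $F$ is smooth'': with $X$ merely normal the general fiber need not be smooth, so the canonicity of $(F,0)$ required by Proposition~\ref{prop:singularities_foliations2} is really an implicit hypothesis; but this imprecision is inherited from the statement of the lemma and from the paper's own treatment of Lemma~\ref{lemma:nef_abundant_restriction_leaf_foliation}, and it is harmless in the paper's applications, where $X$ is smooth.
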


The same argument used in the proof of Corollary \ref{cor:codim1_abundant_max_kod_dim}
shows that the following holds. One only needs to replace the use of 
Theorem \ref{thm:main} with \cite[Theorem 5.1]{fano_fols}, and the use of  
Lemma \ref{lemma:nef_abundant_restriction_leaf_foliation} 
with Lemma \ref{lemma:nef_abundant_restriction_leaf_fibration}.

\begin{cor}\label{lemma:codim1_abundant_morphism_max_kod_dim}
Let $X$ be a complex projective manifold, and let
$\phi\colon X \to Y$ be a morphism onto a smooth complete curve with connected general fiber $F$. 
Suppose that $-K_{X/Y}$ is nef and $\kappa(X,-K_{X/Y})=\dim X -1$. Then 
$-K_F$ is nef and big, and $-K_{X/Y}$ is semiample.
\end{cor}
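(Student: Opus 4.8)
The plan is to transcribe the proof of Corollary \ref{cor:codim1_abundant_max_kod_dim} almost verbatim, replacing the foliated canonical class $K_\sF$ by the relative canonical divisor $K_{X/Y}$, Theorem \ref{thm:main} by \cite[Theorem 5.1]{fano_fols}, and Lemma \ref{lemma:nef_abundant_restriction_leaf_foliation} by Lemma \ref{lemma:nef_abundant_restriction_leaf_fibration}, exactly as indicated. Since $Y$ is a smooth curve, $K_{X/Y} = K_X - \phi^*K_Y$ is a genuine $\mathbb{Q}$-Cartier divisor on $X$ agreeing with the relative canonical class over all of $Y$, so every restriction and every numerical-dimension statement below is literally meaningful.

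First I would verify that $-K_{X/Y}$ is nef and \emph{abundant}. From $\kappa(X,-K_{X/Y}) = \dim X - 1$ and Kawamata's inequality $\kappa \le \nu$ (\cite[Proposition 2.2]{kawamata}) one gets $\nu(-K_{X/Y}) \ge \dim X - 1$. Since \cite[Theorem 5.1]{fano_fols} prevents $-K_{X/Y}$ from being nef and big, one also has $\nu(-K_{X/Y}) \le \dim X - 1$; hence $\nu(-K_{X/Y}) = \kappa(X,-K_{X/Y}) = \dim X - 1 = \dim F$, which is precisely abundance.

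Next I would analyse the general fiber. Taking the abundance datum of \ref{abundant_nef_divisors} for $P = -K_{X/Y}$, Lemma \ref{lemma:nef_abudant_divisor} shows that $-K_F = {-K_{X/Y}}_{|F}$ is nef and abundant with $\nu(-K_F) = \dim f(\tilde{F})$, where $\tilde{F}$ is the proper transform of $F$. Feeding $K := K_{X/Y}$ (whose negative is the nef and abundant class just produced) into Lemma \ref{lemma:nef_abundant_restriction_leaf_fibration} yields $\dim X - 1 = \nu(-K_{X/Y}) \le \nu(-K_F) \le \dim F = \dim X - 1$, so $\nu(-K_F) = \dim F$. A nef and abundant divisor of maximal numerical dimension is big, so $-K_F$ is nef and big, hence semiample by the base-point-free theorem applied to the weak Fano manifold $F$.

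Finally I would upgrade semiampleness from $-K_F$ to $-K_{X/Y}$. Applying \cite[Proposition 2.1]{kawamata} to the nef and abundant class $-K_{X/Y}$ produces a birational morphism $\nu\colon Z \to X$, a surjection $f\colon Z \to T$, and a nef and big $\mathbb{Q}$-divisor $D$ on $T$ with $-\nu^*K_{X/Y} \sim_\mathbb{Q} f^*D$. Restricting to $\tilde{F}$, the equality $\dim f(\tilde{F}) = \nu(-K_F) = \dim T$ makes $f_{|\tilde{F}}$ generically finite and surjective onto $T$, and the relation becomes $-\nu_{|\tilde{F}}^*K_F \sim_\mathbb{Q} f_{|\tilde{F}}^*D$. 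Since $-K_F$ is semiample, so is $f_{|\tilde{F}}^*D$, and semiampleness descends along the surjection $f_{|\tilde{F}}$ to give that $D$ is semiample; therefore $-\nu^*K_{X/Y} \sim_\mathbb{Q} f^*D$ is semiample, and as $\nu$ is birational $-K_{X/Y}$ is semiample as well. The only genuinely delicate point, everything else being a routine transcription, is confirming that the hypotheses of Lemma \ref{lemma:nef_abundant_restriction_leaf_fibration} hold in this relative setting — namely the existence of a global $\mathbb{Q}$-Cartier $K$ restricting to the relative canonical class over $Y_{\textup{ns}}$, which is automatic here because $Y$ is a smooth curve — since it is that lemma's bound $\nu(-K) \le \nu(-K_F)$ that forces the bigness of $-K_F$ and thereby drives the entire argument.
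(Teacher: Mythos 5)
Your proposal is correct and is essentially identical to the paper's own proof: the paper establishes this corollary precisely by re-running the argument of Corollary \ref{cor:codim1_abundant_max_kod_dim} (which factors through Corollary \ref{cor:codim1_abundant_max_kod_dim_1}), replacing Theorem \ref{thm:main} by \cite[Theorem 5.1]{fano_fols} and Lemma \ref{lemma:nef_abundant_restriction_leaf_foliation} by Lemma \ref{lemma:nef_abundant_restriction_leaf_fibration}, exactly as you do. Your explicit verifications --- that $K_{X/Y}$ is a genuine $\mathbb{Q}$-Cartier divisor satisfying the hypothesis of Lemma \ref{lemma:nef_abundant_restriction_leaf_fibration} because $Y$ is a smooth curve, and the abundance computation $\nu(-K_{X/Y})=\kappa(X,-K_{X/Y})=\dim X-1$ --- are the same steps left implicit in the paper.
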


Note that Theorem \ref{thm:main3} is an immediate consequence of Theorem \ref{thm:main3bis}
and Lemma \ref{lemma:compact_leaf}.

\begin{thm}\label{thm:main3bis}
Let $X$ be a complex projective manifold, 
and let $\sF\subset T_X$ be a 
codimension $q$ foliation
with $0 < q < \dim X$. 
Suppose that the algebraic part $\sH$ of $\sF$ has a compact leaf.
Suppose 
furthermore
that $-K_\sF $ is nef and abundant.
Then $\kappa(X,-K_\sF) \le \dim X-q$, and equality holds only if 
$\sF$ is algebraically integrable.
\end{thm}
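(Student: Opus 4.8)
The plan is to reduce everything to the structure furnished by Proposition \ref{proposition:regular_versus_algebraically_integrable_foliation} and then to apply Lemma \ref{lemma:nef_abundant_restriction_leaf_foliation} on a general leaf. First I would invoke Proposition \ref{proposition:regular_versus_algebraically_integrable_foliation} with $P=-K_\sF$ and $N=0$; this is legitimate, since $-K_\sF$ is nef and $(X,0)$ is log canonical because $X$ is smooth. It produces an almost proper map $\phi\colon X\dashrightarrow Y$ inducing $\sH$, a purely transcendental foliation $\sG$ on $Y$ with $\sF=\phi^{-1}\sG$, and, most importantly, $K_\sH\equiv K_\sF$. Since pulling back preserves the codimension, $\sG$ has codimension $q$ on $Y$, whence $q\le\dim Y$; and a general fiber $F$ of $\phi$, which is the closure of a general leaf of $\sH$, has dimension $\dim F=\dim X-\dim Y$.

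Next I would restrict $-K_\sF$ to a general leaf. Because $-K_\sH\equiv-K_\sF$ is nef and abundant and $\sH$ is $\mathbb{Q}$-Gorenstein (indeed $K_\sH$ is Cartier, $X$ being smooth), I would fix an abundance diagram for $-K_\sF$ as in \ref{abundant_nef_divisors} and observe that a general $F$ is not contained in the bad locus $\nu\big(\Exc(\nu)\cup f^{-1}(\Supp(E))\big)$. Lemma \ref{lemma:nef_abudant_divisor} then shows that ${-K_\sF}_{|F}$ is nef and abundant; and since $F$ is tangent to $\sH$ one has ${K_\sF}_{|F}\sim{K_\sH}_{|F}\sim K_F$ (as recorded in Step 2 of the proof of Proposition \ref{proposition:regular_versus_algebraically_integrable_foliation}), so $-K_F$ is nef and abundant. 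Now Lemma \ref{lemma:nef_abundant_restriction_leaf_foliation} applies and yields the crucial inequality $\nu(-K_\sH)\le\nu(-K_F)$.

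The stated bound then follows by concatenation. Since $-K_\sF$ is nef and abundant, $\kappa(X,-K_\sF)=\nu(-K_\sF)=\nu(-K_\sH)$, and combining with the previous step,
\[
\kappa(X,-K_\sF)=\nu(-K_\sH)\le\nu(-K_F)\le\dim F=\dim X-\dim Y\le\dim X-q,
\]
the last inequality being $q\le\dim Y$. For the equality case, $\kappa(X,-K_\sF)=\dim X-q$ forces every inequality above to be an equality, in particular $\dim Y=q$. Then $\sH$ has rank $\dim F=\dim X-\dim Y=\dim X-q=\textup{rank}\,\sF$; since $\sH\subseteq\sF$ and both are saturated in $T_X$, this gives $\sH=\sF$, so $\sF$ is algebraically integrable.

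I expect the genuine content to be already packaged inside the two results invoked: Proposition \ref{proposition:regular_versus_algebraically_integrable_foliation}, which delivers $K_\sH\equiv K_\sF$, and Lemma \ref{lemma:nef_abundant_restriction_leaf_foliation}, which bounds $\nu(-K_\sH)$ by $\nu(-K_F)$. The main obstacle in assembling the argument is the bookkeeping with numerical versus linear equivalence and abundance: one must verify that the hypotheses of Lemma \ref{lemma:nef_abundant_restriction_leaf_foliation} (an honest nef and abundant $P$ with $-K_\sH\equiv P$, together with $-K_F$ nef and abundant) hold even though $K_\sH$ is only known to be numerically equivalent to $K_\sF$, and that a general leaf genuinely avoids the bad locus of the chosen abundance model.
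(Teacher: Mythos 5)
Your proposal is correct and follows essentially the same route as the paper's proof: both invoke Proposition \ref{proposition:regular_versus_algebraically_integrable_foliation} to get $K_\sH\equiv K_\sF$, use ${K_\sF}_{|F}\sim K_F$ together with Lemmas \ref{lemma:nef_abudant_divisor} and \ref{lemma:nef_abundant_restriction_leaf_foliation} to bound $\nu(-K_\sH)$ by $\nu(-K_F)$, and conclude with the same chain of inequalities and the same equality-case argument (your $q\le\dim Y$ is just the paper's $\textup{rank}\,\sH\le\textup{rank}\,\sF$ rewritten).
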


\begin{proof}By assumption, $\sH$ is induced by an almost proper map $\phi\colon X \dashrightarrow Y$. By
Proposition \ref{proposition:regular_versus_algebraically_integrable_foliation}, we have
$K_\sH\equiv K_\sF$. Let $\sG$ be the foliation on $Y$ such that 
$\sF=\phi^{-1}\sG$. There exists an open set $U \supset F$ contained in $X$ such that 
$\sH_{|U}$ is a subbundle of $\sF_{|U}$, $\sG_{|U}$ is locally free,
and 
$({\sF/\sH})_{|U}\cong {\phi^*\sG}_{|U}$.
In particular, we have ${K_\sF}_{|F}\sim {K_\sH}_{|F}\sim K_F$, and hence 
$-K_F$ is nef and abundant by
Lemma \ref{lemma:nef_abudant_divisor}.
Then
$$
\begin{array}{ccll}
\kappa(X,-K_\sF) & = & \nu(-K_\sF) & \text{ since }-K_\sF \text{ is nef and abundant }\\
& = &  \nu(-K_\sH) & \text{ since } K_\sH\equiv K_\sF\\
& \le & \nu(-K_F) & \text{ by Lemma \ref{lemma:nef_abundant_restriction_leaf_foliation}} \\
& \le & \dim F = \textup{rank }\sH & \\
& \le & \textup{rank }\sF =\dim X -q  & \text{ since } \sH \subset \sF.
\end{array}
$$

Suppose that $\kappa(X,-K_\sF)=\dim X -q$. Then we must have 
$\textup{rank }\sH = \textup{rank }\sF$. Thus $\sH=\sF$, and hence $\sF$ is algebraically integrable.
This completes the proof of Theorem
\ref{thm:main3bis}.
\end{proof}

\begin{cor}\label{cor:integrability}
Let $X$ be a complex projective manifold, and let $\sF\subset T_X$ be a 
codimension $1$ foliation. Suppose that either $\sF$ is regular, or that $\sF$ has a compact leaf.
Suppose furthermore
that $-K_\sF$ is nef and $\kappa(X,-K_\sF) = \dim X-1$. Then 
$\sF$ is algebraically integrable.
\end{cor}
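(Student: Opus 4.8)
The plan is to deduce this from Theorem \ref{thm:main3bis}: the only point requiring an argument is that $-K_\sF$ is nef and \emph{abundant}, after which the equality $\kappa(X,-K_\sF)=\dim X-q$ with $q=1$ forces algebraic integrability. First I would pass to the hypotheses phrased in terms of the algebraic part: by Lemma \ref{lemma:compact_leaf}, whether $\sF$ is regular or merely has a compact leaf, its algebraic part $\sH$ has a compact leaf. This makes both Theorem \ref{thm:mainbis} and Theorem \ref{thm:main3bis} available.

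The heart of the argument is establishing abundance. Since $-K_\sF$ is nef and the algebraic part of $\sF$ has a compact leaf, Theorem \ref{thm:mainbis} shows that $-K_\sF$ is not nef and big, so $\nu(-K_\sF)\le \dim X-1$. On the other hand, the hypothesis gives $\kappa(X,-K_\sF)=\dim X-1$, and the general inequality $\kappa(X,-K_\sF)\le \nu(-K_\sF)$ of \cite[Proposition 2.2]{kawamata} then yields
$$\dim X-1=\kappa(X,-K_\sF)\le \nu(-K_\sF)\le \dim X-1,$$
whence $\kappa(X,-K_\sF)=\nu(-K_\sF)$; that is, $-K_\sF$ is nef and abundant.

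Finally, with $-K_\sF$ nef and abundant and the algebraic part of $\sF$ having a compact leaf, I would invoke Theorem \ref{thm:main3bis} with $q=1$. It gives $\kappa(X,-K_\sF)\le \dim X-1$, with equality only when $\sF$ is algebraically integrable. As we are precisely in the equality case, $\sF$ is algebraically integrable.

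The only genuine content is the abundance step: observing that the maximality of $\kappa$ together with the bound of Theorem \ref{thm:mainbis} pins down $\nu=\kappa$. Everything else is a direct citation of the results already proved, so I expect no real obstacle beyond this bookkeeping.
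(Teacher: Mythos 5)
Your proof is correct and is essentially the paper's own argument: the paper deduces abundance from Corollary \ref{cor:kodaira_codim1} (whose proof is exactly your chain $\kappa\le\nu\le\dim X-1$ via Theorem \ref{thm:mainbis} and Kawamata's inequality) and then applies Theorem \ref{thm:main3}. Your only cosmetic difference is citing the ``bis'' versions together with Lemma \ref{lemma:compact_leaf} explicitly, which is precisely how the paper's stated versions are obtained anyway.
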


\begin{proof}
Note that $-K_\sF$ is abundant by Corollary \ref{cor:kodaira_codim1}. 
The claim then follows from Theorem \ref{thm:main3}.
\end{proof}

The remainder of the present section is devoted to the proof of 
Theorem \ref{thm:main4}.

\begin{lemma}\label{lemma:smooth_morphism}
Let $X$ be a complex projective manifold, and let $\sF\subset T_X$ be a 
regular codimension $1$ foliation. 
Suppose 
that $-K_\sF$ is nef and $\kappa(X,-K_\sF) = \dim X-1$.
Then $\sF$ is induced by a smooth morphism $X \to Y$ onto a smooth complete curve $Y$ of genus
$g(Y)=h^1(X,\sO_X)$.
\end{lemma}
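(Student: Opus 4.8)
The plan is to upgrade the available algebraic integrability to an honest smooth fibration by exploiting the regularity of $\sF$, and then to extract the genus of the base from the cohomology of $X$. First I would record that $-K_\sF$ is nef and abundant: by Theorem \ref{thm:main} it is not big, so $\nu(-K_\sF)\le \dim X-1$, while $\kappa(X,-K_\sF)=\dim X-1\le \nu(-K_\sF)$; hence $\nu(-K_\sF)=\kappa(X,-K_\sF)=\dim X-1$. By Corollary \ref{cor:integrability} (equivalently Theorem \ref{thm:main3}) the foliation $\sF$ is therefore algebraically integrable, so its algebraic part is all of $\sF$. Let $Y$ be the family of leaves, with the usual diagram $\nu\colon Z\to X$ birational and $\psi\colon Z\to Y$ (see \ref{family_leaves}). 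Since the leaves have dimension $\dim X-1$ and sweep out $X$, the base $Y$ is one-dimensional; being normal and projective it is a smooth projective curve. Applying Corollary \ref{cor:codim1_abundant_max_kod_dim_1} to the almost proper map $\phi=\psi\circ\nu^{-1}\colon X\dashrightarrow Y$ (note $\nu(-K_\sF)=\dim X-\dim Y$), I obtain that the general leaf $F$ has $-K_F$ nef and big; as $F$ is smooth, being a leaf of a regular foliation, this forces $F$ to be rationally connected.

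The heart of the argument, and the step I expect to be the \textbf{main obstacle}, is to promote $\phi$ to a genuine smooth morphism $X\to Y$. Because $\sF$ is regular, the Frobenius theorem presents $\sF$ locally as the relative tangent sheaf of a submersion onto a one-dimensional transversal, so wherever $\phi$ is defined it is automatically smooth with the leaves as fibers; the only way $\phi$ can fail to be a morphism at a point is for a leaf to accumulate on itself. Here rational connectedness of the general leaf is decisive: rationally connected manifolds are simply connected, so $\sF$ has trivial holonomy along the general leaf, and by Reeb stability together with Ehresmann's fibration theorem the locus of points lying on a compact leaf with trivial holonomy is open and saturated. I would then argue that this locus is also closed, the delicate point being that a degeneration of the rationally connected leaves remains compact and rationally connected; granting this, the locus is all of $X$, every leaf is closed, the leaf space is the smooth curve $Y$, and $\phi\colon X\to Y$ is a proper smooth morphism with $\sF=T_{X/Y}$. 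After Stein factorization its fibers are connected, since the general fiber is a single leaf; equivalently, $\nu$ is an isomorphism.

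Finally I would compute the genus. Since $-K_F$ is nef and big and $F$ is smooth, Kawamata--Viehweg vanishing gives $h^1(F,\sO_F)=h^1\big(F,K_F+(-K_F)\big)=0$. As $\phi$ is smooth and proper, all of its fibers share this vanishing, so $R^1\phi_*\sO_X$ is locally free of rank $h^1(F,\sO_F)=0$ and hence vanishes, while $\phi_*\sO_X=\sO_Y$ because the fibers are connected and $Y$ is normal. The Leray spectral sequence then yields $H^1(X,\sO_X)\cong H^1(Y,\sO_Y)$, so that $h^1(X,\sO_X)=h^1(Y,\sO_Y)=g(Y)$, which is the asserted equality.
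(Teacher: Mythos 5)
Your proposal follows the paper's skeleton --- integrability via Corollary \ref{cor:integrability}, nefness and bigness of $-K_F$ for the general leaf, rational connectedness, Reeb stability, and the Kawamata--Viehweg/Leray computation of the genus at the end --- but it has a genuine gap exactly where you flag one. The closedness of the locus of points lying on a compact leaf with trivial holonomy cannot simply be ``granted''. What degenerates is the cycle $[F_y]$ in the Chow variety, and its limit is a cycle $mF_0$, possibly with multiplicity $m\ge 2$; what you need is that the \emph{reduction} $F_0$, a compact leaf which is not a limit of the leaves $F_y$ in any naive sense, is simply connected, so that its holonomy is trivial and the multiple fiber is ruled out. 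No off-the-shelf specialization statement gives you this: specialization of rational connectedness yields rational chain connectedness of the scheme-theoretic fiber, which says nothing directly about $\pi_1(F_0)$, nor does it distinguish trivial holonomy from holonomy that is merely finite of order $m$. Since this step is the actual content of the lemma, leaving it as ``granting this'' means the proof is incomplete.

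The paper circumvents the difficulty with a numerical argument that your proposal lacks. Because $\sF$ is regular and algebraically integrable of codimension one, the first integral is an honest morphism $\phi\colon X \to Y$ onto a smooth curve, so \emph{every} point of $X$ lies on a compact fiber and no limiting process is needed for compactness. Then, for an \emph{arbitrary} fiber $F_1=mF_0$, adjunction gives $K_{F_0}=(K_X+F_0)_{|F_0}$, and using $\sO_X(mF_0)_{|F_0}\cong \sO_{F_0}$, the formula $K_\sF=K_{X/Y}-R(\phi)$, and the numerical equivalence of all fibers, one gets $K_{F_0}\sim {K_\sF}_{|F_0}$ together with $(-K_{F_0})^{\dim X-1}=\frac{1}{m}\,(-K_F)^{\dim X-1}>0$. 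Hence $-K_{F_0}$ is nef and big for every fiber, so every $F_0$ is rationally connected by \cite{zhang_rcc}, hence simply connected, hence has trivial holonomy; local Reeb stability applied at each fiber then shows at once that $m=1$ and that $\phi$ is smooth. This intersection-theoretic computation is precisely what replaces your unproven degeneration claim. (Your concluding genus computation agrees with the paper's.)
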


\begin{proof}
By Corollary \ref{cor:integrability}, $\sF$ is algebraically integrable. So let $\phi\colon X \to Y$ be a first integral, with general fiber $F$. By Corollary \ref{cor:codim1_abundant_max_kod_dim}, $-K_F$ is nef and big. Let $F_1=m F_0$ be any fiber of $\phi$, where $m$ is a positive integer. By the adjunction formula, we have 
$K_{F_0}=(K_X+F_0)_{|F_0}$, and hence
\begin{multline*}
K_{F_0}^{\dim X-1}=(K_X+F_0)^{\dim X-1} \cdot F_0
=\big(K_{X/Y}-(m-1)F_0\big)^{\dim X-1} \cdot F_0 \\
=K_\sF^{\dim X-1} \cdot F_0
=\frac{1}{m}K_\sF^{\dim X-1} \cdot F_1
=\frac{1}{m}K_\sF^{\dim X-1} \cdot F
=\frac{1}{m}K_{F}^{\dim X-1}>0.
\end{multline*}
Moreover, $K_{F_0}=(K_X+F_0)_{|F_0}\sim \big(K_X-(m-1)F_0\big)_{|F_0}\sim {K_\sF}_{|F_0}$
is nef by assumption.
In particular, $\pi_1(F_0)=\{1\}$ (see \cite{zhang_rcc}). 
By the holomorphic version of Reeb stability theorem, we conclude that $\phi$ is a smooth morphism.

By the Kawamata-Viehweg vanishing theorem, we have $R^1\phi_*\sO_X=0$. The Leray spectral sequence then yields 
$h^1(X,\sO_X)=h^1(Y,\sO_Y)$. This completes the proof of the lemma.
\end{proof}

\begin{proof}[Proof of Theorem \ref{thm:main4}]
Let $X$ be a complex projective manifold with $h^1(X,\sO_X)=0$, and let $\sF\subset T_X$ be a 
codimension $1$ regular foliation. 
Suppose 
that $-K_\sF$ is nef and $\kappa(X,-K_\sF) = \dim X-1$.

By Lemma \ref{lemma:smooth_morphism}, $\sF$ is induced by a smooth morphism 
$\phi\colon X\to \mathbb{P}^1$.
In particular, $K_\sF=K_{X/\mathbb{P}^1}$.
Now, observe that $-K_X = -K_{X/\mathbb{P}^1}-\phi^*K_{\mathbb{P}^1}$
is nef, and that
$$(-K_X)^{\dim X}=(-K_{X/\mathbb{P}^1}-\phi^*K_{\mathbb{P}^1})^{\dim X}=2\dim X (-K_F)^{\dim F} > 0.$$
In other words, $X$ is a weak Fano manifold. 
Let $F$ be a general fiber of $\phi$. Let
$R=\mathbb{R}_{\ge 0}[\ell] \subset \textup{NE}(X)$
be an extremal ray with $F \cdot \ell>0$, and let $\psi_R\colon X \to Z$ be the corresponding contraction.
Note that any fiber of $\psi_R$ has dimension $\le 1$. Thus, $Z$ is smooth and either 
$\psi_R\colon X \to Z$ is the blow up of a codimension $2$ subvariety, or $\psi_R\colon X\to Z $ is a conic bundle by 
\cite[Theorem 1.2]{wisn_crelle}.

Suppose first that $\psi_R\colon X \to Z$ is the blow up of a codimension $2$ subvariety. Then, up to replacing $\ell$ with a numerically equivalent curve on $X$, we may assume that $-K_X\cdot \ell =1$. On the other hand,
$$-K_X \cdot \ell = (-K_{X/\mathbb{P}^1}-\phi^*K_{\mathbb{P}^1})\cdot \ell \ge 2 F \cdot \ell \ge 2,$$ yielding a contradiction. This proves that $\psi_R\colon X \to Z$ is a conic bundle. Arguing as above, we conclude that
$\psi_R\colon X \to Z$ is a smooth conic bundle, and that $F \cdot \ell =1$.
This implies that the morphism $\phi \times \psi_R \colon X \to \mathbb{P}^1 \times Z$ is birational. On the 
other hand, it is finite, and hence $X \cong \mathbb{P}^1 \times F$, proving the theorem. 
\end{proof}

\begin{question}
Given a smooth morphism $X \to Y$ onto a smooth complete curve $Y$ with $g(Y)\ge 1$ such that 
$-K_{X/Y}$ is nef and $\kappa(X,-K_{X/Y})=\dim X-1$, do we have $X \cong Y\times F$?
\end{question}


\providecommand{\bysame}{\leavevmode\hbox to3em{\hrulefill}\thinspace}
\providecommand{\MR}{\relax\ifhmode\unskip\space\fi MR }
\providecommand{\MRhref}[2]{%
  \href{http://www.ams.org/mathscinet-getitem?mr=#1}{#2}
}
\providecommand{\href}[2]{#2}

\end{document}